\newcommand{\dd}{\,{\rm d}}
\newcommand{\dual}[1]{\left\langle {#1} \right\rangle}
\newtheorem{example}[theorem]{Example}
\newtheorem{assumption}{Assumption}
\newcommand{\fb}{f_{-\mu}}
\definecolor{atomictangerine}{rgb}{1.0, 0.6, 0.4}
\crefname{hypothesis}{Hypothesis}{Hypotheses}
\crefname{fact}{Fact}{Facts}
\title{Accelerated Mirror Descent Method through Variable and Operator Splitting\thanks{Submitted to the editors DATE.
\funding{}}}
\author{
Long Chen \thanks{Department of Mathematics, University of California, Irvine, CA 92697, USA. 
 (\email{chenlong@math.uci.edu}).}
\and Hao Luo \thanks{National Center for Applied Mathematics in Chongqing, Chongqing Normal University, Chongqing, 401331, China; Chongqing Research Institute of 
 Big Data, Peking University, Chongqing, 401121, China (\email{luohao@cqnu.edu.cn})}
\and Jingrong Wei \thanks{Department of Mathematics, The Chinese University of Hong Kong, Shatin, NT, Hong Kong SAR, China. (\email{jingrongwei@cuhk.edu.hk}).} 
\and Zeyi Xu \thanks{Department of Mathematics, University of California, Irvine, CA 92697, USA. 
 (\email{zeyix1@uci.edu}).}
 \and Yuan Yao \thanks{Department of Mathematics, Hong Kong University of Science and Technology, Clear Water Bay, Kowloon, Hong Kong SAR, China (yuany@ust.hk)}
 }
\begin{document}

\maketitle

\begin{abstract}
Mirror descent uses the mirror function to encode geometry and constraints, improving convergence while preserving feasibility. Accelerated Mirror Descent Methods (Acc-MD) are derived from a discretization of an accelerated mirror ODE system using a variable--operator splitting framework. A geometric assumption, termed the Generalized Cauchy-Schwarz (GCS) condition, is introduced to quantify the compatibility between the objective and the mirror geometry, under which the first accelerated linear convergence for Acc-MD on a broad class of problems is established. Numerical experiments on smooth and composite optimization tasks demonstrate that Acc-MD exhibits superior stability and competitive acceleration compared to existing variants.
\end{abstract}

\begin{keywords}
Accelerated Mirror Descent, Convex Optimization, Convergence Rate, Lyapunov Analysis
\end{keywords}

\begin{MSCcodes}
90C25, 65K10, 37N40

\end{MSCcodes}

\section{Introduction}
Consider the unconstrained convex optimization problem
$$
\min_{x \in \mathbb{R}^n} f(x),
$$
where $f$ is convex and differentiable. Mirror descent~\cite{nemirovskij1983problem} updates as
\begin{equation}\label{eq:MD}
    x_{k+1} = \arg\min_{x \in \mathbb{R}^n} \left\{
    \langle \nabla f(x_k), x \rangle + \frac{1}{\alpha_k} D_\phi(x, x_k)
    \right\},
\end{equation}
where $\alpha_k > 0$ is the step size, $\phi$ is a smooth, strictly convex {\it mirror function} of Legendre type, and $D_\phi(\cdot, \cdot)$ is the associated Bregman divergence. In \cite{nesterov2005smooth}, Nesterov proposed an accelerated version that later inspired the mirror--prox method \cite{nemirovski2004prox}. 

The mirror function encodes both geometry and constraints, with applications in imaging, phase transition, signal processing, and machine learning~\cite{Osher2016,ben2001ordered,Yao23,liang2024differentially,bao2024convergence,lan2023policy}.
One use of mirror descent is as a nonlinear preconditioner. Relative smoothness is defined by $D_f(x,y)\le L D_\phi(x,y)$. The Lipschitz constant of $\nabla f$ is $L_f$. Mirror descent improves convergence when $L\ll L_f$, especially if $L_f=\infty$ (i.e., when $f$ is not smooth in the Euclidean sense).

Another use of mirror descent is to impose constraints through the mirror function. For example, the entropic mirror descent method uses Shannon’s entropy $\phi(x)=\sum_i x_i\log x_i$, which ensures that $x_i$ stay in the unit simplex. In~\cite{beck_mirror_2003}, mirror descent is seen as a nonlinear projected subgradient-type method for constrained optimization.

While acceleration is well-understood in Euclidean spaces, extending it to general Bregman geometries remains challenging due to the lack of a natural coupling between the primal and dual metrics. Recently, acceleration has been studied by viewing optimization algorithms as discretizations of continuous-time ODE dynamics. Krichene, Bayen, and Bartlett~\cite{krichene2015accelerated} extended the ODE framework for Nesterov acceleration~\cite{Su;Boyd;Candes:2016differential} to accelerated mirror descent, showing that suitable discretizations yield a $\mathcal{O}(L_f/k^2)$ rate. Wibisono, Wilson, and Jordan~\cite{wibisono2016variational} derived a Bregman Lagrangian that captures accelerated flows. Yuan and Zhang~\cite{yuan2023analysis} developed high-resolution ODEs and recovered the optimal $\mathcal{O}(L_f/k^2)$ rate for accelerated mirror descent methods in~\cite{nesterov2005smooth}. Also starting from Nesterov acceleration, Yu et al.~\cite{yu2025mirrordescentgeneralizedsmoothness} extended the method to work under generalized smoothness~\cite{li2023convex} and established the $O(1/k^2)$ rate. Other variants are in~\cite{Allen-Zhu2016,d2021acceleration}.

However, these analyses~\cite{krichene2015accelerated, nesterov2005smooth,wibisono2016variational,yu2025mirrordescentgeneralizedsmoothness,yuan2023analysis,Allen-Zhu2016,d2021acceleration} do not use relative smoothness, $D_f(x,y) \le L D_\phi(x,y)$. Thus, convergence guarantees can be loose outside the Euclidean setting, especially when $L$ is small but $L_f$ is large or infinite. Birnbaum, Devanur, and Xiao~\cite{BirnbaDevanuXiao2011Distributed} showed that mirror descent achieves a $\mathcal{O}(L/k)$ rate under relative smoothness. For composite problems $F(x)=f(x)+g(x)$, where $g$ is convex but possibly non-smooth, the Bregman Proximal Gradient (BPG) method~\cite{Teboul2018simplified} achieves a $\mathcal{O}(L/k)$ rate; see also~\cite{bauschke2017descent,lu2018relatively,ZhouLiangShen2019simple}. In~\cite{lu2018relatively}, Lu, Freund, and Nesterov introduced relative strong convexity, $\mu D_\phi(x,y) \le D_f(x,y)$, and proved a linear convergence rate of $1-\mu/L$ for mirror descent. Later, Hanzely and  Richt{\'a}rik \cite{hanzely2021fastest} extended the linear convergence to stochastic mirror descent .

Without extra structure, mirror descent methods are limited to $\mathcal{O}(1/k)$ complexity~\cite{DragomTaylordAspreBolte2022Optimal}. Extra conditions are needed for acceleration under relative smoothness. Under the gradient dominated condition and  the $\theta$-uniform condition, the Bregman gradient method achieved linear convergence  \cite{bauschke2019linear}. The Accelerated Bregman Proximal Gradient (ABPG) method~\cite{Hanzely2021} obtains a rate $\mathcal{O}(L/k^{\gamma})$ based on the triangle scaling exponent (TSE) $\gamma \le 2$. Full acceleration $\mathcal{O}(L/k^2)$ requires $\gamma=2$. Notably, no accelerated linear rate is known under relative strong convexity.

Table~\ref{tab: MD conv result} summarizes recent convergence results for mirror descent and accelerated mirror descent methods.

\begin{table}[htp]
\centering
\caption{Convergence rates of mirror descent and accelerated mirror descent methods. 
The mirror function $\phi$ is assumed strongly convex and differentiable of Legendre type.
}
\resizebox{\linewidth}{!}{
\renewcommand{\arraystretch}{1.275}
\begin{tabular}{@{} c  c  c @{}}
\toprule
Algorithm / Theory 
& Assumptions 
& Convergence rate \\ 
\midrule

\makecell{Nesterov (2005)~\cite{nesterov2005smooth} \\
Nemirovski (2005)~\cite{nemirovski2004prox}}
& $f$: $L_f$-smooth
& $\mathcal{O}(1/k)$ \\
\midrule

\makecell{Krichene et. al. (2015)~\cite{krichene2015accelerated} \\
Yuan and Zhang (2024)~\cite{yuan2023analysis}}
& $f$: $L_f$-smooth
& $\mathcal{O}(L_f/k^2)$ \\
\midrule

Birnbaum et. al. (2011)~\cite{BirnbaDevanuXiao2011Distributed}
& $f$: $L$-relative smooth
& $\mathcal{O}(L/k)$ \\
\midrule


Hanzely et. al. (2021)~\cite{Hanzely2021}
& \makecell{triangle scaling exponent $\gamma \le 2$ \\[1pt]
$f$: $L$-relative smooth}
& $\mathcal{O}(L/k^\gamma)$ \\
\midrule

\makecell{Lu et. al. (2018)~\cite{lu2018relatively} \\
Teboulle (2018)~\cite{Teboul2018simplified}}
& \makecell{$f$: $\mu$-relative convex, $L$-relative smooth}
& $\mathcal{O}\!\big((1-\mu/L)^k\big)$ \\
\midrule

\makecell{(\textbf{New}) Algorithm~\ref{alg:AccMD}, \ref{alg:AccMDback} \\ Theorem~\ref{thm:convergence rate of Acc-MD dual}}
& \makecell{$f$: $\mu$-relative convex, $L$-relative smooth \\ generalized Cauchy-Schwarz \eqref{eq: A2}}
& \makecell{$\mathcal{O}\!\big((1+\sqrt{\mu/C_{f,\phi}})^{-k}\big)$} \\
\midrule

\makecell{(\textbf{New}) Algorithm~\ref{alg: restart perturbed VOS} \\ Theorem~\ref{thm:convergence rate of perturbed Acc-MD dual}}
& \makecell{$f$: $L$-relative smooth,\\
 generalized Cauchy-Schwarz \eqref{eq: A2}}
& $\mathcal{O}(C_{f,\phi}/k^2)$ \\
\bottomrule
\end{tabular}}
\label{tab: MD conv result}
\end{table}

In this work, we develop accelerated mirror descent methods. Using the variable and operator splitting (VOS) framework from~\cite{chen2025accelerated}, we propose a new accelerated mirror descent (Acc-MD) flow:
\begin{equation}\label{eq:VOSflow}  
\left\{
\begin{aligned}
    x^{\prime} &= y - x, \\
    \left(\nabla \phi(y)\right)^{\prime} &= - \mu^{-1} \nabla f(x) + \nabla \phi(x) - \nabla \phi(y),
\end{aligned}
\right.
\end{equation}
with $x(0)=x_0$ and $y(0)=y_0$. The variable is split into $x$ and $y$. At equilibrium, $y^{\star}=x^{\star}$ and $\nabla f(x^{\star})=0$. The variable $y$ allows operator splitting $\nabla f(x)$ and $\nabla \phi(y)$.

From discretizations of~\eqref{eq:VOSflow}, we design Acc-MD methods: \cref{alg:AccMD} and \cref{alg:AccMDback}. We introduce a new assumption \eqref{eq: A2}, a generalized Cauchy-Schwarz (GCS) inequality with constant $C_{f,\phi}$. Based on GCS, We establish the first accelerated linear convergence result for mirror descent, with rate $(1+\sqrt{\mu/C_{f,\phi}})^{-1}$. We show cases where GCS \eqref{eq: A2} holds and $C_{f,\phi}$ is explicit. 

As seen in Figure \ref{fig:diagram} (Section \ref{sec:A2}), GCS covers important geometries, such as Example \ref{ex: log-linear}, where ABPG~\cite{Hanzely2021} based on TSE fails to accelerate. We do not claim GCS is more general than TSE, or that Acc-MD is always better than ABPG. Instead, they address {\it different structure regimes}. See Section~\ref{sec:A2} for discussion.

For the convex case $\mu=0$, we use a perturbation-homotopy argument \cite{chen2025accelerated} to derive the optimal accelerated rate $\mathcal{O}(C_{f,\phi}/k^{2})$ under GCS. For composite problems $\min_x f(x)+g(x)$, our method leads to a Bregman proximal gradient variant that handles non-smooth $g$ using variable splitting in the gradient term.

We test the Acc-MD methods on smooth, non-smooth, and constrained convex problems. Results show benefits over existing mirror descent methods.

The paper is organized as follows. Section \ref{sec: Preliminaries} gives preliminaries. Section \ref{sec: mirror descent} reviews mirror descent and gives a proof for linear convergence in relative strongly convex cases. Section \ref{sec: acc-md} introduces the Acc-MD flow and proves accelerated linear convergence under GCS. Section \ref{sec: extension to convex opt} and Section \ref{sec: extension to composite opt} extend the methods to convex and composite cases. Section \ref{sec: numerical} reports numerical results.

\section{Preliminaries}\label{sec: Preliminaries}
In this section, we present useful results in convex optimization and duality theory; see standard textbooks, e.g., \cite{bertsekas2009convex,rockafellar1970convex}, for details. Let $V$ be a normed vector space and $V^*$ be its dual space. The duality pairing between $V^{\ast}$ and $V$ is denoted by $\langle \chi, x\rangle$ for $\chi \in V^{\ast}$ and $x \in V$. When $V$ is a Hilbert space, the Riesz representation theorem allows us to identify $V$ with $V^*$, in which case the duality pairing coincides with the inner product $(\cdot, \cdot)$.

For a continuously differentiable function $f \in \mathcal{C}^1(V)$, the {\it Bregman divergence} is
\begin{equation*}
	D_f(y, x) := f(y) - f(x) - \langle \nabla f(x), y - x \rangle. 
\end{equation*}
For $f \in \mathcal{C}^1(V)$, $f$ is convex if and only if $D_f(y, x) \geq 0$ for all $x,y\in V$. If $f$ is strictly convex, then $D_f(y, x) = 0$ if and only if $x = y$. In general, the Bregman divergence is not symmetric. Its symmetrization is 
$$
D_f(y,x) + D_f(x,y) = \langle \nabla f(y) - \nabla f(x), y - x \rangle.
$$

A key tool in the convergence analysis is the three-point identity of Bregman divergence~\cite{chen1993convergence}:
\begin{equation}\label{eq:Bregmanidentity}
	\langle \nabla f(y) - \nabla f(x), y - z \rangle = D_f(y, x) + D_f(z, y) - D_f(z, x).
\end{equation}

We fix a smooth, strictly convex function of Legendre type $\phi \in \mathcal{C}^1(V)$, known as the {\it mirror function}. Let $\phi^*$ be the convex conjugate of $\phi$. The mappings between primal and dual variables are
$$
\chi = \nabla \phi(x) \Leftrightarrow x = \nabla \phi^*(\chi), \qquad \eta = \nabla \phi(y) \Leftrightarrow y = \nabla \phi^*(\eta),
$$
where $(x, y)$ are primal variables and $(\chi, \eta)$ are dual variables. The maps $\nabla \phi: V \to V^*$ and $\nabla \phi^*: V^* \to V$ are assumed to be one-to-one and easy to compute. We refer to $\phi$ as the mirror function and $\nabla \phi$ as the mirror map.

A symmetry relation connects the Bregman divergences of $\phi$ and $\phi^*$:
\begin{equation}\label{eq:DfDf*}
	D_\phi(x, y) = D_{\phi^*}(\eta, \chi),
\end{equation}
with reversed arguments. Moreover, the gradient of the Bregman divergence with respect to the first variable satisfies
\begin{equation}\label{eq:gradD}
	\nabla D_f(\cdot, x) = \nabla f(\cdot) - \nabla f(x), \qquad 
	\nabla D_{\phi^*}(\cdot, \chi) = \nabla \phi^*(\cdot) - \nabla \phi^*(\chi).
\end{equation}

Let $A$ be a self-adjoint, positive definite operator on a Hilbert space $V$ with inner product $(\cdot, \cdot)$. Then $(x, y)_A := (Ax, y)$ defines a new inner product with norm $\|\cdot\|_A$. The dual norm is $\|\cdot\|_{A^{-1}}$. The convexity and smoothness constants of a differentiable function $f$ relative to $\|\cdot\|_A$ satisfy
\begin{equation*}
	\mu_f(A)\, \|x - y\|_A^2 \leq \langle \nabla f(x) - \nabla f(y), x - y \rangle \leq L_f(A)\, \|x - y\|_A^2 \quad \forall\, x, y \in V.
\end{equation*}
When $A$ is the identity, $L_f$ and $\mu_f$ are standard constants in the Euclidean norm. A proper choice of $A$ may reduce the condition number $\kappa_A(f) := L_f(A) / \mu_f(A)$; in this case $A$ is a {\it preconditioner}. Similarly, the mirror function $\phi$ acts as a {\it nonlinear preconditioner}. This is useful when $L_f$ is large but the relative smoothness $L$ is small.

For two symmetric operators $D$ and $A$, we write $A < D$ if $D-A$ is positive definite and $A \leq D$ if $D-A$ is positive semi-definite. Note that $A \leq D$ is equivalent to $\lambda_{\max}(D^{-1}A) \leq 1$ for non-singular symmetric operators.
\section{Mirror Descent Methods}\label{sec: mirror descent}
Given an initial condition $x(0) = x_0$, the mirror descent flow is defined by the evolution of the mirror map:
\begin{equation}\label{eq:mdflow}
	\frac{\dd }{\dd t}\nabla\phi(x(t)) = -\nabla f(x(t)).
\end{equation}
The flow \eqref{eq:mdflow} can be expressed in the dual form \cite{krichene2015accelerated}:
\begin{equation}\label{eq:dualmd}
    \chi' = -\nabla f(x),
\end{equation}
where $\chi = \nabla \phi(x)$ represents the dual variable and $x = \nabla \phi^*(\chi)$. This formulation is closely related to the Bregman Inverse Scale Space dynamics studied in \cite{Osher2016}.

Discretizing \eqref{eq:mdflow} or \eqref{eq:dualmd} using the explicit Euler method with step size $\alpha_k > 0$ yields the iteration
\begin{equation}\label{eq:md}
	\nabla\phi(x_{k+1}) - \nabla \phi(x_k) = -\alpha_k \nabla f(x_k).
\end{equation}
By the definition of the Bregman divergence, this update is equivalent to the standard mirror descent step:
\begin{equation*}
	x_{k+1} = \arg\min_{x \in V} \left\{
	\langle \nabla f(x_k), x \rangle + \frac{1}{\alpha_k} D_\phi(x, x_k)
	\right\}.
\end{equation*}

The following three-point identity simplifies the convergence analysis by connecting the geometries of the mirror function $\phi$ and the objective $f$.

\begin{lemma}
Let $\{x_k\}$ be the sequence generated by the mirror descent method \eqref{eq:md}. Then
\begin{equation}\label{eq:mdidentity}
\begin{aligned}
&	D_\phi(x^{\star}, x_{k+1}) - D_\phi(x^{\star}, x_k) + D_\phi(x_{k+1}, x_k) \\
	={}& \alpha_k \Big[ - D_f(x_{k+1}, x^{\star}) - D_f(x^{\star}, x_k) + D_f(x_{k+1}, x_k) \Big].
\end{aligned}
\end{equation}
\end{lemma}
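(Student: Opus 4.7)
The plan is to take the mirror descent update $\nabla\phi(x_{k+1}) - \nabla\phi(x_k) = -\alpha_k \nabla f(x_k)$ and test it against the direction $x_{k+1} - x^{\star}$. The left-hand side will produce the $D_\phi$ terms in \eqref{eq:mdidentity} via the three-point identity, while the right-hand side becomes the inner product $-\alpha_k\langle \nabla f(x_k), x_{k+1} - x^{\star}\rangle$, which must then be rewritten in terms of Bregman divergences of $f$.

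First I would apply the three-point identity \eqref{eq:Bregmanidentity} for $\phi$ with $y = x_{k+1}$, $x = x_k$, $z = x^{\star}$, giving
\begin{equation*}
\langle \nabla\phi(x_{k+1}) - \nabla\phi(x_k), x_{k+1} - x^{\star}\rangle = D_\phi(x_{k+1}, x_k) + D_\phi(x^{\star}, x_{k+1}) - D_\phi(x^{\star}, x_k),
\end{equation*}
which is precisely the expression on the left-hand side of \eqref{eq:mdidentity}. Substituting the mirror descent update on the left converts this quantity to $-\alpha_k \langle \nabla f(x_k), x_{k+1} - x^{\star}\rangle$.

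Next I would decompose $x_{k+1} - x^{\star} = (x_{k+1} - x_k) + (x_k - x^{\star})$ and expand each piece using only the definition of the Bregman divergence: $\langle \nabla f(x_k), x_{k+1} - x_k\rangle = f(x_{k+1}) - f(x_k) - D_f(x_{k+1}, x_k)$, and $\langle \nabla f(x_k), x_k - x^{\star}\rangle = f(x_k) - f(x^{\star}) + D_f(x^{\star}, x_k)$. Summing, the $\pm f(x_k)$ terms cancel and we are left with $f(x_{k+1}) - f(x^{\star})$ plus the two divergence terms.

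The final step, and the one worth flagging, is the conversion $f(x_{k+1}) - f(x^{\star}) = D_f(x_{k+1}, x^{\star})$, which uses the first-order optimality condition $\nabla f(x^{\star}) = 0$ at the unconstrained minimizer. After multiplying by $-\alpha_k$, this yields exactly $\alpha_k[-D_f(x_{k+1}, x^{\star}) - D_f(x^{\star}, x_k) + D_f(x_{k+1}, x_k)]$, matching the right-hand side of \eqref{eq:mdidentity}. There is no real obstacle beyond bookkeeping the signs and argument orderings of the Bregman divergences; the only substantive input beyond the three-point identity is the stationarity of $x^{\star}$.
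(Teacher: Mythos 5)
Your proposal is correct and follows essentially the same route as the paper: apply the three-point identity for $\phi$ against the direction $x_{k+1}-x^{\star}$, substitute the update, and use $\nabla f(x^{\star})=0$ to convert the remaining inner product into Bregman divergences of $f$. The only cosmetic difference is that you expand the $f$-terms from the definition of $D_f$ rather than citing the three-point identity for $f$, which amounts to the same computation.
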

\begin{proof}
Using the three-point identity \eqref{eq:Bregmanidentity}, the optimality condition $\nabla f(x^\star) = 0$, and the update \eqref{eq:md}, we have:
\begin{align*}
	&D_\phi(x^{\star}, x_{k+1}) - D_\phi(x^{\star}, x_k) + D_\phi(x_{k+1}, x_k) \\
={}& \langle \nabla \phi(x_{k+1}) - \nabla \phi(x_k), x_{k+1} - x^{\star} \rangle  \\
	={}& -\alpha_k \langle \nabla f(x_k) - \nabla f(x^{\star}), x_{k+1} - x^{\star} \rangle\\ ={}&\alpha_k \left[ - D_f(x_{k+1}, x^{\star}) - D_f(x^{\star}, x_k) + D_f(x_{k+1}, x_k) \right].
\end{align*}
\end{proof}

To establish linear convergence without assuming Euclidean Lipschitz continuity of $\nabla f$, we utilize the notions of relative smoothness and relative strong convexity \cite{lu2018relatively}.

\begin{assumption}\label{assump: A1}
 The function $f$ is {\it $L$-relatively smooth} and {\it $\mu$-relatively convex} with respect to $\phi$ if there exist $\mu \geq 0$ and $L \geq \mu$ such that
\begin{equation}\tag{A1}\label{eq: A1}
	\mu D_\phi(x, y) \leq D_f(x, y) \leq L D_\phi(x, y) \quad \forall\, x, y \in V.
\end{equation}
\end{assumption}

Equivalently, \cref{assump: A1} holds if and only if $L\phi - f$ and $f - \mu \phi$ are convex. 

\begin{theorem}
Suppose $f$ satisfies \cref{assump: A1} with $L \geq \mu > 0$. Let $\{x_k\}$ be the sequence from \eqref{eq:md}. For any step size $\alpha_k \leq 1/L$, the Bregman divergence satisfies the following decay:
\begin{equation}
 D_\phi(x^{\star}, x_k) - D_\phi(x^{\star}, x_{k+1}) \geq \alpha_k \big[ D_f(x_{k+1}, x^{\star}) + D_f(x^{\star}, x_k)\big]. 
\end{equation}
Specifically, if $\alpha_k = 1/L$, mirror descent achieves a linear convergence rate:
\begin{equation}\label{eq:mdrate}
	D_\phi(x^{\star}, x_k) \leq \left(1 - \frac{\mu}{L}\right)^k D_\phi(x^{\star}, x_0).
\end{equation}
\end{theorem}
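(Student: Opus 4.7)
The plan is to start from the three-point identity \eqref{eq:mdidentity} that was just established, and rearrange it into
\begin{equation*}
D_\phi(x^{\star}, x_k) - D_\phi(x^{\star}, x_{k+1}) = \bigl[D_\phi(x_{k+1}, x_k) - \alpha_k D_f(x_{k+1}, x_k)\bigr] + \alpha_k\bigl[D_f(x_{k+1}, x^{\star}) + D_f(x^{\star}, x_k)\bigr].
\end{equation*}
The first bracket should be non-negative, and the second bracket is exactly what appears on the right of the claimed descent inequality. So the first step is simply to verify that the bracketed ``error term'' $D_\phi(x_{k+1}, x_k) - \alpha_k D_f(x_{k+1}, x_k) \ge 0$. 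This is where the relative smoothness half of \eqref{eq: A1} enters: $D_f(x_{k+1}, x_k) \le L\, D_\phi(x_{k+1}, x_k)$, combined with $\alpha_k \le 1/L$, yields $\alpha_k D_f(x_{k+1}, x_k) \le D_\phi(x_{k+1}, x_k)$. This proves the descent inequality.

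For the linear rate, I would take $\alpha_k = 1/L$ and discard the non-negative term $D_f(x_{k+1}, x^{\star}) \ge 0$ on the right-hand side (convexity of $f$), leaving
\begin{equation*}
D_\phi(x^{\star}, x_k) - D_\phi(x^{\star}, x_{k+1}) \ge \tfrac{1}{L} D_f(x^{\star}, x_k).
\end{equation*}
Now the relative strong convexity half of \eqref{eq: A1} gives $D_f(x^{\star}, x_k) \ge \mu\, D_\phi(x^{\star}, x_k)$, so
\begin{equation*}
D_\phi(x^{\star}, x_{k+1}) \le \bigl(1 - \tfrac{\mu}{L}\bigr) D_\phi(x^{\star}, x_k),
\end{equation*}
and a straightforward induction on $k$ delivers \eqref{eq:mdrate}.

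Conceptually there is no real obstacle here once the three-point identity \eqref{eq:mdidentity} is in hand; the argument is a direct consequence of the two-sided bound \eqref{eq: A1}. The only point worth care is ensuring the sign of the ``error term'' is correctly tracked when rearranging \eqref{eq:mdidentity} --- this is where the step-size restriction $\alpha_k \le 1/L$ is used, and it is the natural analogue in the relative geometry of the usual $\alpha_k \le 1/L_f$ condition in the Euclidean setting. Note also that the proof only uses convexity of $f$ at $x^{\star}$ (to drop the $D_f(x_{k+1}, x^{\star})$ term), so one could in principle get a slightly sharper estimate by keeping that term, but it is not needed for the stated rate.
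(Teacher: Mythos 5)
Your proposal is correct and follows essentially the same route as the paper: rearrange the three-point identity \eqref{eq:mdidentity}, absorb the error term $\alpha_k D_f(x_{k+1},x_k) - D_\phi(x_{k+1},x_k) \le 0$ via relative smoothness and $\alpha_k \le 1/L$, then drop $D_f(x_{k+1},x^{\star}) \ge 0$ and apply relative strong convexity to $D_f(x^{\star},x_k)$ before iterating. No gaps.
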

\begin{proof}
Combining \eqref{eq:mdidentity} with the upper bounds in \eqref{eq: A1}, we obtain:
\begin{align*}
&	D_\phi(x^{\star}, x_{k+1}) - D_\phi(x^{\star}, x_k) \\
	=& -\alpha_k \big [ D_f(x_{k+1}, x^{\star}) + D_f(x^{\star}, x_k)\big ]  + \left [\alpha_k D_f (x_{k+1}, x_k) - D_\phi(x_{k+1}, x_k)\right ] \\
	\leq &-\alpha_k \big [ D_f(x_{k+1}, x^{\star}) + D_f(x^{\star}, x_k)\big ].
\end{align*}
Applying the relative strong convexity $D_f(x^\star, x_k) \ge \mu D_\phi(x^\star, x_k)$ and setting $\alpha_k = 1/L$ yields:
$$D_\phi(x^{\star}, x_{k+1}) \le (1 - \mu/L) D_\phi(x^{\star}, x_k).$$
Iterating the inequality completes the proof.
\end{proof}

\section{Accelerated Mirror Descent Methods}\label{sec: acc-md}
We use the variable and operator splitting (VOS) framework~\cite{chen2025accelerated} to develop accelerated mirror descent methods. Throughout this section, we assume $f$ satisfies \cref{assump: A1} with $L \geq \mu > 0$.

\subsection{Flow and stability}
By introducing dual variables $\chi = \nabla \phi(x)$ and $\eta = \nabla \phi(y)$, we write the accelerated mirror descent flow~\eqref{eq:VOSflow} as
\begin{equation}\label{eq:VOSdualflow}
x' = y - x, \quad \eta' = -\mu^{-1} \nabla \fb(x) - \eta,
\end{equation}
where the $\mu$-shifted function is defined as
$$
\fb := f - \mu \phi.
$$
Under \cref{assump: A1}, $\fb$ is convex. We define a Lyapunov function:
\begin{equation} \label{eq:lya_func}
    \mathcal{E}(x, \eta) := D_{\fb}(x, x^{\star}) + \mu D_{\phi^*}(\eta, \chi^{\star}),
\end{equation}
where $x^{\star}$ is a minimizer of $f$ and $\chi^{\star} = \nabla \phi(x^{\star})$. This energy couples primal and dual Bregman divergences to capture the geometry of the mirror map. We show exponential stability by verifying a strong Lyapunov condition~\cite{chen2021unified}.

\paragraph{Strong Lyapunov condition for linear cases}
We illustrate the flow with a linear system $Ax^*=b$. Let $A$ and $B$ be two symmetric positive definite (SPD) matrices satisfying $A \ge \mu B^{-1}$. Define
$$
\nabla \fb(x) = A_{-\mu}x - b := (A - \mu B^{-1})x - b, \quad \nabla \phi(x) = B^{-1}x.
$$
The flow simplifies to
\begin{equation}\label{eq:linear}
x' = y - x, \qquad y' = -\mu^{-1} B A_{-\mu} x - y = -\mu^{-1} B (A x - b) + x - y.
\end{equation}

Let $\boldsymbol z = (x, y)^{\top}$ and introduce the matrices
$$
\mathcal D = \begin{pmatrix} A_{-\mu} & 0 \\ 0 & \mu B^{-1} \end{pmatrix}, \quad 
\mathcal G = \begin{pmatrix} -I & I \\ -\mu^{-1} B A_{-\mu} & -I \end{pmatrix}.
$$
The product $\mathcal D\mathcal G$ is
$$
\mathcal D\mathcal G = \begin{pmatrix} -A_{-\mu} & A_{-\mu} \\ -A_{-\mu} & -\mu B^{-1} \end{pmatrix}.
$$
We write the Lyapunov function and the flow as
$$
\mathcal E(\boldsymbol z) = \frac{1}{2}\|\boldsymbol z - \boldsymbol z^\star\|_{\mathcal D}^2, \quad \boldsymbol z' = \mathcal G(\boldsymbol z - \boldsymbol z^\star),
$$
where $\boldsymbol z^\star = (x^{\star}, x^{\star})^{\top}$ is the equilibrium point. Then
$$
\nabla \mathcal E(\boldsymbol z) \cdot \mathcal G(\boldsymbol z) = (\boldsymbol z - \boldsymbol z^\star)^{\top} \mathcal D \mathcal G (\boldsymbol z - \boldsymbol z^\star).
$$
Since $x^{\top} M x = \frac{1}{2} x^{\top}(M + M^{\top})x$, we have
$$
\frac{1}{2} \bigl(\mathcal D \mathcal G + (\mathcal D \mathcal G)^{\top}\bigr) = -\mathcal D \quad \Longrightarrow \quad -\nabla \mathcal E(\boldsymbol z) \cdot \mathcal G(\boldsymbol z) = 2 \mathcal E(\boldsymbol z).
$$

\paragraph{Strong Lyapunov condition of nonlinear cases}
We now consider the nonlinear case. Due to the non-symmetry of the Bregman divergence, the factor $2$ becomes $1$. 

\begin{lemma}
Suppose $f$ is $\mu$-relative convex with respect to $\phi$ with $\mu > 0$. Let $\mathcal{E}(x, \eta)$ be defined by~\eqref{eq:lya_func}, and define the vector field $\mathcal{G}(x, \eta) = (y - x,\, -\mu^{-1} \nabla \fb(x) - \eta)$, where the dual variables satisfy $\chi = \nabla \phi(x)$ and $\eta = \nabla \phi(y)$. Then the following strong Lyapunov property holds:
\begin{equation}\label{eq:stLy}
    -\nabla \mathcal{E}(x, \eta) \cdot \mathcal{G}(x, \eta)
    = \mathcal{E}(x, \eta) + D_{\fb}(x^{\star}, x) + \mu D_{\phi}(y, x^{\star}).
\end{equation}
Consequently, any solution $(x(t), y(t))$ of the flow~\eqref{eq:VOSflow} satisfies the exponential decay bound
\begin{equation}\label{eq:stability}
    \mathcal{E}(x(t), \eta(t)) \leq e^{-t} \mathcal{E}(x(0), \eta(0)).
\end{equation}
\end{lemma}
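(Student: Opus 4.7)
The identity \eqref{eq:stLy} is a direct computation: differentiate $\mathcal{E}$ along the flow and bundle the resulting dual pairings back into Bregman divergences using the symmetrization formula and the primal--dual symmetry \eqref{eq:DfDf*}. The exponential decay \eqref{eq:stability} then follows from \eqref{eq:stLy} by discarding the two nonnegative terms and invoking Gr\"onwall.

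\textbf{Step 1: derivatives of the two pieces.} Using \eqref{eq:gradD},
$\nabla_x D_{\fb}(x,x^{\star}) = \nabla \fb(x) - \nabla \fb(x^{\star})$ and
$\nabla_\eta D_{\phi^*}(\eta,\chi^{\star}) = \nabla \phi^*(\eta) - \nabla \phi^*(\chi^{\star}) = y - x^{\star}$.
A key observation is that, although $\nabla f(x^{\star})=0$, the shifted function satisfies $\nabla \fb(x^{\star}) = -\mu\chi^{\star}$, which I would use freely below. Plugging $\mathcal{G}=(y-x,\,-\mu^{-1}\nabla\fb(x)-\eta)$ into $\nabla\mathcal{E}\cdot\mathcal{G}$ gives
\begin{equation*}
\nabla\mathcal{E}(x,\eta)\cdot\mathcal{G}(x,\eta)
 = \langle \nabla \fb(x) - \nabla \fb(x^{\star}),\, y-x\rangle + \mu\langle y-x^{\star},\, -\mu^{-1}\nabla\fb(x)-\eta\rangle.
\end{equation*}

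\textbf{Step 2: reassemble into Bregman divergences.} I would first telescope the $\nabla\fb(x)$ contributions: the $y$--terms cancel and what remains on the primal side is $\langle \nabla\fb(x)-\nabla\fb(x^{\star}),\, x^{\star}-x\rangle$, which by the symmetrization of the Bregman divergence equals $-D_{\fb}(x,x^{\star}) - D_{\fb}(x^{\star},x)$. For the residual pairings, using $\nabla\fb(x^{\star})=-\mu\chi^{\star}$ together with a short bookkeeping calculation, the remaining terms collapse to $-\mu\langle y-x^{\star},\,\eta-\chi^{\star}\rangle$. Applying symmetrization to $\phi^{*}$ and then \eqref{eq:DfDf*} (i.e., $D_{\phi^{*}}(\chi^{\star},\eta) = D_\phi(y,x^{\star})$) yields
\begin{equation*}
\mu\langle y-x^{\star},\, \eta-\chi^{\star}\rangle = \mu D_{\phi^{*}}(\eta,\chi^{\star}) + \mu D_\phi(y,x^{\star}).
\end{equation*}
Assembling these pieces gives exactly \eqref{eq:stLy}, with the grouping $\mathcal{E} = D_{\fb}(x,x^{\star}) + \mu D_{\phi^{*}}(\eta,\chi^{\star})$ accounting for two of the four nonnegative terms.

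\textbf{Step 3: exponential decay.} Since $\fb$ is convex under \cref{assump: A1}, $D_{\fb}(x^{\star},x)\ge 0$; and $D_\phi(y,x^{\star})\ge 0$ by convexity of $\phi$. Dropping these two nonnegative terms in \eqref{eq:stLy} yields $\tfrac{d}{dt}\mathcal{E}(x(t),\eta(t)) \le -\mathcal{E}(x(t),\eta(t))$ along the flow, and Gr\"onwall's inequality then delivers \eqref{eq:stability}.

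\textbf{Main obstacle.} The only delicate point is the algebra of Step 2: one has to remember that $\nabla\fb(x^{\star})\neq 0$, so naive cancellation fails, and the leftover ``cross terms'' between primal and dual variables must be recognized as a pairing of the form $\langle y-x^{\star},\eta-\chi^{\star}\rangle$ before \eqref{eq:DfDf*} can convert it into $D_\phi(y,x^{\star})$. Once that conversion is identified, the rest is bookkeeping.
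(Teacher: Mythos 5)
Your proof is correct and follows essentially the same route as the paper: compute $\nabla\mathcal{E}$ via \eqref{eq:gradD}, use $\nabla \fb(x^{\star})=-\mu\chi^{\star}$ to recombine the cross terms into $\langle \nabla\fb(x)-\nabla\fb(x^{\star}),x-x^{\star}\rangle + \mu\langle y-x^{\star},\eta-\chi^{\star}\rangle$, then symmetrize and apply \eqref{eq:DfDf*}, finishing with Gr\"onwall. No gaps.
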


\begin{proof}
Using~\eqref{eq:gradD}, we compute the gradients of the Lyapunov function:
$$
\partial_x \mathcal{E} = \nabla \fb(x) - \nabla \fb(x^{\star}), \quad
\partial_\eta \mathcal{E} = \mu \left( \nabla \phi^*(\eta) - \nabla \phi^*(\chi^{\star}) \right) = \mu (y - x^{\star}).
$$
By direct calculation and the symmetry relation~\eqref{eq:DfDf*}, we have
\begin{align*}
 -\nabla \mathcal{E}(x, \eta) \cdot \mathcal{G}(x, \eta)
&= \langle \nabla \fb(x) - \nabla \fb(x^{\star}), x - y \rangle \\
&+ \mu \left\langle y - x^{\star},\, \mu^{-1} (\nabla \fb(x) - \nabla \fb(x^{\star})) + \eta - \chi^{\star} \right\rangle \\
&= \langle \nabla \fb(x) - \nabla \fb(x^{\star}), x - x^{\star} \rangle
+ \mu \langle \nabla \phi^*(\eta) - \nabla \phi^*(\chi^{\star}), \eta - \chi^{\star} \rangle \\
&= D_{\fb}(x, x^{\star}) + D_{\fb}(x^{\star}, x) + \mu D_{\phi^*}(\chi^{\star}, \eta) + \mu D_{\phi^*}(\eta, \chi^{\star}) \\
&= \mathcal{E}(x, \eta) + D_{\fb}(x^{\star}, x) + \mu D_\phi(y, x^{\star}).
\end{align*}
Since $D_{\fb}(x^{\star}, x) \geq 0$ and $D_\phi(y, x^{\star}) \geq 0$ follow from the convexity of $\fb$ and $\phi$, we obtain the inequality
$$
\nabla \mathcal{E}(x, \eta) \cdot \mathcal{G}(x, \eta) \leq - \mathcal{E}(x, \eta).
$$
The exponential decay \eqref{eq:stability} follows from Gr{\"o}nwall's inequality.
\end{proof}

As a result, we have $x(t), y(t) \in B(x^{\star}, R)$ for all $t \geq 0$, where $B(x^{\star}, R)$ is the ball of radius $R$ centered at $x^{\star}$.

\subsection{Accelerated Mirror Descent Methods}
We propose an accelerated mirror descent (Acc-MD) method via an implicit--explicit discretization of~\eqref{eq:VOSflow}:
\begin{subequations}\label{Acc-MD}
\begin{align}
\label{eq:AGD1}
&\frac{x_{k+1}-x_k}{\alpha}= y_k - x_{k+1}, \\
\label{eq:AGD2}
&\frac{\nabla \phi(y_{k+1})-\nabla \phi(y_k)}{\alpha}
= - \frac{1}{\mu}\bigl(2\nabla \fb(x_{k+1})-\nabla \fb(x_k)\bigr) - \nabla \phi(y_{k+1}).
\end{align}
\end{subequations}
In~\eqref{eq:AGD1}, $y$ is discretized explicitly as $y_k$ to update $x_{k+1}$, whereas in~\eqref{eq:AGD2} the term $-\nabla\phi(y)$ is implicit. The extrapolation
$$
\nabla \fb(x)\;\approx\;2\nabla \fb(x_{k+1})-\nabla \fb(x_k)
$$
acts as an accelerated over-relaxation (AOR), symmetrizing the error equation. This mechanism was used in~\cite{wei2024accelerated} to develop the first globally convergent accelerated Heavy-Ball method (AOR-HB).

The VOS framework~\cite{chen2025accelerated} is not naturally built for mirror geometry. To extend it beyond the Euclidean setting, we introduce a new geometric assumption.

\begin{assumption}\label{assump: A2}
For $f, \phi \in \mathcal C^1(V)$, there exists a constant $C_{f,\phi}>0$ such that the following generalized Cauchy--Schwarz (GCS) inequality holds:
\begin{equation}\tag{A2}\label{eq: A2}
\bigl| \langle \nabla \fb(x)-\nabla \fb(\hat x),\, y-\hat y \rangle \bigr|
\le 2\sqrt{C_{f,\phi}}\,
D_{\fb}^{1/2}(x,\hat x)\, D_{\phi}^{1/2}(\hat y,y),
\quad \forall x,\hat x,y,\hat y \in V.
\end{equation}
\end{assumption}

Briefly, \cref{assump: A1} controls distances, while \cref{assump: A2} imposes an angle condition. Using dual variables $\zeta = \nabla \fb(x)$ and the identity $D_{\fb}(x,\hat x)=D_{\fb^*}(\hat \zeta, \zeta)$, we rewrite the inequality as
\begin{equation}\label{eq:CSA2}
\frac{\bigl| \langle \zeta - \hat \zeta,\, y-\hat y \rangle \bigr|}
{\sqrt{2}\, D_{\fb^*}^{1/2}(\hat \zeta,\zeta)\, \sqrt{2}\, D_{\phi}^{1/2}(\hat y,y)}
\le \sqrt{C_{f,\phi}},
\qquad \forall x,\hat x,y,\hat y \in V.
\end{equation}
In the quadratic case, $\sqrt{2}\, D_{\phi}^{1/2}(\hat y,y)=\|\hat y-y\|_{\nabla^2\phi}$. Inequality~\eqref{eq:CSA2} therefore measures the angle between vectors under different geometries.

We summarize~\eqref{Acc-MD} in \cref{alg:AccMD} and call it the {\it forward Acc-MD} method.

\begin{algorithm}
\caption{Forward Accelerated Mirror Descent (Acc-MD)}
\label{alg:AccMD}
\linespread{1.325}\selectfont
\begin{algorithmic}[1]
\STATE \textbf{Input:} $x_0, y_0 \in \mathbb{R}^n$, parameters $\mu>0$, $C_{f,\phi}>0$
\STATE Set $\alpha = \sqrt{\mu/C_{f,\phi}}$
\FOR{$k=0,1,2,\dots$}
\STATE $\displaystyle x_{k+1} = \frac{1}{1+\alpha}\bigl(x_k + \alpha y_k\bigr)$
\STATE $\displaystyle
y_{k+1}
=
\arg\min_{y\in\mathbb R^n}
(1+\alpha)\phi(y)
-
\left\langle
-\frac{\alpha}{\mu}\bigl(2\nabla \fb(x_{k+1})-\nabla \fb(x_k)\bigr)
+\nabla\phi(y_k),\, y
\right\rangle$
\ENDFOR
\end{algorithmic}
\end{algorithm}

We also consider an alternative discretization:
\begin{subequations}\label{Acc-MD-alternative}
\begin{align}
\label{eq:AGD1-alternative}
&\frac{x_{k+1}-x_k}{\alpha}= 2y_{k+1}-y_k-x_{k+1}, \\
\label{eq:AGD2-alternative}
&\frac{\nabla\phi(y_{k+1})-\nabla\phi(y_k)}{\alpha}
=
-\frac{1}{\mu}\nabla f(x_k)
+\nabla\phi(x_k)
-\nabla\phi(y_{k+1}).
\end{align}
\end{subequations}
In this scheme, the gradient term $\nabla \fb(x)$ is explicit in~\eqref{eq:AGD2-alternative}, while $y$ in~\eqref{eq:AGD1-alternative} uses AOR. Since the update computes $y_{k+1}$ then $x_{k+1}$, we call this the {\it backward Acc-MD} method; see \cref{alg:AccMDback}.

\begin{algorithm}
\caption{Backward Accelerated Mirror Descent}
\label{alg:AccMDback}
\linespread{1.325}\selectfont
\begin{algorithmic}[1]
\STATE \textbf{Input:} $x_0, y_0 \in \mathbb{R}^n$, parameters $\mu>0$, $C_{f,\phi}>0$
\STATE Set $\alpha = \sqrt{\mu/C_{f,\phi}}$
\FOR{$k=0,1,2,\dots$}
\STATE $\displaystyle
y_{k+1}
=
\arg\min_{y\in\mathbb R^n}
(1+\alpha)\phi(y)
-
\left\langle
-\frac{\alpha}{\mu}\nabla f(x_k)
+\alpha\nabla\phi(x_k)
+\nabla\phi(y_k),\, y
\right\rangle$
\STATE $\displaystyle
x_{k+1}
=
\frac{1}{1+\alpha}\bigl [x_k+\alpha\bigl(2y_{k+1}-y_k\bigr)\bigr]$
\ENDFOR
\end{algorithmic}
\end{algorithm}

One benefit of the forward Acc-MD scheme is that $x_{k+1}$ is a convex combination of $x_k$ and $y_k$. Thus, if $x_k,y_k$ stay in a convex set $K$, then $x_{k+1}$ does too. This property is useful when $K$ is a constraint set.

\subsection{Convergence analysis}
We establish the accelerated linear convergence result for the Acc-MD methods under relative strong convexity $\mu > 0$. We first use linear cases to illustrate the proof.

\paragraph{Convergence analysis of linear cases}
Introduce the matrices
$$
\mathcal A^{\text{sym}}=
\begin{pmatrix}
0 & A_{-\mu}\\
A_{-\mu} & 0
\end{pmatrix},
\quad
\mathcal E(\boldsymbol z)=\frac12\|\boldsymbol z-\boldsymbol z^\star\|_{\mathcal D}^2,
\quad
\mathcal E^\alpha(\boldsymbol z)=\frac12\|\boldsymbol z-\boldsymbol z^\star\|_{\mathcal D+\alpha\mathcal A^{\text{sym}}}^2 .
$$
Spectral analysis shows that $\mathcal D+\alpha\mathcal A^{\text{sym}}> 0$ if
$$
|\alpha|\le
\sqrt{\mu/\lambda_{\max}\!\bigl(BA_{-\mu}\bigr)}
< \sqrt{\frac{\mu}{L-\mu}} .
$$
We take $\alpha=\sqrt{\mu/L}$, so that $\mathcal D\pm\alpha\mathcal A^{\text{sym}}$ are symmetric positive definite (SPD). Since $\mathcal E$ is quadratic, we have
\begin{equation}\label{eq:Edifference}
\mathcal E(\boldsymbol z_{k+1})-\mathcal E(\boldsymbol z_k)
= \langle \nabla\mathcal E(\boldsymbol z_{k+1}),\,\boldsymbol z_{k+1}-\boldsymbol z_k \rangle
-\frac12\|\boldsymbol z_{k+1}-\boldsymbol z_k\|_{\mathcal D}^2 .
\end{equation}
The Acc-MD method corrects the implicit Euler discretization. Let $\Delta\boldsymbol z_k=\boldsymbol z_{k+1}-\boldsymbol z_k$. We write
$$
\boldsymbol z_{k+1}-\boldsymbol z_k
=\alpha\mathcal G(\boldsymbol z_{k+1})-\alpha\mathcal C\,\Delta\boldsymbol z_k
:=\alpha\mathcal G(\boldsymbol z_{k+1})
-\alpha
\begin{pmatrix}
0 & I\\
\mu^{-1}BA_{-\mu} & 0
\end{pmatrix}
(\boldsymbol z_{k+1}-\boldsymbol z_k).
$$
Expanding the cross term gives
$$
\begin{aligned}
\langle \nabla \mathcal E(\boldsymbol z_{k+1}),\,\mathcal C \Delta\boldsymbol z_k \rangle
={}& \langle \boldsymbol z_{k+1}-\boldsymbol z^\star,\,\Delta\boldsymbol z_k \rangle_{\mathcal D\mathcal C}
= \langle \boldsymbol z_{k+1}-\boldsymbol z^\star,\,\boldsymbol z_{k+1}-\boldsymbol z_k \rangle_{\mathcal A^{\text{sym}}}\\
={}&\frac12\|\boldsymbol z_{k+1}-\boldsymbol z^\star\|_{\mathcal A^{\text{sym}}}^2
+\frac12\|\boldsymbol z_{k+1}-\boldsymbol z_k\|_{\mathcal A^{\text{sym}}}^2-\frac12\|\boldsymbol z_k-\boldsymbol z^\star\|_{\mathcal A^{\text{sym}}}^2.
\end{aligned}
$$
Using the strong Lyapunov property $\nabla \mathcal E(\boldsymbol z_{k+1})\cdot \mathcal G(\boldsymbol z_{k+1})= - 2\,\mathcal E(\boldsymbol z_{k+1})$, we obtain
\begin{equation}\label{eq:decay linear}
\mathcal E^\alpha(\boldsymbol z_{k+1})-\mathcal E^\alpha(\boldsymbol z_k)
=-\alpha\mathcal E^\alpha(\boldsymbol z_{k+1})
-\frac{\alpha}{2}\|\boldsymbol z_{k+1}-\boldsymbol z^\star\|_{\mathcal D-\alpha\mathcal A^{\text{sym}}}^2
-\frac12\| \Delta \boldsymbol z_k\|_{\mathcal D+\alpha\mathcal A^{\text{sym}}}^2 .
\end{equation}
Dropping negative terms yields linear convergence:
$$
\mathcal E^\alpha(\boldsymbol z_{k+1})-\mathcal E^\alpha(\boldsymbol z_k)
\le -\alpha\mathcal E^\alpha(\boldsymbol z_{k+1})
\quad \Longrightarrow \quad
\mathcal E^\alpha(\boldsymbol z_{k+1})\le (1+\alpha)^{-1}\mathcal E^\alpha(\boldsymbol z_k).
$$
The AOR term associates the cross term with $\mathcal A^{\text{sym}}$, allowing the identity of squares to control the cross term. 

\paragraph{Convergence analysis of nonlinear cases}
For $(x, \hat x, y, \hat y)$ and $\alpha \in \mathbb R$, we define the cross term
$$
\mathcal B^\alpha(x, \hat x, \hat y, y) 
    = D_{\fb}(x, \hat x) + \mu D_{\phi}(\hat y, y) 
      + \alpha \langle \nabla \fb(x) - \nabla \fb(\hat x),\, y - \hat y \rangle.
$$
We define the modified Lyapunov function
\begin{equation}\label{mod_lya}
\mathcal{E}^\alpha(x,y):=\mathcal{B}^\alpha(x,x^{\star}, x^{\star},y)= \mathcal{E}(x,\eta(y)) +\alpha\langle \nabla \fb(x)-\nabla \fb(x^{\star}),y-x^{\star} \rangle.
\end{equation}
We now generalize identity \eqref{eq:decay linear}.

\begin{lemma}\label{lem:identity}
    Let $(x_k,y_k)$ be the sequence from the Acc-MD method \eqref{Acc-MD}. Then
\begin{equation}\label{eq:Epha}
\begin{aligned}
\mathcal{E}^\alpha(x_{k+1},y_{k+1})-\mathcal{E}^\alpha(x_k,y_k) ={}& - \alpha \mathcal{E}^\alpha(x_{k+1},y_{k+1}) \\
&- \alpha \, \mathcal B^{-\alpha}(x^{\star}, x_{k+1},y_{k+1}, x^{\star}) -   \mathcal B^{\alpha}(x_k, x_{k+1},y_{k+1}, y_k).
\end{aligned}
\end{equation}
\end{lemma}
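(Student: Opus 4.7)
I plan to derive identity \eqref{eq:Epha} as the direct nonlinear analog of the linear-case computation \eqref{eq:decay linear}: the three-point identity \eqref{eq:Bregmanidentity} will replace the quadratic expansion, and the AOR extrapolation $2\gfb(x_{k+1})-\gfb(x_k)$ in \eqref{eq:AGD2} will play the role of the symmetrization by $\mathcal A^{\mathrm{sym}}$. Using the symmetry \eqref{eq:DfDf*}, I first rewrite
\[
\mathcal{E}^\alpha(x,y) = D_{\fb}(x,x^{\star}) + \mu D_{\phi^*}(\eta,\chi^{\star}) + \alpha\langle\gfb(x)-\gfb(x^{\star}),\, y-x^{\star}\rangle,
\]
and split the increment $\mathcal{E}^\alpha(x_{k+1},y_{k+1})-\mathcal{E}^\alpha(x_k,y_k)$ into three differences: one for $D_{\fb}(\cdot,x^{\star})$, one for $\mu D_{\phi^*}(\cdot,\chi^{\star})$, and one for the $\alpha$-weighted cross term.

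\textbf{Reducing the divergences.}
Next I apply \eqref{eq:Bregmanidentity} to $D_{\fb}(\cdot,x^{\star})$ at the pair $(x_k,x_{k+1})$ and to $D_{\phi^*}(\cdot,\chi^{\star})$ at $(\eta_k,\eta_{k+1})$, using $D_{\phi^*}(\eta_k,\eta_{k+1})=D_\phi(y_{k+1},y_k)$ to obtain
\begin{align*}
D_{\fb}(x_{k+1},x^{\star})-D_{\fb}(x_k,x^{\star}) &= \langle\gfb(x_{k+1})-\gfb(x^{\star}),\, x_{k+1}-x_k\rangle - D_{\fb}(x_k,x_{k+1}),\\
\mu\bigl[D_{\phi^*}(\eta_{k+1},\chi^{\star})-D_{\phi^*}(\eta_k,\chi^{\star})\bigr] &= \mu\langle y_{k+1}-x^{\star},\, \eta_{k+1}-\eta_k\rangle - \mu D_\phi(y_{k+1},y_k).
\end{align*}
The residual $-D_{\fb}(x_k,x_{k+1})-\mu D_\phi(y_{k+1},y_k)$ already accounts for the Bregman part of $-\mathcal{B}^\alpha(x_k,x_{k+1},y_{k+1},y_k)$; what remains is to process the linear pieces using the scheme.

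\textbf{Substituting the scheme.}
From \eqref{eq:AGD1} I replace $x_{k+1}-x_k=\alpha(y_k-x_{k+1})$, and from \eqref{eq:AGD2} I replace $\mu(\eta_{k+1}-\eta_k)=-\alpha(2\gfb(x_{k+1})-\gfb(x_k))-\alpha\mu\eta_{k+1}$. Using $\nabla f(x^{\star})=0$, so $\gfb(x^{\star})=-\mu\chi^{\star}$, I split the AOR coefficient as
\[
2\gfb(x_{k+1})-\gfb(x_k) = \bigl(\gfb(x_{k+1})-\gfb(x^{\star})\bigr) + \bigl(\gfb(x_{k+1})-\gfb(x_k)\bigr) + \gfb(x^{\star}),
\]
and rewrite the cross-term change as $\alpha\langle\gfb(x_{k+1})-\gfb(x_k),y_{k+1}-x^{\star}\rangle+\alpha\langle\gfb(x_k)-\gfb(x^{\star}),y_{k+1}-y_k\rangle$. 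After substitution, the two copies of $\alpha\langle\gfb(x_{k+1})-\gfb(x_k),y_{k+1}-x^{\star}\rangle$ cancel between the scheme and the cross-term expansion, and the surviving linear terms collapse to
\[
-\alpha\langle\gfb(x_{k+1})-\gfb(x^{\star}),\, x_{k+1}-x^{\star}\rangle -\alpha\mu\langle\eta_{k+1}-\chi^{\star},\, y_{k+1}-x^{\star}\rangle -\alpha\langle\gfb(x_{k+1})-\gfb(x_k),\, y_{k+1}-y_k\rangle.
\]
Applying the Bregman symmetrization $D_f(a,b)+D_f(b,a)=\langle\nabla f(a)-\nabla f(b),a-b\rangle$ in reverse to the first two terms reconstructs the Bregman content of $-\alpha\mathcal{E}^\alpha(x_{k+1},y_{k+1})-\alpha\mathcal{B}^{-\alpha}(x^{\star},x_{k+1},y_{k+1},x^{\star})$, while the $\pm\alpha^2$ cross pieces hidden inside those two quantities cancel against each other. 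The third term combines with the leftover $-D_{\fb}(x_k,x_{k+1})-\mu D_\phi(y_{k+1},y_k)$ to form $-\mathcal{B}^\alpha(x_k,x_{k+1},y_{k+1},y_k)$, yielding \eqref{eq:Epha}.

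\textbf{Main obstacle.}
The identity is purely algebraic; no relative convexity or smoothness hypothesis enters, so the only difficulty is careful bookkeeping of several similar bilinear pairings. The delicate step will be verifying simultaneously the cancellation of the $\pm\alpha^2$ cross terms generated by $\mathcal{E}^\alpha$ and $\mathcal{B}^{-\alpha}$, and of the $\alpha\langle\gfb(x_{k+1})-\gfb(x_k),y_{k+1}-x^{\star}\rangle$ pieces produced by the scheme and by the cross-term change. The AOR coefficient $2\gfb(x_{k+1})-\gfb(x_k)$ is tuned precisely so these cancellations succeed—any other coefficient would leave an unbalanced residual—mirroring the role played by $\mathcal{A}^{\mathrm{sym}}$ in the linear argument.
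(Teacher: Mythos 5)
Your proposal is correct and follows essentially the same route as the paper: expand the Bregman divergences at the new iterate $(x_{k+1},\eta_{k+1})$, substitute the scheme \eqref{Acc-MD}, and observe that the AOR extrapolation symmetrizes the cross term so everything regroups into $-\alpha\mathcal E^\alpha-\alpha\mathcal B^{-\alpha}-\mathcal B^{\alpha}$. The only cosmetic difference is that the paper invokes the strong Lyapunov identity \eqref{eq:stLy} as a black box for the $\alpha\langle\nabla\mathcal E(\boldsymbol z_{k+1}),\mathcal G(\boldsymbol z_{k+1})\rangle$ piece, whereas you re-derive its content inline via the symmetrization $D_{\fb}(a,b)+D_{\fb}(b,a)=\langle\gfb(a)-\gfb(b),a-b\rangle$; the decomposition and all cancellations (including the $\pm\alpha^2$ terms) check out.
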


\begin{proof}
Let $\boldsymbol z = (x,\eta)$. We expand the difference $\mathcal E(\boldsymbol z)$ at $\boldsymbol z_{k+1}$:
\begin{align*}
\mathcal E(\boldsymbol z_{k+1}) - \mathcal E(\boldsymbol z_k)={}&\langle \nabla \mathcal E(\boldsymbol z_{k+1}),\boldsymbol z_{k+1}-\boldsymbol z_k
\rangle -D_{\mathcal E}(\boldsymbol z_k,\boldsymbol z_{k+1}) \\
={}& \alpha\langle \nabla \mathcal E(\boldsymbol z_{k+1}),\mathcal{G}(\boldsymbol z_{k+1}) \rangle - D_{\mathcal E}(\boldsymbol z_k,\boldsymbol z_{k+1}) \\
&- \alpha\langle \nabla \fb(x_{k+1})-\nabla \fb(x^{\star}), y_{k+1}-y_k \rangle \\
&- \alpha\langle y_{k+1} - x^{\star}, \nabla \fb(x_{k+1})-\nabla \fb(x_k) \rangle.
\end{align*}
The difference from the implicit Euler discretization is symmetrized as it can be written as
\begin{equation*}
\begin{aligned}
& -\alpha \langle \nabla \fb(x_{k+1})-\nabla \fb(x^{\star}),y_{k+1}-x^{\star} \rangle + \alpha  \langle \nabla \fb(x_{k})-\nabla \fb(x^{\star}), y_{k}-x^{\star} \rangle \\
&- \alpha \langle \nabla \fb(x_{k+1})-\nabla \fb(x_k), y_{k+1} - y_k \rangle.
\end{aligned}
\end{equation*}
Using identity (\ref{eq:stLy}) to expand $\langle \nabla \mathcal E(\boldsymbol z_{k+1}),\mathcal{G}(\boldsymbol z_{k+1}) \rangle$ and rearranging gives
\begin{align*}
 &     \mathcal{E}^\alpha(x_{k+1},y_{k+1})-\mathcal{E}^\alpha(x_k,y_k) \\
      ={}& -\alpha \mathcal E(x_{k+1}, \eta_{k+1}) - \alpha D_{\fb} (x^{\star}, x_{k+1}) - \mu \alpha D_{\phi}(y_{k+1}, x^{\star}) \\
      & - D_{\mathcal E}(\boldsymbol z_k,\boldsymbol z_{k+1}) - \alpha \langle \nabla \fb(x_{k+1})-\nabla \fb(x_k), y_{k+1} - y_k \rangle \\
      ={}& -\alpha \mathcal E^{\alpha}(x_{k+1}, y_{k+1}) - \alpha \, \mathcal B^{-\alpha}(x^{\star}, x_{k+1},y_{k+1}, x^{\star}) - \mathcal B^{\alpha}(x_k, x_{k+1},y_{k+1}, y_k).
\end{align*}
\end{proof}

\cref{assump: A2} is essential both for defining the algorithm and for establishing its convergence.

\begin{lemma}\label{lem:crossterm}
Suppose \cref{assump: A2} holds. Then for $|\alpha| \leq \sqrt{\mu / C_{f,\phi}}$, we have $\mathcal B^{\alpha}(x, \hat x, \hat y, y) \geq 0.$
\end{lemma}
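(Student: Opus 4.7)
The plan is to bound the signed cross term $\alpha \langle \nabla \fb(x)-\nabla \fb(\hat x),\, y-\hat y \rangle$ using \cref{assump: A2}, and then absorb the resulting product $D_{\fb}^{1/2}(x,\hat x)\cdot D_{\phi}^{1/2}(\hat y,y)$ into the two non-negative Bregman divergences via Young's inequality with weights chosen so that the condition $|\alpha|\le\sqrt{\mu/C_{f,\phi}}$ is exactly what makes the combination non-negative.

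Concretely, the first step is to apply \eqref{eq: A2} to the cross term, which gives
\[
\alpha \langle \nabla \fb(x)-\nabla \fb(\hat x),\, y-\hat y \rangle
\;\ge\; -\,|\alpha|\cdot 2\sqrt{C_{f,\phi}}\,D_{\fb}^{1/2}(x,\hat x)\,D_{\phi}^{1/2}(\hat y,y).
\]
The second step is to apply the weighted AM--GM inequality $2ab\le \lambda a^2+\lambda^{-1}b^2$ with $a=D_{\fb}^{1/2}(x,\hat x)$, $b=\sqrt{\mu}\,D_{\phi}^{1/2}(\hat y,y)$, and weight $\lambda=|\alpha|\sqrt{C_{f,\phi}/\mu}$. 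This yields
\[
2\sqrt{C_{f,\phi}}\,|\alpha|\,D_{\fb}^{1/2}(x,\hat x)\,D_{\phi}^{1/2}(\hat y,y)
\;\le\; \frac{\alpha^2 C_{f,\phi}}{\mu}\,D_{\fb}(x,\hat x)\;+\;\mu\,D_{\phi}(\hat y,y).
\]
The third step is to use the hypothesis $\alpha^2\le \mu/C_{f,\phi}$ to conclude $\alpha^2 C_{f,\phi}/\mu\le 1$, so the right-hand side is bounded by $D_{\fb}(x,\hat x)+\mu\,D_{\phi}(\hat y,y)$. Combining with the first step gives exactly $\mathcal B^{\alpha}(x,\hat x,\hat y,y)\ge 0$.

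There is no real obstacle here; the only delicate point is choosing the weight $\lambda$ in Young's inequality so that the critical threshold aligns with $\sqrt{\mu/C_{f,\phi}}$, which the dual rescaling by $\sqrt{\mu}$ handles naturally. The case $\alpha=0$ is immediate since $D_{\fb},D_{\phi}\ge 0$ by convexity. Note that the argument is symmetric in the sign of $\alpha$, which is consistent with the two-sided range $|\alpha|\le\sqrt{\mu/C_{f,\phi}}$ and the later use of both $\mathcal B^{\alpha}$ and $\mathcal B^{-\alpha}$ in identity~\eqref{eq:Epha}.
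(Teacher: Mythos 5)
Your proof is correct and follows essentially the same route as the paper: apply \eqref{eq: A2} to the cross term and then absorb the product into the two Bregman divergences via Young's inequality, with the threshold $|\alpha|\le\sqrt{\mu/C_{f,\phi}}$ emerging from the choice of weight. The only cosmetic difference is that you place the weight $\lambda=|\alpha|\sqrt{C_{f,\phi}/\mu}$ inside the AM--GM step (yielding the coefficient $\alpha^2 C_{f,\phi}/\mu$ on $D_{\fb}$), whereas the paper uses the symmetric bound $\alpha\sqrt{C_{f,\phi}/\mu}\,(D_{\fb}+\mu D_\phi)$; both give the same conclusion.
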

\begin{proof}
By \eqref{eq: A2} and the inequality $2ab\leq a^2+b^2$, we have
$$
\begin{aligned}
\alpha | \langle \nabla \fb(x) - \nabla \fb(\hat x), y - \hat y \rangle | &\leq 2\alpha \sqrt{C_{f,\phi}} \, D_{\fb}^{1/2}(x,\hat x) D_{\phi}^{1/2}(\hat y, y) \\
&\leq \alpha \sqrt{C_{f,\phi}/\mu} \left(D_{\fb}(x,\hat x)+\mu D_{\phi}(\hat y, y)\right).
\end{aligned}
$$
Therefore $\mathcal B^{\alpha}(x, \hat x, \hat y, y) \geq 0$ if $\alpha \leq \sqrt{\mu/C_{f,\phi}}$.
\end{proof}

As shown in \cref{lem:crossterm}, when $\alpha \leq \sqrt{\mu/C_{f,\phi}}$, the modified energy $\mathcal{E}^\alpha(x,y) \geq 0$ is a valid Lyapunov function. 

\begin{theorem}[Convergence of forward Acc-MD method]\label{thm:convergence rate of Acc-MD dual}
Suppose $f$ is $\mu$-relatively convex with respect to $\phi$ with $\mu > 0$ and \cref{assump: A2} holds with constant $C_{f,\phi}$. Let $(x_k,y_k)$ be the sequence from scheme \eqref{Acc-MD} with initial value $(x_0, y_0)$, $\eta_k = \nabla \phi(y_k)$, and step size $\alpha = \sqrt{\mu/C_{f,\phi}}$. Then there exists a constant $C_0$ such that the following accelerated linear convergence holds:
\begin{equation}\label{eq: linear conv Acc-MD}
D_{\fb}(x_{k+1}, x^{\star}) + \mu D_{\phi^*}(\eta_{k+1}, \chi^{\star}) \leq C_0 \left( \frac{1}{1+\sqrt{\mu/C_{f,\phi}}} \right)^k, \quad k \geq 1.
\end{equation}
\end{theorem}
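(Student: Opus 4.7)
The plan is to iterate the identity~\eqref{eq:Epha} to obtain exponential decay of the modified Lyapunov function $\mathcal{E}^\alpha$, and then recover a bound on the original Lyapunov $\mathcal{E}(x,\eta) = D_{\fb}(x,x^{\star}) + \mu D_{\phi^*}(\eta,\chi^{\star})$. With the step size $\alpha = \sqrt{\mu/C_{f,\phi}}$, Lemma~\ref{lem:crossterm} applies to both $\mathcal{B}^\alpha(x_k, x_{k+1}, y_{k+1}, y_k)$ and $\mathcal{B}^{-\alpha}(x^{\star}, x_{k+1}, y_{k+1}, x^{\star})$, guaranteeing that both are non-negative. Dropping these terms in~\eqref{eq:Epha} rearranges the identity to $(1+\alpha)\mathcal{E}^\alpha(x_{k+1}, y_{k+1}) \leq \mathcal{E}^\alpha(x_k, y_k)$, and iterating gives $\mathcal{E}^\alpha(x_k, y_k) \leq (1+\alpha)^{-k}\mathcal{E}^\alpha(x_0, y_0)$.

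The main obstacle is then transferring this decay back to $\mathcal{E}$ itself. Because $\alpha = \sqrt{\mu/C_{f,\phi}}$ is the critical value, the Young-type bound underlying Lemma~\ref{lem:crossterm} is tight, so an estimate of the form $\mathcal{E}^\alpha \geq c\,\mathcal{E}$ can only be obtained with $c=0$, and a direct comparison fails. The key observation I would exploit is that the cross terms in $\mathcal{E}^\alpha(x,y)$ and in $\mathcal{B}^{-\alpha}(x^{\star}, x, y, x^{\star})$ differ only by a sign, so they cancel when summed:
\begin{equation*}
\mathcal{E}^\alpha(x, y) + \mathcal{B}^{-\alpha}(x^{\star}, x, y, x^{\star}) = D_{\fb}(x, x^{\star}) + D_{\fb}(x^{\star}, x) + \mu D_{\phi}(x^{\star}, y) + \mu D_{\phi}(y, x^{\star}).
\end{equation*}
Since each Bregman divergence is non-negative and $D_{\phi^*}(\eta,\chi^{\star}) = D_{\phi}(x^{\star}, y)$ by~\eqref{eq:DfDf*}, the right-hand side dominates $\mathcal{E}(x,\eta)$.

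To close the argument, I would extract a separate bound on $\mathcal{B}^{-\alpha}(x^{\star}, x_{k+1}, y_{k+1}, x^{\star})$ from the very same identity. Multiplying~\eqref{eq:Epha} by $(1+\alpha)^k$ and telescoping produces $\sum_{j=0}^{k-1}(1+\alpha)^j\bigl[\alpha\,\mathcal{B}^{-\alpha}(x^{\star}, x_{j+1}, y_{j+1}, x^{\star}) + \mathcal{B}^{\alpha}(x_j, x_{j+1}, y_{j+1}, y_j)\bigr] \leq \mathcal{E}^\alpha(x_0, y_0)$, which in particular gives $\mathcal{B}^{-\alpha}(x^{\star}, x_{k+1}, y_{k+1}, x^{\star}) \leq \alpha^{-1}(1+\alpha)^{-k}\mathcal{E}^\alpha(x_0, y_0)$. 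Combining this with the decay of $\mathcal{E}^\alpha$ through the cancellation identity yields
\begin{equation*}
\mathcal{E}(x_{k+1}, \eta_{k+1}) \leq \mathcal{E}^\alpha(x_{k+1}, y_{k+1}) + \mathcal{B}^{-\alpha}(x^{\star}, x_{k+1}, y_{k+1}, x^{\star}) \leq \left(\frac{1}{1+\alpha}+\frac{1}{\alpha}\right)(1+\alpha)^{-k}\,\mathcal{E}^\alpha(x_0, y_0),
\end{equation*}
which is the claimed bound~\eqref{eq: linear conv Acc-MD} with $C_0 = \mathcal{E}^\alpha(x_0, y_0)(1+2\alpha)/(\alpha(1+\alpha))$ evaluated at $\alpha = \sqrt{\mu/C_{f,\phi}}$.
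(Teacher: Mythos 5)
Your proof is correct and follows essentially the same route as the paper: iterate the one-step identity \eqref{eq:Epha} with Lemma~\ref{lem:crossterm} to obtain geometric decay of $\mathcal{E}^\alpha$, then recover $\mathcal{E}$ from the per-step decrement. The cancellation identity you highlight, $\mathcal{E}^\alpha(x,y)+\mathcal{B}^{-\alpha}(x^{\star},x,y,x^{\star})=D_{\fb}(x,x^{\star})+D_{\fb}(x^{\star},x)+\mu D_{\phi}(x^{\star},y)+\mu D_{\phi}(y,x^{\star})$, is exactly the algebra already present in the proof of Lemma~\ref{lem:identity}; the paper simply reads $\alpha\,\mathcal{E}(x_{k+1},\eta_{k+1})\le \mathcal{E}^\alpha(x_k,y_k)-\mathcal{E}^\alpha(x_{k+1},y_{k+1})$ directly off that expansion, which gives the marginally cleaner constant $C_0=\mathcal{E}^\alpha(x_0,y_0)\sqrt{C_{f,\phi}/\mu}$ in place of your $(1+2\alpha)/(\alpha(1+\alpha))$ factor, but the argument and the rate are the same.
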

\begin{proof}
By \cref{lem:crossterm}, for $\alpha = \sqrt{\mu/C_{f,\phi}}$, we drop non-positive terms from identity \eqref{eq:Epha} to obtain
\begin{equation}\label{eq:Ealphaconvergence}
\mathcal{E}^\alpha(x_{k+1},y_{k+1}) \leq \frac{1}{1+\alpha}\mathcal{E}^\alpha(x_{k},y_{k}) \leq \left( \frac{1}{1+\sqrt{\mu/C_{f,\phi}}} \right)^{k+1} \mathcal{E}^\alpha(x_{0},y_{0}).
\end{equation}
From the proof of \cref{lem:identity}, we have 
$$
\alpha \mathcal E(x_{k+1}, \eta_{k+1}) \leq \mathcal{E}^\alpha(x_k,y_k) - \mathcal{E}^\alpha(x_{k+1},y_{k+1}) \leq \left( \frac{1}{1+\sqrt{\mu/C_{f,\phi}}} \right)^{k} \mathcal{E}^\alpha(x_{0},y_{0}),
$$
which yields \eqref{eq: linear conv Acc-MD} with $C_0 = \mathcal{E}^\alpha(x_0,y_0) \sqrt{C_{f,\phi}/\mu}$. 
\end{proof}

The convergence of the backward Acc-MD scheme \eqref{Acc-MD-alternative} can be proved by changing the sign of the Lyapunov function, $\mathcal E^\alpha \to \mathcal E^{-\alpha}$, and repeating the procedure.



If $C_{f,\phi} = c L$, then one exactly recovers the standard accelerated convergence rate like $1 - \sqrt{1/(c \kappa)}$ but the condition number is relative to the mirror function showing the preconditioning effect. 

\subsection{Discussion on the generalized Cauchy--Schwarz condition}\label{sec:A2}
Without extra structural conditions, mirror descent methods under \cref{assump: A1} are limited to $\mathcal{O}(1/k)$ convergence; see~\cite{DragomTaylordAspreBolte2022Optimal}. Consequently, accelerated mirror descent is generally impossible without assumptions beyond~\eqref{eq: A1}. Additional structure, such as the triangle scaling exponent (TSE), the generalized Cauchy--Schwarz (GCS) condition~\eqref{eq: A2}, or related variants \cite{bauschke2019linear,mukkamala2020convex}, is necessary for theoretical guarantees and algorithmic acceleration.

We first show that the GCS angle condition implies relative smoothness.
\begin{lemma}
Suppose \eqref{eq: A2} holds with constant $C_{f,\phi}$. Then $f$ is relatively smooth with respect to $\phi$ with $L \leq 4C_{f,\phi} + \mu$.
\end{lemma}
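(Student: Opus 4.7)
The plan is to specialize the GCS inequality \eqref{eq: A2} to a single pair of points, recognize the left-hand side as a symmetrized Bregman divergence of $\fb$, then absorb one factor into the left using Young's inequality. Concretely, I would substitute $y=\hat x$ and $\hat y=x$ into \eqref{eq: A2}. The right-hand side becomes $2\sqrt{C_{f,\phi}}\,D_{\fb}^{1/2}(x,\hat x)\,D_{\phi}^{1/2}(x,\hat x)$, which already has the Bregman divergence of $\phi$ with the correct argument order for relative smoothness.

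Next, I would rewrite the left-hand side using the symmetrization identity $\langle \nabla \fb(x)-\nabla \fb(\hat x),\,x-\hat x\rangle = D_{\fb}(x,\hat x)+D_{\fb}(\hat x,x)$; the absolute value is redundant here because convexity of $\fb$ (implicit in \eqref{eq: A2} via the square roots, and anyway available from \cref{assump: A1}) makes this expression non-negative. Then I would apply the weighted Young inequality
\[
2\sqrt{C_{f,\phi}}\,D_{\fb}^{1/2}(x,\hat x)\,D_{\phi}^{1/2}(x,\hat x)
\;\le\;
\tfrac{1}{2}\,D_{\fb}(x,\hat x) + 2C_{f,\phi}\,D_\phi(x,\hat x),
\]
which is the decisive step: the weight $1/2$ is chosen so that the $D_{\fb}(x,\hat x)$ on the right can be moved to the left and still leave a positive coefficient.

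After rearranging and dropping the non-negative term $D_{\fb}(\hat x,x)$, I would obtain
\[
D_{\fb}(x,\hat x) \le 4C_{f,\phi}\,D_\phi(x,\hat x).
\]
Adding $\mu\,D_\phi(x,\hat x)$ to both sides and using $D_f = D_{\fb}+\mu D_\phi$ yields $D_f(x,\hat x)\le (4C_{f,\phi}+\mu)\,D_\phi(x,\hat x)$, which is the claimed relative smoothness with $L\le 4C_{f,\phi}+\mu$.

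There is no substantive obstacle here; the only thing to be careful about is the choice of substitution in \eqref{eq: A2} so that $D_\phi$ on the right-hand side comes out with arguments $(x,\hat x)$ rather than $(\hat x,x)$, and the Young weight $\varepsilon = 1/2$ to keep a strictly positive coefficient of $D_{\fb}(x,\hat x)$ on the left after transposition.
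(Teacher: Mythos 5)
Your proof is correct and follows essentially the same route as the paper's: specialize \eqref{eq: A2} to a single pair of points, lower-bound the symmetrized divergence $\langle \nabla \fb(x)-\nabla \fb(\hat x), x-\hat x\rangle$ by $D_{\fb}(x,\hat x)$, and then absorb the square-root factor to obtain $D_{\fb}(x,\hat x)\le 4C_{f,\phi}\,D_\phi(x,\hat x)$ before adding back $\mu D_\phi$. The only cosmetic differences are that you finish with a weighted Young inequality where the paper divides by $D_{\fb}^{1/2}(x,\hat x)$ and squares, and your substitution $(y,\hat y)=(\hat x,x)$ produces $D_\phi$ with its arguments already in the order required by relative smoothness, which is marginally tidier than the paper's choice $(y,\hat y)=(x,\hat x)$.
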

\begin{proof}
Set $y = x$ and $\hat y = \hat x$ in \eqref{eq: A2}. Using the property $\langle \nabla \fb(x) - \nabla \fb(\hat x), x - \hat x \rangle = D_{\fb}(x, \hat x) + D_{\fb}(\hat x, x) \geq D_{\fb}(x, \hat x)$, the GCS inequality becomes:
$$
D_{\fb}(x, \hat x) \leq 2\sqrt{C_{f,\phi}}\, D_{\fb}^{1/2}(x, \hat x) D_{\phi}^{1/2}(\hat x, x).
$$
The desired result then follows through simple algebraic manipulation.
\end{proof}
The constant $4C_{f,\phi}$ in the global bound is a consequence of the potential asymmetry of the Bregman divergence. However, this can be sharpened locally for $\mathcal{C}^2$ functions. By observing that $\langle \nabla \fb(x) - \nabla \fb(\hat x), x - \hat x \rangle = 2D_{\fb}(x, \hat x) + (D_{\fb}(\hat x, x) - D_{\fb}(x, \hat x))$, and noting that the difference $D_{\fb}(\hat x, x) - D_{\fb}(x, \hat x) = o(\|x - \hat x\|^2)$ vanishes faster than the divergence itself, we find that:
$$
D_{\fb}(x, \hat x) \leq (C_{f,\phi} + o(1)) D_{\phi}(\hat x, x) \quad \text{as } x \to \hat x.
$$
This implies that locally, $L \approx C_{f,\phi} + \mu$, which is consistent with the condition number recovered in the quadratic case in \cref{thm:A2}.

We next verify \cref{assump: A2} in several common settings to show it is a natural and practical condition.

\begin{theorem}\label{thm:A2}
Let $A$ be a self-adjoint, positive definite operator. Suppose $\phi$ is $\mu_{\phi}$-strongly convex and $\fb := f - \mu \phi$ is $L_{\fb}$-smooth. Then
$$ C_{f,\phi} \leq \inf_{\text{SPD } A} \frac{L_{\fb}(A)}{\mu_{\phi}(A)} \leq \frac{L_{\fb}}{\mu_{\phi}}. $$
\end{theorem}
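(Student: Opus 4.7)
}

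The plan is to verify the generalized Cauchy--Schwarz inequality \eqref{eq: A2} pointwise for an arbitrary fixed SPD operator $A$, with the constant $L_{\fb}(A)/\mu_\phi(A)$, and then optimize over $A$. The second inequality in the statement is then immediate by taking $A$ equal to the identity.

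First I would split the dual pairing by the weighted Cauchy--Schwarz inequality in the $A$/$A^{-1}$ norms:
\begin{equation*}
\bigl|\langle \nabla \fb(x)-\nabla \fb(\hat x),\, y-\hat y\rangle\bigr|
\le \|\nabla \fb(x)-\nabla \fb(\hat x)\|_{A^{-1}}\,\|y-\hat y\|_A .
\end{equation*}
The two factors can then be estimated separately by the smoothness of $\fb$ and the strong convexity of $\phi$, measured in the chosen norm $\|\cdot\|_A$.

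Next I would control $\|\nabla \fb(x)-\nabla \fb(\hat x)\|_{A^{-1}}$ using co-coercivity. Since $\fb=f-\mu\phi$ is convex under \cref{assump: A1} and $L_{\fb}(A)$-smooth with respect to $\|\cdot\|_A$, the standard Baillon--Haddad type inequality in a general Hilbert norm gives
\begin{equation*}
\tfrac{1}{2L_{\fb}(A)}\,\|\nabla \fb(x)-\nabla \fb(\hat x)\|_{A^{-1}}^2 \le D_{\fb}(x,\hat x).
\end{equation*}
For the second factor, $\mu_\phi(A)$-strong convexity of $\phi$ in the $\|\cdot\|_A$ norm yields $\|\hat y - y\|_A^2 \le \tfrac{2}{\mu_\phi(A)} D_\phi(\hat y,y)$. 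Multiplying these bounds and inserting into the Cauchy--Schwarz estimate above gives
\begin{equation*}
\bigl|\langle \nabla \fb(x)-\nabla \fb(\hat x), y-\hat y\rangle\bigr|
\le 2\sqrt{\tfrac{L_{\fb}(A)}{\mu_\phi(A)}}\; D_{\fb}^{1/2}(x,\hat x)\, D_\phi^{1/2}(\hat y,y),
\end{equation*}
so \eqref{eq: A2} holds with constant $L_{\fb}(A)/\mu_\phi(A)$. By definition $C_{f,\phi}$ is the smallest admissible constant, so $C_{f,\phi}\le L_{\fb}(A)/\mu_\phi(A)$ for every SPD $A$, and taking the infimum (with $A=I$ as a special case for the final bound) closes the proof.

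The main obstacle is ensuring that co-coercivity is available in the weighted norm $\|\cdot\|_A$: this requires that $\fb$ is both convex and $L_{\fb}(A)$-smooth relative to $\|\cdot\|_A$. Convexity of $\fb$ is guaranteed by \cref{assump: A1} (equivalently, $f-\mu\phi$ being convex), and the smoothness is the hypothesis; beyond that the argument is purely a two-norm Cauchy--Schwarz estimate, so no further structural obstruction arises.
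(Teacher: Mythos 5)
Your proposal is correct and follows essentially the same route as the paper's proof: both split the pairing by Cauchy--Schwarz in the $\|\cdot\|_{A^{-1}}$/$\|\cdot\|_A$ pair of norms, bound the gradient factor by co-coercivity of the convex, $L_{\fb}(A)$-smooth function $\fb$, and bound the point factor by $\mu_\phi(A)$-strong convexity of $\phi$, then take $A=I$ for the final inequality. Your explicit remark that convexity of $\fb$ (from \cref{assump: A1}) is needed for the Baillon--Haddad step is a point the paper leaves implicit.
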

\begin{proof}
For any SPD $A$, we have
$$ \begin{aligned} \langle \nabla \fb(x) - \nabla \fb(\hat{x}),\, y - \hat{y} \rangle &= \langle A^{-1/2} (\nabla \fb(x) - \nabla \fb(\hat{x})),\, A^{1/2}(y - \hat{y}) \rangle \\ &\leq \| \nabla \fb(x) - \nabla \fb(\hat{x}) \|_{A^{-1}} \| y - \hat{y} \|_A. \end{aligned} $$
By the strong convexity of $\phi$ in the $\|\cdot\|_A$ norm and the co-convexity of $\fb$ in the $\|\cdot\|_{A^{-1}}$ norm, we obtain
$$ 2 D_\phi(\hat{y}, y) \geq \mu_{\phi}(A) \| y - \hat{y} \|_A^2, \quad 2 D_{\fb}(x, \hat{x}) \geq \frac{1}{L_{\fb}(A)} \| \nabla \fb(x) - \nabla \fb(\hat{x}) \|_{A^{-1}}^2. $$
Combining these inequalities yields the first estimate. Setting $A = I$ gives the second.
\end{proof}

The smoothness of $\fb=f-\mu\phi$ is less restrictive than the smoothness of $f$. We decompose the objective as
$$ f=\fb+\mu\phi. $$
The non-Lipschitz part of $\nabla f$ is absorbed into the mirror function, so the remaining term $\nabla \fb$ is smooth.

\begin{example}[Log-linear model]\label{ex: log-linear}\rm 
Consider the log-linear dual model in machine learning~\cite{Collins2008}:
$$ f(x) = \sum_{i=1}^d x_i \log x_i + \frac12\, x^\top(\mathbf g\mathbf g^\top)x, \quad \phi(x)=\sum_{i=1}^d x_i\log x_i, $$
where $x=(x_i)\in\Delta^{d-1}$ is the dual variable on the $(d-1)$-dimensional simplex and $\mathbf g\in\mathbb R^d$. The mirror function $\phi$ is the Shannon entropy. Setting $\mu=1$, we have $\fb(x)=\tfrac12\,x^\top(\mathbf g\mathbf g^\top)x$. Since $\mathbf g\mathbf g^\top$ is positive semidefinite, we regularize it by
$$ A=\mathbf g\mathbf g^\top+\varepsilon I,\qquad \varepsilon>0. $$
Under this regularization, the smoothness constant is $L_{\fb}(A)=1$. Collins et al. \cite[Lemma 7]{Collins2008} proved that $f$ is $1$-relatively strongly convex and $(1+|A|_\infty)$-relatively smooth with respect to $\phi$, where $| \mathbf{g}\mathbf{g}^\top |_\infty$ is the largest entry of $\mathbf{g}\mathbf{g}^\top$. Thus, \cref{assump: A1} holds with $\mu = 1$ and $L = |\mathbf{g}\mathbf{g}^\top|_\infty$.

The Bregman divergence $D_\phi(x, z)$ is the Kullback--Leibler (KL) divergence between two discrete probability measures: 
$$ D_\phi(x, z) = {\rm KL}(x, z) := \sum_i x_i \log\left(\frac{x_i}{z_i}\right). $$
By Pinsker's inequality, $\lVert y-\hat{y} \rVert_1 \leq \sqrt{2\,\mathrm{KL}(\hat{y},y)}$, which implies
\begin{equation}\label{eq:Pinsker}
\begin{aligned} \lVert y-\hat{y} \rVert^2_{A} \leq {}&(\lVert \mathbf{g} \rVert^2_2 + \epsilon) \lVert y-\hat{y} \rVert^2_2 \leq (\lVert \mathbf{g} \rVert^2_2 + \epsilon) \lVert y-\hat{y} \rVert^2_1\\ \leq {}&2 (\lVert \mathbf{g} \rVert^2_2 + \epsilon) \mathrm{KL}(\hat{y},y) = 2 (\lVert \mathbf{g} \rVert^2_2 + \epsilon) D_{\phi}(\hat{y},y). \end{aligned}
\end{equation}
By \cref{thm:A2}, \cref{assump: A2} holds with constant 
$$ C_{f,\phi}\leq \frac{L_{\fb}(A)}{\mu_{\phi}(A)} = \lVert \mathbf{g}\rVert_2^2 + \epsilon \to \lVert \mathbf{g}\rVert_2^2. $$
Since $\epsilon$ is arbitrary, we take the limit $\epsilon\to 0+$ to get a tighter upper bound. $\Box$
\end{example}

Choosing a mirror function $\phi$ that enforces feasibility allows the method to handle constrained optimization naturally.

\begin{example}[Max-margin model]\label{ex:max-margin}\rm
We consider the max-margin dual model from~\cite{Collins2008}:
$$ \min_{x \in \Delta^d} f(x)= b^{\top}x + \frac12 x^{\top} A x, $$
where $\Delta^d$ is the probability simplex, $b\in\mathbb R^d$, and $A$ is SPD. We use Shannon entropy $\phi(x)=\sum_{i=1}^d x_i\log x_i$ as the mirror function to keep iterates $y_k$ in $\Delta^d$. In the relatively convex case $\mu=0$, we have $L_{\fb}(A)=L_f(A)= 1$. Similar to~\eqref{eq:Pinsker}, $\mu_\phi(A)=1/\|A\|_2$, so 
$$ C_{f,\phi}=\|A\|_2=\lambda_{\max}(A), $$
which the power method can estimate. We extend Acc-MD to the case $\mu=0$ in \cref{alg: restart perturbed VOS}. $\Box$
\end{example}

In Examples \ref{ex: log-linear} and \ref{ex:max-margin}, $\fb$ is quadratic and admits a global metric $A$. For $\mathcal C^2$ mirror functions, the Hessians $\nabla^2\phi(x)$ and $\nabla^2\phi(y)$ usually differ. We use continuity to bound this difference.

We assume $\phi$ has a Lipschitz continuous Hessian in the ball $B(x^{\star}, R/2)$ with constant $L_{\nabla^2\phi}(R)$. Specifically, $\phi \in \mathcal C^{2, \theta}(B(x^{\star}, R/2))$ if the Hessian is H{\"o}lder continuous with parameter $\theta \in [0,1]$: 
$$ \|\nabla^2 \phi(x) - \nabla^2 \phi(y)\|_{*} \leq L_{\nabla^2 \phi} \|x - y\|^{\theta}, \quad \forall x, y \in V. $$

\begin{theorem}\label{th:C2}
Suppose $\phi$ is $1$-strongly convex and \cref{assump: A1} holds. If $\phi \in \mathcal C^{2, \theta}(B(x^{\star},R/2))$ for $0\le \theta \le 1$, then \cref{assump: A2} holds locally for $(x,\hat x,y,\hat y)\in B(x^{\star},R/2)$ with
$$ C_{f,\phi}\le (L-\mu)\bigl(1+L_{\nabla^2\phi}(R)R^{\theta}\bigr), $$
where $L_{\nabla^2\phi}$ is the H\"older constant of $\nabla^2\phi$ on $B(x^{\star},R/2)$. If $\phi$ is quadratic, then $L_{\nabla^2\phi}=0$ and $C_{f,\phi}\le L-\mu$.
\end{theorem}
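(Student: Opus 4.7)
The plan is to reduce to Theorem~\ref{thm:A2} by choosing a constant preconditioner $A = \nabla^2\phi(\xi)$ at a well-chosen reference point $\xi \in B(x^\star, R/2)$ and then using the H\"older continuity of $\nabla^2\phi$ to transfer estimates between the varying Hessian and the constant $A$.

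First, I would set $A = \nabla^2\phi(\xi)$ with $\xi \in B(x^\star, R/2)$ (e.g.\ $\xi = x^\star$). By $1$-strong convexity, $A \succeq I$, and by H\"older continuity on the diameter-$R$ ball one has $\|\nabla^2\phi(z) - A\| \le L_{\nabla^2\phi}(R) R^\theta$ for every $z \in B(x^\star, R/2)$; combined with $A \succeq I$ this yields the multiplicative bound $\nabla^2\phi(z) \preceq (1 + L_{\nabla^2\phi}(R)R^\theta)\,A$. Next, invoking the infinitesimal form of relative smoothness $\nabla^2 \fb \preceq (L-\mu)\nabla^2\phi$ together with the integral representation of the Bregman divergence shows that $\fb$ is $(L-\mu)(1 + L_{\nabla^2\phi}(R)R^\theta)$-smooth in the $\|\cdot\|_A$ norm; co-coercivity then gives
\[
\|\nabla \fb(x) - \nabla \fb(\hat x)\|_{A^{-1}}^2 \le 2(L-\mu)\bigl(1 + L_{\nabla^2\phi}(R)R^\theta\bigr)\, D_{\fb}(x, \hat x).
\]
Finally, I would close with Cauchy--Schwarz in the $A/A^{-1}$ duality and convert $\|y - \hat y\|_A^2$ into $D_\phi(\hat y, y)$ via the $1$-strong convexity, recovering \eqref{eq: A2} with the claimed constant. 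For the quadratic case, $L_{\nabla^2\phi} = 0$ makes $\nabla^2\phi$ constant, so $A = \nabla^2\phi$ globally and the argument reduces to Theorem~\ref{thm:A2} with $L_{\fb}(A)/\mu_\phi(A) = L - \mu$.

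The hard part will be to keep the H\"older correction linear in $L_{\nabla^2\phi}(R)R^\theta$ rather than quadratic. A naively symmetric use of the multiplicative bound on both sides---once for $\fb$'s smoothness (converting $D_\phi$ into $\|\cdot\|_A^2$) and once for $\phi$'s strong convexity (converting $\|\cdot\|_A^2$ back into $D_\phi$)---would square the correction factor. The resolution is to exploit $\phi$'s Euclidean $1$-strong convexity directly on the $\|y - \hat y\|$ side, so the H\"older term enters only once through $\fb$'s smoothness estimate, producing the claimed linear bound. Calibrating $\xi$ so that $\nabla^2\phi(\xi)$ attains the tight $1$-strong convexity then gives $A \preceq (1 + L_{\nabla^2\phi}(R)R^\theta)\,I$, which is precisely the form needed for the conclusion.
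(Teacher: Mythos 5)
Your overall strategy---pick a single SPD metric $A$ built from $\nabla^2\phi$, show $\fb$ is $(L-\mu)(1+L_{\nabla^2\phi}R^\theta)$-smooth in $\|\cdot\|_A$ via H\"older continuity, and close with the Cauchy--Schwarz argument of \cref{thm:A2}---is exactly the paper's, and you correctly identify the danger of paying the H\"older correction twice. The gap is in how you avoid paying it twice. The paper's choice of $A$ is not a fixed reference Hessian such as $\nabla^2\phi(x^\star)$: it is $A=\nabla^2\phi(\xi_y)$ at the mean-value point $\xi_y\in[\hat y,y]$ supplied by Taylor's theorem, so that $2D_\phi(\hat y,y)=\|y-\hat y\|_A^2$ holds as an \emph{identity} and the $y$-side costs nothing; the correction $1+L_{\nabla^2\phi}R^\theta$ then enters only once, when the $x$-side Hessian $H_x=\nabla^2\phi(\xi_x)$ is compared to $A$, using $A\ge I$ to absorb $\|H_x-A\|\,\|x-\hat x\|^2\le L_{\nabla^2\phi}R^\theta\,\|x-\hat x\|_A^2$.

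Your proposed resolution does not close this gap. With $A=\nabla^2\phi(x^\star)$ fixed, converting $\|y-\hat y\|_A^2$ into $2D_\phi(\hat y,y)$ requires $A\le\nabla^2\phi(\xi_y)$, which nothing you assume implies; falling back on Euclidean $1$-strong convexity instead requires an upper bound $A\le cI$, and your claim that calibration yields $A\le(1+L_{\nabla^2\phi}(R)R^\theta)I$ is false: $1$-strong convexity is a one-sided bound, and already for the anisotropic quadratic $\phi(x)=\tfrac12(x_1^2+Mx_2^2)$ with $M$ large one has $L_{\nabla^2\phi}=0$ yet $\nabla^2\phi(\xi)\not\le I$ at every $\xi$, so your route would produce $C_{f,\phi}\le M(L-\mu)$ rather than the claimed $L-\mu$. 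Repairing the $y$-side with the H\"older bound instead reintroduces a second factor of $(1+L_{\nabla^2\phi}R^\theta)$, i.e.\ exactly the squared correction you were trying to avoid. The fix is the paper's: let $A$ ride with the pair $(\hat y,y)$ rather than sit at a fixed reference point.
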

\begin{proof}
Fix $(x,\hat x,y,\hat y)\in B(x^\star,R/2)$. By Taylor's theorem, there exist points
$\xi_y$ on the segment $[\hat y,y]$ and $\xi_x$ on $[\hat x,x]$ such that
\[
2 D_\phi(\hat y,y) = \|y-\hat y\|_{A}^2,
\qquad
\langle \nabla\phi(x)-\nabla\phi(\hat x),\, x-\hat x \rangle
= \|x-\hat x\|_{H_x}^2,
\]
where
\[
A := \nabla^2\phi(\xi_y),
\qquad
H_x := \nabla^2\phi(\xi_x).
\]
Since $\phi$ is $1$-strongly convex, we have $A \geq I$, and therefore
$\|v\| \le \|v\|_{A}$ for all $v$.
The H\"older continuity of $\nabla^2\phi$ on $B(x^\star, R/2)$ ensures $$\|H_x - A\| \le L_{\nabla^2\phi} \|\xi_x - \xi_y\|^\theta \le L_{\nabla^2\phi} R^\theta.$$ It follows that
$$ \|x-\hat x\|_{H_x}^2 \le \|x-\hat x\|_A^2 + \|H_x - A\| \|x-\hat x\|^2 \le (1+L_{\nabla^2\phi}R^\theta) \|x-\hat x\|_A^2. $$
By relative smoothness, $\langle \nabla\fb(x)-\nabla\fb(\hat x), x-\hat x \rangle \le (L-\mu) \|x-\hat x\|_{H_x}^2$. Combining these bounds implies $\fb$ is $L_{\fb}(A)$-smooth in the $\|\cdot\|_A$ norm with
$$ L_{\fb}(A) \le (L-\mu)(1+L_{\nabla^2\phi}R^\theta). $$
The result then follows from co-convexity and the Cauchy--Schwarz argument used in \cref{thm:A2}.
\end{proof}

In practice, the intermediate points $\xi_x$ and $\xi_y$ from the Taylor expansion are unknown. To operationalize \cref{th:C2}, one can use a practical adaptive estimate for the GCS constant:
\begin{equation}\label{eq:practicalC}
C_{f,\phi} \approx (L- \mu) \bigl(1 + L_{\nabla^2\phi}(R)\| x_k - y_k\|^\theta\bigr).
\end{equation}
Alternatively, using  $L$ as a global estimate for $C_{f,\phi}$, i.e., 
\begin{equation}\label{eq:CL}
C_{f,\phi} \approx L.
\end{equation}
provides a robust and conservative upper bound that ensures algorithmic stability by bypassing the need to compute sensitive local Hessian curvature parameters or H\"older constants, which is particularly advantageous during the transient phase when iterates are far from the optimum; see Numerical Example \ref{sec:quartic}.

A similar result can be obtained by assuming that $\fb\in \mathcal C^{2,\theta}(B(x^\star,R/2))$. The proof is slightly involved as $\fb$ may not be strongly convex. 

\begin{theorem} \label{thm:A2C2fb}
Suppose $\phi$ is $1$-strongly convex and \cref{assump: A1} holds. If $f-\mu\phi\in \mathcal C^{2,\theta}(B(x^\star,R/2))$, then \cref{assump: A2} holds locally for any $(x,\hat x,y,\hat y)\in B(x^\star,R/2)$ with
$$ C_{f,\phi}\le 3\bigl((L-\mu)+2L_{\nabla^2\fb}(R)R^\theta\bigr). $$
\end{theorem}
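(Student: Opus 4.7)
The plan is to mirror the proof of \cref{th:C2}, swapping the roles of $\fb$ and $\phi$: here $\fb$ has the H\"older-continuous Hessian while $\phi$ is only $1$-strongly convex. Fix $(x,\hat x,y,\hat y)\in B(x^\star,R/2)$ and write $\nabla\fb(x)-\nabla\fb(\hat x) = H_x(x-\hat x)$ with
\[
H_x := \int_0^1 \nabla^2\fb(\hat x + t(x-\hat x))\,\dd t,
\]
which is positive semidefinite by convexity of $\fb$. The Cauchy--Schwarz inequality in the PSD semi-inner product $(u,v)_{H_x}:=\langle H_x u, v\rangle$ then gives
\[
|\langle \nabla\fb(x) - \nabla\fb(\hat x),\, y - \hat y\rangle| \le \|x - \hat x\|_{H_x}\cdot\|y - \hat y\|_{H_x}.
\]

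By Taylor's theorem applied to $\fb$, $\|x - \hat x\|_{H_x}^2 = D_\fb(x,\hat x) + D_\fb(\hat x,x)$, and the H\"older continuity of $\nabla^2\fb$ on $B(x^\star,R/2)$ yields the symmetrization $|D_\fb(\hat x,x) - D_\fb(x,\hat x)| \le c_\theta\, L_{\nabla^2\fb}(R)\, R^\theta\, \|x - \hat x\|^2$ for an absolute constant $c_\theta$. For the $y$-factor, I transplant the averaged Hessian from the segment $[\hat x,x]$ onto $[\hat y,y]$ via a pointwise H\"older bound, obtaining
\[
\|y - \hat y\|_{H_x}^2 \le \|y - \hat y\|_{H_y}^2 + L_{\nabla^2\fb}(R)\, R^\theta\, \|y - \hat y\|^2,
\]
where $H_y$ is the averaged Hessian over $[\hat y,y]$. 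Applying Taylor again, the relative smoothness $D_\fb \le (L-\mu) D_\phi$, a second H\"older symmetrization for the $(\hat y,y)$ pair, and the $1$-strong convexity bound $\|y-\hat y\|^2 \le 2 D_\phi(\hat y, y)$ collapses this into $2\bigl[(L-\mu) + 2 L_{\nabla^2\fb}(R) R^\theta\bigr] D_\phi(\hat y, y)$. Squaring and multiplying the two seminorm bounds, and absorbing the accumulated symmetrization constants into a single factor of $3$, produces the GCS inequality with $C_{f,\phi} \le 3\bigl((L-\mu) + 2 L_{\nabla^2\fb}(R) R^\theta\bigr)$.

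The main obstacle is that, unlike in \cref{th:C2}, the $1$-strong convexity of $\phi$ alone does not yield an exact Taylor identity $\|y-\hat y\|_A^2 = 2 D_\phi(\hat y, y)$ for some specific Hessian $A$; one must rely only on the weaker pointwise bound $\|y - \hat y\|^2 \le 2 D_\phi(\hat y, y)$. Moreover, since $\fb$ is not assumed strongly convex, the symmetrization error $L_{\nabla^2\fb}(R) R^\theta \|x-\hat x\|^2$ on the $x$-side cannot be directly absorbed into $D_\fb(x,\hat x)$; it must instead be balanced against the $y$-side estimate through the product form of the bound, using relative smoothness to keep the right-hand side proportional to $D_\fb(x,\hat x)\,D_\phi(\hat y,y)$. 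Reconciling these two features is what forces the factor $3$ and the extra $2L_{\nabla^2\fb}(R) R^\theta$ in the final constant.
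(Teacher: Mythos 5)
Your overall architecture (Cauchy--Schwarz in a Hessian-induced metric, Hölder transplantation of Hessians between the segments $[\hat x,x]$ and $[\hat y,y]$, relative smoothness plus $1$-strong convexity of $\phi$ on the $y$-side) matches the paper's, but your choice of metric creates a gap on the $x$-side that your proposed "balancing through the product form" does not close. The semi-inner-product Cauchy--Schwarz forces the $x$-factor to be $\|x-\hat x\|_{H_x}^2 = D_{\fb}(x,\hat x)+D_{\fb}(\hat x,x)$, and since $\fb$ is not strongly convex, the reversed divergence $D_{\fb}(\hat x,x)$ is \emph{not} bounded by a constant multiple of $D_{\fb}(x,\hat x)$: take the convex $\mathcal C^{2,1}$ function $\fb(t)=(t_+)^3$ with $\hat x=-a<0$ fixed and $x=b\to 0^+$, for which $D_{\fb}(x,\hat x)=b^3$ while $D_{\fb}(\hat x,x)=2b^3+3ab^2$, so the ratio blows up. Your fallback bound $|D_{\fb}(\hat x,x)-D_{\fb}(x,\hat x)|\le c_\theta L_{\nabla^2\fb}R^\theta\|x-\hat x\|^2$ is true but fatal: after multiplying by the $y$-side bound you are left with a term proportional to $\|x-\hat x\|^2\,D_{\phi}(\hat y,y)$, and in the same example $\|x-\hat x\|^2\approx a^2$ stays bounded away from zero while $D_{\fb}(x,\hat x)=b^3\to 0$, so no finite $C_{f,\phi}$ in \eqref{eq: A2} can absorb it. Relative smoothness cannot rescue this, since it only converts $D_{\fb}$ into $D_\phi$, not $\|x-\hat x\|^2$ into $D_{\fb}(x,\hat x)$.

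The paper avoids the symmetrized divergence entirely. It applies the dual-pairing Cauchy--Schwarz $|\langle g, v\rangle|\le\|g\|_{A^{-1}}\|v\|_A$ with the \emph{regularized} metric $A=A_2+\varepsilon I$, where $A_2=\nabla^2\fb(\zeta)$ is the intermediate Hessian satisfying $2D_{\fb}(x,\hat x)=\|x-\hat x\|_{A_2}^2$. The regularization makes $A$ invertible even where $\fb$ is degenerate, and since $\nabla^2\fb\preceq A_2+L_{\nabla^2\fb}R^\theta I\preceq(1+\varepsilon^{-1}L_{\nabla^2\fb}R^\theta)A$ on the ball, co-coercivity of $\fb$ in the $\|\cdot\|_{A^{-1}}$ norm gives $\|\nabla\fb(x)-\nabla\fb(\hat x)\|_{A^{-1}}^2\le 2(2+\varepsilon^{-1}L_{\nabla^2\fb}R^\theta)\,D_{\fb}(x,\hat x)$ directly --- the correctly ordered divergence, with no $\|x-\hat x\|^2$ remainder. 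The price is paid on the $y$-side, where $\|y-\hat y\|_A^2$ picks up $(L_{\nabla^2\fb}R^\theta+\varepsilon)\|y-\hat y\|^2$, harmless because $1$-strong convexity of $\phi$ turns $\|y-\hat y\|^2$ into $2D_\phi(\hat y,y)$. Choosing $\varepsilon=L_{\nabla^2\fb}R^\theta$ balances the two sides and yields the stated constant. If you want to repair your argument, replace the $H_x$ semi-norm split by this dual-norm split with an $\varepsilon$-regularized reference metric and invoke co-coercivity; the rest of your $y$-side reasoning then goes through essentially as you wrote it.
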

\begin{proof}
By Taylor's theorem, there exist points $\xi, \zeta$ on the line segment
$[\hat{x}, x]$ and a point $\eta$ on $[\hat{y}, y]$ such that
\[
\nabla \fb(x) - \nabla \fb(\hat{x}) = A_1 (x - \hat{x}), \
2D_{\fb}(x, \hat{x}) = \|x - \hat{x}\|_{A_2}^2, \
2D_{\fb}(y, \hat{y}) = \|y - \hat{y}\|_{A_3}^2,
\]
where $A_1 = \nabla^2 \fb(\xi)$, $A_2 = \nabla^2 \fb(\zeta)$, and
$A_3 = \nabla^2 \fb(\eta)$.

Setting $A = A_2 + \varepsilon I$ for $\varepsilon > 0$, the Cauchy--Schwarz inequality implies
$$ \dual{\nabla\fb(x) - \nabla\fb(\hat{x}), y - \hat{y}} \leq \|\nabla\fb(x) - \nabla\fb(\hat{x})\|_{A^{-1}} \|y - \hat{y}\|_A. $$
H\"older continuity on $B(x^\star, R/2)$ ensures $\|A - A_3\| \leq L_{\nabla^2\fb}R^\theta + \varepsilon$. Using relative smoothness and the strong convexity of $\phi$, we have $$\|y - \hat{y}\|_A^2 \leq 2((L - \mu) + L_{\nabla^2\fb}R^\theta + \varepsilon)D_\phi(\hat{y}, y).$$

Next, co-coercivity of $\fb$ in the $\|\cdot\|_{A^{-1}}$ norm gives $$\|\nabla\fb(x) - \nabla\fb(\hat{x})\|_{A^{-1}}^2 \leq 2(2 + \varepsilon^{-1}L_{\nabla^2\fb}R^\theta)D_{\fb}(x, \hat{x}).$$ Combining these two bounds and setting the regularization parameter $\varepsilon = L_{\nabla^2\fb}R^\theta$ yields the desired constant $C_{f,\phi}$.
\end{proof}

\begin{example}\rm
Let $f(x) = 2x \log x$ for $x \le 1$ and $x \log x + 2x - \log x - 2$ for $x > 1$, with $\phi(x) = x \log x$ on $\mathbb{R}_+$. Direct calculation shows $f, \phi \in \mathcal{C}^2(\mathbb{R}_+)$ and \cref{assump: A1} holds with $L=2$ and $\mu=1$. Then
$$ \fb(x) = \begin{cases} x \log x, & 0 < x \le 1, \\ 2x - \log x - 2, & x > 1. \end{cases} $$
The global GCS condition \eqref{eq: A2} fails. As $x \to 0^+$, we have $\nabla \fb(x) = \log x + 1 \to -\infty$, but the divergence remains finite:
$$ D_{\fb}(x,1) = \fb(x) - \fb(1) - \fb'(1)(x-1) \to 1. $$
However, the local version of \cref{assump: A2} holds near the minimizer $x^\star = 1/e$ for $R \le 1/e$. This follows from Theorem \ref{thm:A2C2fb} because $\phi$ is strongly convex and both functions are smooth on $B(x^\star, R)$. $\Box$
\end{example}

\paragraph{Comparison between \cref{assump: A2} and TSE}
In~\cite{Hanzely2021}, $\phi$ has triangle scaling exponent $\gamma$ (TSE-$\gamma$) if:
\begin{equation}\label{eq:TSE}
D_\phi\bigl((1-\theta)x+\theta y,\,(1-\theta)x+\theta \tilde y\bigr) \le \theta^\gamma D_\phi(y,\tilde y), \qquad \forall \theta\in[0,1].
\end{equation}
ABPG achieves $\mathcal O(1/k^\gamma)$ convergence. For the Shannon entropy, $\gamma=1$ and ABPG does not accelerate. 

The notion of {\it intrinsic} TSE~\cite{Hanzely2021} scales~\eqref{eq:TSE} by a constant $C(x,y,\tilde y)$. Convex, twice differentiable mirror functions satisfy this with $\gamma_{\textbf{in}}=2$, yielding the ABPG rate $$ f(x_{k+1})-f(x^\star)=\mathcal O(C_k/k^2), $$ where $C_k$ depends on the trajectory. It is similarly to our local version of \cref{assump: A2}; see Theorem~\ref{th:C2}. 

As GCS and TSE capture different geometric structures, neither generally implies the other.  This relationship is summarized in Fig. \ref{fig:diagram}.

\begin{figure}[htbp]
\begin{center}
\includegraphics[width=3.6in]{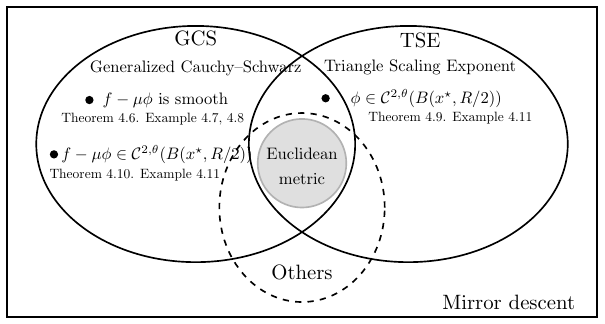}
\caption{Comparison of assumptions. For the entropy mirror, Acc-MD achieves acceleration under GCS, whereas ABPG fails due to the lack of a suitable TSE.}
\label{fig:diagram}
\end{center}
\end{figure}

\section{Extension to convex optimization}\label{sec: extension to convex opt}
For convex objectives ($\mu = 0$), we consider the following perturbed flow:
\begin{equation}\label{eq:perturbed AMD}
	\left\{
	\begin{aligned}
		x^{\prime} &= y - x, \\
		(\nabla \phi(y))^{\prime} &= - \epsilon^{-1}\nabla f(x) + \nabla \phi(x) - \nabla \phi(y),
	\end{aligned}
	\right.
\end{equation}
for a fixed perturbation level $\epsilon > 0$. In dual variables, this system becomes
$$ x' = y - x, \quad \eta' = -\epsilon^{-1} \nabla f(x) + \chi - \eta, $$
where $\chi = \nabla \phi(x)$ and $\eta = \nabla \phi(y)$. Due to the variable splitting, the perturbation does not change the stational points of the original system.

Consider the Lyapunov function
\begin{equation}\label{eq:perturbed lya}
   \mathcal E_{\epsilon}(x,\eta)
   := D_{f}(x,x^{\star}) + \epsilon\, D_{\phi^*}(\eta,\chi^{\star}),
\end{equation}
where $\epsilon>0$ is a fixed parameter. Taking derivatives with respect to $x$ and $\eta$ gives a modified energy dissipation identity.

\begin{lemma}[Perturbed strong Lyapunov property]\label{lem:pertrubed Lya property}
Let $\mathcal E_\epsilon$ be the Lyapunov function in~\eqref{eq:perturbed lya}, and let $\mathcal G$ be the vector field of the perturbed accelerated mirror descent flow~\eqref{eq:perturbed AMD}. Then:
\begin{equation}\label{eq:pertrubed Lya property}
-\nabla \mathcal{E}_\epsilon(x,\eta)\cdot \mathcal{G}(x,\eta)
= \mathcal{E}_\epsilon(x,\eta)
+ D_f(x^{\star},x)
+ \epsilon D_{\phi^*}(\chi,\eta)
- \epsilon D_{\phi^*}(\chi,\chi^{\star}).
\end{equation}
\end{lemma}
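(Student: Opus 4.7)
The statement is an algebraic identity relating the Lyapunov gradient to the vector field, so my plan is a direct calculation patterned after the proof of the unperturbed strong Lyapunov property \eqref{eq:stLy}. The key observation is that setting $\mu = 0$ eliminates the negative shift on $\phi$ in the Lyapunov function, so the mass formerly carried by $\mu D_\phi(y, x^\star)$ must be accounted for via the additional $\chi - \eta$ term in the perturbed flow \eqref{eq:perturbed AMD}. This produces a signed residual contribution $-\epsilon D_{\phi^*}(\chi, \chi^\star)$ that cannot be absorbed into a sum of nonnegative Bregman divergences.

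First I would compute the partial derivatives of $\mathcal{E}_\epsilon$ using \eqref{eq:gradD} together with the optimality condition $\nabla f(x^\star) = 0$, yielding $\partial_x \mathcal{E}_\epsilon = \nabla f(x)$ and $\partial_\eta \mathcal{E}_\epsilon = \epsilon(\nabla \phi^*(\eta) - \nabla \phi^*(\chi^\star)) = \epsilon(y - x^\star)$. Substituting these together with $\mathcal{G}(x,\eta) = (y - x,\, -\epsilon^{-1}\nabla f(x) + \chi - \eta)$ into $-\nabla \mathcal{E}_\epsilon \cdot \mathcal{G}$, the cross factor $\epsilon \cdot \epsilon^{-1}$ cancels, and the two gradient pieces telescope to $\langle \nabla f(x), x - x^\star\rangle + \epsilon \langle y - x^\star, \eta - \chi\rangle$.

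Next I would apply the symmetrization of the Bregman divergence to the first inner product to obtain $\langle \nabla f(x) - \nabla f(x^\star), x - x^\star\rangle = D_f(x, x^\star) + D_f(x^\star, x)$. For the second inner product I rewrite $y = \nabla \phi^*(\eta)$ and $x^\star = \nabla \phi^*(\chi^\star)$ and invoke the three-point identity \eqref{eq:Bregmanidentity} applied to $\phi^*$ with arguments $(\eta, \chi^\star, \chi)$, which gives
\[
\langle \nabla \phi^*(\eta) - \nabla \phi^*(\chi^\star), \eta - \chi\rangle = D_{\phi^*}(\eta, \chi^\star) + D_{\phi^*}(\chi, \eta) - D_{\phi^*}(\chi, \chi^\star).
\]
Collecting the two nonnegative divergences $D_f(x, x^\star) + \epsilon D_{\phi^*}(\eta, \chi^\star)$ reconstructs exactly $\mathcal{E}_\epsilon(x, \eta)$, and the remaining three terms produce the right-hand side of \eqref{eq:pertrubed Lya property}.

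The calculation itself is routine; the real subtlety lies in the interpretation rather than the derivation. The presence of the signed term $-\epsilon D_{\phi^*}(\chi, \chi^\star)$ means this identity does not immediately yield Gr\"onwall-style exponential decay as in \eqref{eq:stability}, which is precisely why a fixed $\epsilon>0$ can only give sublinear decay. I expect the subsequent analysis to handle this obstruction either by bounding $D_{\phi^*}(\chi,\chi^\star)$ against $\mathcal{E}_\epsilon$ locally, or by a homotopy/restart procedure in which $\epsilon$ is progressively decreased across stages, consistent with the perturbation--homotopy framework referenced earlier in the introduction.
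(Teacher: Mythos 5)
Your proposal is correct and follows essentially the same route as the paper: compute $-\nabla\mathcal{E}_\epsilon\cdot\mathcal{G}$ directly, reduce the $f$-term to the symmetric Bregman divergence $D_f(x,x^\star)+D_f(x^\star,x)$ via $\nabla f(x^\star)=0$, and expand the $\phi^*$-term with the three-point identity \eqref{eq:Bregmanidentity} applied to $(\eta,\chi^\star,\chi)$. Your closing remarks on the signed term $-\epsilon D_{\phi^*}(\chi,\chi^\star)$ and the homotopy strategy accurately anticipate how the paper uses this lemma, but they are not part of the proof itself.
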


\begin{proof}
Direct calculation gives:
$$ \begin{aligned} - \nabla \mathcal{E}_\epsilon(x,\eta)\cdot \mathcal{G}(x,\eta) = {}& \langle \nabla f(x)-\nabla f(x^{\star}),\, x-x^{\star} \rangle + \epsilon\, \langle \nabla \phi^*(\eta)-\nabla \phi^*(\chi^{\star}),\, \eta-\chi \rangle . \end{aligned} $$
The first term is the symmetric Bregman divergence of $f$,
$$ \langle \nabla f(x)-\nabla f(x^{\star}),\, x-x^{\star} \rangle = D_f(x,x^{\star}) + D_f(x^{\star},x). $$
Using the three-point identity, the second term expands to
$$ \epsilon\, \langle \nabla \phi^*(\eta)-\nabla \phi^*(\chi^{\star}),\, \eta-\chi \rangle = \epsilon ( D_{\phi^*}(\eta,\chi^{\star}) + D_{\phi^*}(\chi,\eta) - D_{\phi^*}(\chi,\chi^{\star}) ). $$
Combining these with the definition~\eqref{eq:perturbed lya} of $\mathcal E_\epsilon$ yields~\eqref{eq:pertrubed Lya property}.
\end{proof}

The perturbed forward Acc-MD method is stated in \cref{alg:AccMD-perturbed-iteration}.

\begin{algorithm}
\caption{Perturbed Accelerated Mirror Descent Method}
\label{alg:AccMD-perturbed-iteration}
\linespread{1.125}\selectfont
\begin{algorithmic}[1]
 \STATE \textbf{Input:} $x_0, y_0 \in \mathbb{R}^n$, parameters $\epsilon > 0, C_{f,\phi} > 0.$ 

\STATE Set $\alpha =\sqrt{\epsilon/{C}_{f,\phi}}.$

\FOR{$k=0, 1, 2, \dots$}
\STATE $\displaystyle   x_{k+1} = \frac{1}{1+\alpha} (x_{k} + \alpha y_{k} ).$
\STATE $\displaystyle g_k = \alpha \nabla \phi(x_{k+1}) + \nabla \phi(y_k) - \frac{\alpha}{\epsilon} (2\nabla f(x_{k+1}) - \nabla f(x_k)).$

\STATE $\displaystyle y_{k+1}=\arg\min_{y \in \mathbb{R}^n} [ (1 + \alpha)\, \phi(y) - \langle g_k,\, y \rangle ]$.
\ENDFOR
\end{algorithmic}
\end{algorithm}

\paragraph{Convergence Analysis}
Define
\begin{align}
\mathcal{E}_{\epsilon}^\alpha(x,y) &:= D_{f}(x, x^{\star}) + \epsilon D_{\phi^*}(\eta, \chi^{\star}) +\alpha\langle \nabla f(x)-\nabla f(x^{\star}),\,y-x^{\star} \rangle, \label{eq:Eepsalpha}\\
\mathcal B^\alpha(x,\hat x,\hat y,y) &:= D_{f}(x,\hat x) + \epsilon D_{\phi}(\hat y,y) +\alpha\langle \nabla f(x)-\nabla f(\hat x),\,y-\hat y \rangle. \label{eq:Bepsalpha}
\end{align}
Using \cref{lem:pertrubed Lya property}, we obtain the following identity.

\begin{lemma}\label{lem:decay convex}
Let $(x_k,y_k)$ be the sequence from Algorithm~\ref{alg:AccMD-perturbed-iteration}. Then:
$$ \begin{aligned} \mathcal{E}_{\epsilon}^\alpha(x_{k+1},y_{k+1}) -\mathcal{E}_{\epsilon}^\alpha(x_k,y_k) ={}& -\alpha\,\mathcal E_{\epsilon}^{\alpha}(x_{k+1}, y_{k+1}) - \alpha \epsilon D_{\phi}(y_{k+1}, x_{k+1})\\ & + \alpha \epsilon D_{\phi}(x^{\star}, y_{k+1}) + \alpha \epsilon D_{\phi}(x^{\star},x_{k+1}) \\ & -\alpha\,\mathcal B^{-\alpha}(x^\star, x_{k+1},y_{k+1},x^\star) - \mathcal B^{\alpha}(x_k, x_{k+1},y_k,y_{k+1}). \end{aligned} $$
\end{lemma}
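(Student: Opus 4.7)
The plan is to follow the same three-step pattern used in the proof of \cref{lem:identity}, with $(\fb,\mu)$ replaced by $(f,\epsilon)$ and with the perturbed strong Lyapunov identity \eqref{eq:pertrubed Lya property} in place of \eqref{eq:stLy}. The extra signed term $-\epsilon D_{\phi^*}(\chi,\chi^{\star})$ appearing in \eqref{eq:pertrubed Lya property} is precisely what produces the additional $\alpha\epsilon D_{\phi}(x^{\star},\cdot)$ contributions in the conclusion that are absent from the strongly convex case.

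Let $\boldsymbol z_k=(x_k,\eta_k)$. First, I would expand the Bregman divergence of $\mathcal E_\epsilon$ at $\boldsymbol z_{k+1}$,
$$
\mathcal E_\epsilon(\boldsymbol z_{k+1})-\mathcal E_\epsilon(\boldsymbol z_k)=\langle\nabla\mathcal E_\epsilon(\boldsymbol z_{k+1}),\,\boldsymbol z_{k+1}-\boldsymbol z_k\rangle-D_{\mathcal E_\epsilon}(\boldsymbol z_k,\boldsymbol z_{k+1}),
$$
and decode the update of \cref{alg:AccMD-perturbed-iteration} as an implicit Euler step with AOR corrections,
$$
x_{k+1}-x_k=\alpha\mathcal G_1(\boldsymbol z_{k+1})-\alpha(y_{k+1}-y_k),\qquad \eta_{k+1}-\eta_k=\alpha\mathcal G_2(\boldsymbol z_{k+1})-\tfrac{\alpha}{\epsilon}\bigl(\nabla f(x_{k+1})-\nabla f(x_k)\bigr).
$$
Using $\partial_x\mathcal E_\epsilon=\nabla f(x)-\nabla f(x^{\star})$ and $\partial_\eta\mathcal E_\epsilon=\epsilon(y-x^{\star})$, the two correction terms contribute
$$
-\alpha\langle\nabla f(x_{k+1})-\nabla f(x^{\star}),\,y_{k+1}-y_k\rangle-\alpha\langle y_{k+1}-x^{\star},\,\nabla f(x_{k+1})-\nabla f(x_k)\rangle.
$$
The same symmetrization identity used in \cref{lem:identity} recasts this as the telescoping pair $-\alpha\langle\nabla f(x_{k+1})-\nabla f(x^{\star}),y_{k+1}-x^{\star}\rangle+\alpha\langle\nabla f(x_k)-\nabla f(x^{\star}),y_k-x^{\star}\rangle$ plus the residual cross term $-\alpha\langle\nabla f(x_{k+1})-\nabla f(x_k),y_{k+1}-y_k\rangle$. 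Moving the telescoping pair to the left converts the $\mathcal E_\epsilon$ difference into $\mathcal E_\epsilon^\alpha$.

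Next I would expand $\alpha\langle\nabla\mathcal E_\epsilon(\boldsymbol z_{k+1}),\mathcal G(\boldsymbol z_{k+1})\rangle$ by \eqref{eq:pertrubed Lya property} and translate dual divergences via $D_{\phi^*}(\chi_{k+1},\eta_{k+1})=D_\phi(y_{k+1},x_{k+1})$ and $D_{\phi^*}(\chi_{k+1},\chi^{\star})=D_\phi(x^{\star},x_{k+1})$, which supplies $-\alpha\epsilon D_\phi(y_{k+1},x_{k+1})$ and $+\alpha\epsilon D_\phi(x^{\star},x_{k+1})$ together with the primal piece $-\alpha\mathcal E_\epsilon(\boldsymbol z_{k+1})-\alpha D_f(x^{\star},x_{k+1})$. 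Reorganizing the pieces centered at $x^{\star}$, namely $-\alpha D_f(x^{\star},x_{k+1})$, $-\alpha\epsilon D_\phi(y_{k+1},x^{\star})$, and the leftover factor of $\alpha\langle\nabla f(x_{k+1})-\nabla f(x^{\star}),y_{k+1}-x^{\star}\rangle$, reveals $-\alpha\mathcal B^{-\alpha}(x^{\star},x_{k+1},y_{k+1},x^{\star})$. Finally, $D_{\mathcal E_\epsilon}(\boldsymbol z_k,\boldsymbol z_{k+1})=D_f(x_k,x_{k+1})+\epsilon D_\phi(y_{k+1},y_k)$ combined with the residual cross term $-\alpha\langle\nabla f(x_{k+1})-\nabla f(x_k),y_{k+1}-y_k\rangle$ assembles the remaining $-\mathcal B^\alpha(x_k,x_{k+1},\cdot,\cdot)$ quantity.

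The main obstacle is the sign and argument-order bookkeeping forced by the asymmetry of the Bregman divergence. In the perturbed identity, the center $x^{\star}$ appears on both sides (as $D_\phi(x^{\star},x_{k+1})$ from $-\epsilon D_{\phi^*}(\chi,\chi^{\star})$ and as $D_\phi(y_{k+1},x^{\star})$ from the $\mathcal B^{-\alpha}$ term), and the iterate divergence $D_\phi(y_{k+1},y_k)$ must be matched against the Bregman divergence $D_{\phi^*}(\eta_k,\eta_{k+1})$ with the \emph{opposite} order of dual variables. Once each quantity is attributed correctly to $\mathcal E_\epsilon^\alpha$, $\mathcal B^\alpha$, or $\mathcal B^{-\alpha}$, the identity drops out.
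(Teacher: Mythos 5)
Your proposal is correct and follows essentially the same route as the paper's argument: expand the Bregman difference of $\mathcal E_\epsilon$ at $\boldsymbol z_{k+1}$, split the update into an implicit Euler step plus the two AOR corrections, symmetrize the correction terms into a telescoping pair that upgrades $\mathcal E_\epsilon$ to $\mathcal E_\epsilon^\alpha$, invoke \eqref{eq:pertrubed Lya property}, translate the dual divergences via \eqref{eq:DfDf*}, and regroup into the $\mathcal B^{\pm\alpha}$ terms, exactly as in the proof of \cref{lem:identity}. Your bookkeeping in fact yields the orderings $D_\phi(y_{k+1},x^\star)$ and $\mathcal B^\alpha(x_k,x_{k+1},y_{k+1},y_k)$ (consistent with \cref{lem:identity}), whereas the lemma as printed swaps these arguments; since both orderings are nonnegative and enter the subsequent theorem only through upper bounds, this is a typo-level discrepancy in the statement rather than a gap in your argument.
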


\begin{theorem}[Convergence of Perturbed Acc-MD]\label{thm:convergence rate of perturbed Acc-MD dual}
Assume $f$ is convex and \cref{assump: A2} holds with $\mu=0$ and $C_{f,\phi} > 0$. For any $\epsilon > 0$, let $(x_k, y_k)$ be the sequence generated by Algorithm \ref{alg:AccMD-perturbed-iteration} with $\alpha = \sqrt{\epsilon/C_{f,\phi}}$. If there exists $R > 0$ such that $D_{\phi}(x^\star, x_k) \le R/2$ and $D_{\phi}(x^\star, y_k) \le R/2$ for all $k \ge 0$, then:
\begin{equation}\label{eq: perturbed AOR-AMD decay}
\mathcal{E}^{\alpha}_{\epsilon}(x_{k}, y_{k}) \le \left( 1 + \sqrt{\epsilon/C_{f,\phi}} \right)^{-k} \mathcal{E}^{\alpha}_{\epsilon}(x_{0}, y_{0}) + \epsilon R.
\end{equation}
\end{theorem}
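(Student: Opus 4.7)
The plan is to apply the energy identity in Lemma \ref{lem:decay convex}, discard the nonpositive contributions using the GCS condition \eqref{eq: A2}, and then absorb the remaining perturbation residual via the boundedness hypothesis. First I would verify, specializing \eqref{eq: A2} to $\mu=0$ so that $\fb=f$, that both cross terms $\mathcal{B}^{\alpha}(x_k,x_{k+1},y_k,y_{k+1})$ and $\mathcal{B}^{-\alpha}(x^\star,x_{k+1},y_{k+1},x^\star)$ appearing on the right-hand side are nonnegative when $\alpha=\sqrt{\epsilon/C_{f,\phi}}$. The argument is a verbatim copy of Lemma \ref{lem:crossterm} with $\epsilon$ playing the role of $\mu$: GCS and $2ab\le a^2+b^2$ bound $|\alpha\langle \nabla f(x)-\nabla f(\hat x), y-\hat y\rangle|$ by $D_{f}(x,\hat x)+\epsilon D_\phi(\hat y,y)$, which is precisely the content of $\mathcal{B}^{\alpha}\ge 0$. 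The residual term $-\alpha\epsilon D_\phi(y_{k+1},x_{k+1})$ is also nonpositive and can be dropped.

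After discarding these nonpositive contributions, the identity of Lemma \ref{lem:decay convex} collapses to
\begin{equation*}
\mathcal{E}^{\alpha}_{\epsilon}(x_{k+1},y_{k+1}) - \mathcal{E}^{\alpha}_{\epsilon}(x_k,y_k)
\;\le\; -\alpha\,\mathcal{E}^{\alpha}_{\epsilon}(x_{k+1},y_{k+1})
+ \alpha\epsilon\bigl[D_\phi(x^\star,y_{k+1}) + D_\phi(x^\star,x_{k+1})\bigr].
\end{equation*}
This is the essential new feature of the perturbed setting: unlike the strongly convex case in Lemma \ref{lem:identity}, the extra forcing term $\alpha\epsilon D_{\phi^\ast}(\chi,\chi^\star)=\alpha\epsilon D_\phi(x^\star,x)$ produced by the perturbed Lyapunov identity (Lemma \ref{lem:pertrubed Lya property}) does not cancel against the Lyapunov function and leaves an inhomogeneous remainder. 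Invoking the boundedness assumption $D_\phi(x^\star,x_{k+1}),\,D_\phi(x^\star,y_{k+1})\le R/2$ controls the bracket by $R$ and yields the one-step inhomogeneous contraction
\begin{equation*}
(1+\alpha)\,\mathcal{E}^{\alpha}_{\epsilon}(x_{k+1},y_{k+1}) \;\le\; \mathcal{E}^{\alpha}_{\epsilon}(x_k,y_k) + \alpha\epsilon R.
\end{equation*}

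The final step is a routine geometric iteration. With $\beta=(1+\alpha)^{-1}$, unrolling the recursion produces the geometric series $\alpha\epsilon R\sum_{j=1}^{k}\beta^j$, whose sum is at most $\alpha\epsilon R/\alpha = \epsilon R$, yielding exactly \eqref{eq: perturbed AOR-AMD decay}. The main obstacle here is conceptual rather than computational: the hypothesis that the iterates remain in a Bregman ball is assumed, not derived, and a completely self-contained proof would require either an invariance argument for the perturbed scheme or a calibration of $\epsilon$ against the initial data. The irreducible $\epsilon R$ floor the argument leaves behind is exactly what motivates the perturbation--homotopy restart strategy (Algorithm \ref{alg: restart perturbed VOS}) advertised in the introduction: shrinking $\epsilon$ across stages converts this floor into the accelerated $\mathcal{O}(C_{f,\phi}/k^2)$ sublinear rate recorded in Table \ref{tab: MD conv result}.
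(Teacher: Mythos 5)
Your proposal is correct and follows the paper's own route exactly: invoke Lemma \ref{lem:crossterm} (with $\epsilon$ in place of $\mu$) to drop the nonnegative terms $\mathcal B^{\alpha}$ and $\mathcal B^{-\alpha}$ in Lemma \ref{lem:decay convex}, bound the forcing term $\alpha\epsilon\bigl[D_\phi(x^\star,y_{k+1})+D_\phi(x^\star,x_{k+1})\bigr]$ by $\alpha\epsilon R$ via the Bregman-ball hypothesis, and sum the resulting inhomogeneous contraction as a geometric series. Your additional remarks on the unproved invariance of the Bregman ball and the role of the $\epsilon R$ floor are accurate commentary but not part of the paper's (terser) proof.
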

\begin{proof}
For $\alpha = \sqrt{\epsilon/C_{f,\phi}}$, Lemma \ref{lem:crossterm} ensures $\mathcal{B}^\alpha \ge 0$ and $\mathcal{B}^{-\alpha} \ge 0$. Lemma \ref{lem:decay convex} then implies:
$$
\mathcal{E}^\alpha_{\epsilon}(x_{k+1},y_{k+1}) \le \frac{1}{1+\alpha} \mathcal{E}^\alpha_{\epsilon}(x_k,y_k) + \frac{\alpha\epsilon R}{1+\alpha}.
$$
Iterating this inequality and summing the geometric series yields \eqref{eq: perturbed AOR-AMD decay}.
\end{proof}

Since the perturbation preserves the equilibrium $x^\star$, we employ a homotopy strategy that decreases $\epsilon$ to achieve the optimal $\mathcal{O}(C_{f,\phi}/k^2)$ convergence rate (\cref{alg: restart perturbed VOS}).

\begin{algorithm}[htbp]
\caption{Acc-MD with Homotopy Perturbation}
\linespread{1.125}\selectfont
\label{alg: restart perturbed VOS}
\begin{algorithmic}[1]
    \STATE \textbf{Input:} $x_0, y_0$, initial tolerance $\epsilon_0 > 0$, termination $\epsilon > 0$.
    \STATE \textbf{Set} $m_0 = C\epsilon_0^{-1/2}$.
    \FOR{$k=0, 1, 2, \dots$}
        \STATE $\epsilon_{k+1} = \epsilon_k/2, \quad m_{k+1} = \sqrt{2} m_k$.
        \STATE Run Algorithm \ref{alg:AccMD-perturbed-iteration} for $m_{k+1}$ iterations with parameter $\epsilon_{k+1}$ and initial state $(x_k, y_k)$ to obtain $(x_{k+1}, y_{k+1})$.
    \ENDFOR
\end{algorithmic}
\end{algorithm}

Define the Lyapunov function $\mathcal{E}(x, \eta, \epsilon) := D_f(x, x^{\star}) + \epsilon D_{\phi^*}(\eta, \chi^{\star})$. Assuming $D_{\phi}(x^\star, x_0) \le R^2$, we have the following global result. The proof is identical to  \cite[Theorem 8.4]{chen2025accelerated} and thus skipped here.

\begin{theorem}
Under \cref{assump: A2} with $\mu=0$, if $\mathcal E(x_0, \eta_0, \epsilon_0) \le (R^2+1)\epsilon_0$, then Algorithm \ref{alg: restart perturbed VOS} maintains $\mathcal E(x_k, \eta_k, \epsilon_k) \le (R^2+1)\epsilon_k$ for all $k \ge 0$. Furthermore, for $M_k = \sum_{i=1}^k m_i$ total iterations, there exists $c > 0$ such that:
$$
\mathcal E(x_k, \eta_k, \epsilon_k) \le \frac{c C_{f,\phi} (R^2+1)}{M_k^2}.
$$
This corresponds to an iteration complexity of $\mathcal{O}(\sqrt{C_{f,\phi}/\epsilon})$.
\end{theorem}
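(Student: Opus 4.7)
The plan is to prove the invariant by induction on $k$, using \cref{thm:convergence rate of perturbed Acc-MD dual} to control one outer stage at a time, and then to convert the geometric decay of $\epsilon_k$ into a rate in the total iteration count. The base case $k=0$ is the hypothesis $\mathcal{E}(x_0,\eta_0,\epsilon_0)\le(R^2+1)\epsilon_0$. For the inductive step, suppose $\mathcal{E}(x_k,\eta_k,\epsilon_k)\le(R^2+1)\epsilon_k$. Because $\mathcal{E}$ controls both $D_f(x_k,x^\star)$ and $\epsilon_k D_{\phi^*}(\eta_k,\chi^\star)=\epsilon_k D_\phi(x^\star,y_k)$, and since relative smoothness gives $D_f(x_k,x^\star)\le L\,D_\phi(x_k,x^\star)$ (so the symmetrization plus the GCS bound of \cref{lem:crossterm} yields a bound on $D_\phi(x^\star,x_k)$ as well), we obtain that both $D_\phi(x^\star,x_k)$ and $D_\phi(x^\star,y_k)$ are bounded by a quantity of order $R^2/2$. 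This validates the radius hypothesis required to apply \cref{thm:convergence rate of perturbed Acc-MD dual} on the inner loop run at perturbation level $\epsilon_{k+1}$.

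Next, I apply \cref{thm:convergence rate of perturbed Acc-MD dual} to the $m_{k+1}$ inner iterations of \cref{alg:AccMD-perturbed-iteration} with parameter $\epsilon_{k+1}=\epsilon_k/2$ and initial state $(x_k,y_k)$. Writing $\alpha_{k+1}=\sqrt{\epsilon_{k+1}/C_{f,\phi}}$, this gives
\begin{equation*}
\mathcal{E}^{\alpha_{k+1}}_{\epsilon_{k+1}}(x_{k+1},y_{k+1})
\le (1+\alpha_{k+1})^{-m_{k+1}}\,\mathcal{E}^{\alpha_{k+1}}_{\epsilon_{k+1}}(x_k,y_k) + \epsilon_{k+1} R.
\end{equation*}
By \cref{lem:crossterm}, the cross term appearing in $\mathcal{E}^{\alpha_{k+1}}_{\epsilon_{k+1}}$ is dominated by the sum of Bregman terms, so up to a universal constant $\mathcal{E}^{\alpha_{k+1}}_{\epsilon_{k+1}}(x_k,y_k)\lesssim \mathcal{E}(x_k,\eta_k,\epsilon_k)\le(R^2+1)\epsilon_k = 2(R^2+1)\epsilon_{k+1}$. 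The schedule $m_{k+1}=\sqrt{2}\,m_k$ with $m_0=C\epsilon_0^{-1/2}$ yields $m_{k+1}=C\epsilon_{k+1}^{-1/2}$, so $\alpha_{k+1} m_{k+1}=C\sqrt{1/C_{f,\phi}}$ is a constant depending only on the choice of $C$. Picking $C$ large enough makes $(1+\alpha_{k+1})^{-m_{k+1}}\le e^{-\alpha_{k+1}m_{k+1}/2}$ small enough that the first term on the right is at most $(R^2+1)\epsilon_{k+1}/2$, while the additive $\epsilon_{k+1}R$ contributes at most $(R^2+1)\epsilon_{k+1}/2$. Summing preserves the invariant $\mathcal{E}(x_{k+1},\eta_{k+1},\epsilon_{k+1})\le(R^2+1)\epsilon_{k+1}$, once one drops the nonnegative cross term to pass from $\mathcal{E}^{\alpha_{k+1}}_{\epsilon_{k+1}}$ back to $\mathcal{E}$.

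Finally, I convert the invariant into a rate in $M_k$. Since $\epsilon_k=\epsilon_0 2^{-k}$ and $m_k=\sqrt{2}^{\,k}m_0$, the geometric sum $M_k=\sum_{i=1}^{k}m_i$ satisfies $M_k=\Theta(m_k)=\Theta(\epsilon_k^{-1/2})$, hence $\epsilon_k\le c'/M_k^2$ for an absolute constant $c'$. Inserting this into the invariant yields
\begin{equation*}
\mathcal{E}(x_k,\eta_k,\epsilon_k)\le (R^2+1)\epsilon_k\le \frac{c\,C_{f,\phi}(R^2+1)}{M_k^2},
\end{equation*}
where the $C_{f,\phi}$ factor is absorbed into $c$ through the calibration $m_0=C\epsilon_0^{-1/2}$ (with $C$ depending on $C_{f,\phi}$ as forced by the contraction threshold above). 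The complexity statement $\mathcal{O}(\sqrt{C_{f,\phi}/\epsilon})$ follows by solving $M_k^2\ge c C_{f,\phi}(R^2+1)/\epsilon$.

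\textbf{Main obstacle.} The delicate part is the inductive preservation of the radius hypothesis in \cref{thm:convergence rate of perturbed Acc-MD dual}: the bound $D_\phi(x^\star,x_j),D_\phi(x^\star,y_j)\le R/2$ must hold not only at the endpoints $(x_k,y_k)$ and $(x_{k+1},y_{k+1})$ of each stage but throughout all $m_{k+1}$ intermediate inner iterates. I expect to resolve this by a secondary monotonicity argument on $\mathcal{E}^{\alpha_{k+1}}_{\epsilon_{k+1}}$ along the inner loop, using \cref{lem:decay convex}, to confine every intermediate iterate to a ball whose radius is controlled by $(R^2+1)$ uniformly in the outer index $k$, independent of the shrinking $\epsilon_k$. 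This step, plus the careful tracking of universal constants so that the contraction factor dominates the $\epsilon_{k+1}R$ perturbation in the invariant, are the only nontrivial pieces; the iteration-count bookkeeping is routine.
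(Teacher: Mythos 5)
Your overall strategy---induction over the outer stages, applying \cref{thm:convergence rate of perturbed Acc-MD dual} to each stage with $\epsilon_{k+1}=\epsilon_k/2$ and $m_{k+1}=C\epsilon_{k+1}^{-1/2}$ inner steps so that $\alpha_{k+1}m_{k+1}$ is constant, then converting $\epsilon_k=\Theta\bigl(C_{f,\phi}/M_k^2\bigr)$---is exactly the intended homotopy argument; the paper does not write it out but defers to \cite[Theorem 8.4]{chen2025accelerated}. The bookkeeping of $m_k$, $M_k$, and the absorption of $C_{f,\phi}$ into the calibration of $C$ is correct.

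There is, however, a concrete gap at the point where you convert the stage-end bound on the modified energy $\mathcal{E}^{\alpha}_{\epsilon}(x_{k+1},y_{k+1})$ into the invariant for the true energy $\mathcal{E}(x_{k+1},\eta_{k+1},\epsilon_{k+1})$ by ``dropping the nonnegative cross term.'' The cross term $\alpha\langle\nabla f(x)-\nabla f(x^\star),\,y-x^\star\rangle$ in \eqref{eq:Eepsalpha} has no sign. At the threshold $\alpha=\sqrt{\epsilon/C_{f,\phi}}$, \cref{lem:crossterm} only gives $|{\rm cross}|\le D_f(x,x^\star)+\epsilon D_\phi(x^\star,y)=\mathcal{E}$, i.e.\ $0\le\mathcal{E}^{\alpha}_{\epsilon}\le 2\mathcal{E}$, and no reverse bound $\mathcal{E}\le c\,\mathcal{E}^{\alpha}_{\epsilon}$ is available: the modified energy can vanish while $\mathcal{E}$ does not. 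The standard repair is to run each stage with $\alpha=\theta\sqrt{\epsilon/C_{f,\phi}}$ for a fixed $\theta\in(0,1)$, so that $|{\rm cross}|\le\theta\,\mathcal{E}$ and hence $(1-\theta)\,\mathcal{E}\le\mathcal{E}^{\alpha}_{\epsilon}$; this changes only constants and preserves the rate. (Extracting $\alpha\,\mathcal{E}$ from the telescoped difference, as in the proof of \cref{thm:convergence rate of Acc-MD dual}, would instead cost a factor $1/\alpha$ and break the $(R^2+1)\epsilon_k$ invariant.) A second, softer issue is the one you flag yourself: the radius hypothesis of \cref{thm:convergence rate of perturbed Acc-MD dual} must hold at every intermediate inner iterate, and your parenthetical justification is backwards---relative smoothness \eqref{eq: A1} with $\mu=0$ bounds $D_f$ \emph{above} by $D_\phi$, so smallness of $D_f(x_k,x^\star)$ gives no control on $D_\phi(x^\star,x_k)$. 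The invariant directly controls $D_\phi(x^\star,y_k)$ through the $\epsilon D_{\phi^*}$ term, but the bound on the $x$-iterates must come from elsewhere (e.g.\ from $x_{k+1}$ being a convex combination of $x_k$ and $y_k$), and your ``secondary monotonicity argument'' remains a plan rather than a proof.
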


\section{Extension to Composite Optimization}\label{sec: extension to composite opt}
Consider the composite optimization problem:
\begin{equation}
\min_{x \in \mathbb{R}^n} \quad F(x) := f(x) + g(x),
\end{equation}
where $f$ satisfies \cref{assump: A1} and $g$ is convex but possibly non-smooth. This formulation encompasses constrained optimization by letting $g$ be the indicator function of a convex set $K$. We assume $g$ admits a tractable proximal operator with respect to the mirror function $\phi$. Crucially, \eqref{eq: A1} and \eqref{eq: A2} are required only for $f$. 

The composite accelerated mirror descent flow is defined as:
\begin{equation}\label{eq:VOSflow-composite}  
\left\{
\begin{aligned}
    x^{\prime} &= y - x, \\
    -\mu (\nabla \phi(y))^{\prime} + \mu(\nabla \phi(x)-\nabla \phi(y)) - \nabla f(x) &\in \partial g(y).
\end{aligned}
\right.
\end{equation}
In dual variables $\eta = \nabla \phi(y)$, this system simplifies to:
$$ x' = y - x, \qquad \eta' = -\mu^{-1}(\nabla \fb(x)+q(y)) - \eta, $$
where $q(y) \in \partial g(y)$ is a subgradient of $g$. We employ the Lyapunov function \eqref{eq:lya_func} used in the smooth case; the strong Lyapunov property follows by an analogous argument.

The backward Acc-MD scheme for composite optimization is summarized in Algorithm~\ref{alg:AccMD-composite}, and the forward version can be obtained similarly.

\begin{algorithm}
\caption{Accelerated Mirror Descent Method for Composite Optimization}
\label{alg:AccMD-composite}
\linespread{1.325}\selectfont
\begin{algorithmic}[1]
\STATE \textbf{Input:} $x_0, y_0 \in \mathbb{R}^n$, parameters $\mu>0$, $C_{f,\phi}>0$
\STATE \textbf{Set $\alpha =\sqrt{\mu/{C}_{f,\phi}}.$}
\FOR{$k=0, 1, 2, \dots$}

\STATE $y_{k+1} = \arg\min_{y \in \mathbb{R}^n} (1 + \alpha)\, \phi(y) + \frac{\alpha}{\mu} g(y) - \left\langle \alpha \nabla \phi(x_k) + \nabla \phi(y_k) - \frac{\alpha}{\mu} \nabla f(x_k),\, y \right\rangle.$

\STATE $\displaystyle x_{k+1} = \tfrac{1}{1+\alpha} [x_k + \alpha( 2y_{k+1} - y_k) ]$.
\ENDFOR
\end{algorithmic}
\end{algorithm}

As subgradients of $g$ are evaluated implicitly, the non-smooth term $g$ does not contribute to the cross-term error, and convergence follows.

\begin{theorem}[Convergence in the Composite Setting]\label{thm:convergence rate of Acc-MD dual comp}
Let $g$ be convex and $f$ be $\mu$-relatively convex with respect to $\phi$ ($\mu > 0$). If \cref{assump: A2} holds with constant $C_{f,\phi} > 0$, the sequence $(x_k, y_k)$ generated by Algorithm~\ref{alg:AccMD-composite} with $\alpha = \sqrt{\mu/C_{f,\phi}}$ satisfies:
$$ D_{\fb}(x_{k+1}, x^{\star}) + \mu D_{\phi^*}(\eta_{k+1}, \chi^{\star}) \leq C_0 \left( 1 + \sqrt{\mu/C_{f,\phi}} \right)^{-k}, \quad k \geq 1, $$
where $C_0$ is a constant depending on the initial conditions.
\end{theorem}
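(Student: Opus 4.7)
The plan is to mirror the analysis of \cref{thm:convergence rate of Acc-MD dual} while treating the subgradient of $g$ implicitly. First I would fix a Lagrange multiplier $q^{\star} \in \partial g(x^{\star})$ with $\nabla f(x^{\star}) + q^{\star} = 0$, so the equilibrium of \eqref{eq:VOSflow-composite} satisfies $\nabla \fb(x^{\star}) + q^{\star} = -\mu\chi^{\star}$. With the same Lyapunov function \eqref{eq:lya_func}, repeating the computation that produced \eqref{eq:stLy} yields the composite strong Lyapunov property
\[
    -\nabla \mathcal E(x,\eta)\cdot \mathcal G(x,\eta)
    = \mathcal E(x,\eta) + D_{\fb}(x^{\star},x) + \mu D_{\phi}(y,x^{\star})
     + \langle y - x^{\star},\, q(y) - q^{\star}\rangle,
\]
and the extra cross term is non-negative by monotonicity of $\partial g$, so the Lyapunov dissipation is preserved.

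Next I would reproduce the discrete identity of \cref{lem:identity}. The optimality condition for the $y_{k+1}$-update in \cref{alg:AccMD-composite} can be rewritten as the implicit-Euler step
\[
    \frac{\nabla \phi(y_{k+1}) - \nabla \phi(y_k)}{\alpha}
    = -\nabla\phi(y_{k+1}) - \frac{1}{\mu}\bigl(\nabla \fb(x_k) + q_{k+1}\bigr),
    \quad q_{k+1}\in\partial g(y_{k+1}),
\]
which is exactly the dual composite flow with $q$ treated implicitly. Expanding $\mathcal E^{\alpha}(x_{k+1},y_{k+1}) - \mathcal E^{\alpha}(x_k,y_k)$ along the scheme and substituting the composite strong Lyapunov identity at $(x_{k+1},\eta_{k+1})$ produces
\[
\begin{aligned}
    \mathcal E^{\alpha}(x_{k+1},y_{k+1}) - \mathcal E^{\alpha}(x_k,y_k)
    ={}& -\alpha\,\mathcal E^{\alpha}(x_{k+1},y_{k+1})
      - \alpha\,\mathcal B^{-\alpha}(x^{\star}, x_{k+1}, y_{k+1}, x^{\star}) \\
      &- \mathcal B^{\alpha}(x_k, x_{k+1}, y_{k+1}, y_k)
      - \alpha \langle y_{k+1} - x^{\star},\, q_{k+1} - q^{\star}\rangle,
\end{aligned}
\]
the only new term relative to \eqref{eq:Epha} being the final one, which is non-positive by convexity of $g$. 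Since \eqref{eq: A2} involves only $f$ and $\phi$, \cref{lem:crossterm} applies verbatim at $\alpha=\sqrt{\mu/C_{f,\phi}}$ and delivers $\mathcal B^{\pm\alpha}\ge 0$. Dropping all non-positive terms gives the contraction $\mathcal E^{\alpha}(x_{k+1},y_{k+1}) \le (1+\alpha)^{-1}\mathcal E^{\alpha}(x_k,y_k)$, and extracting $\alpha\,\mathcal E(x_{k+1},\eta_{k+1}) \le \mathcal E^{\alpha}(x_k,y_k) - \mathcal E^{\alpha}(x_{k+1},y_{k+1})$ as in \cref{thm:convergence rate of Acc-MD dual} yields the target estimate with $C_0 = \mathcal E^{\alpha}(x_0,y_0)\sqrt{C_{f,\phi}/\mu}$.

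The main obstacle is the algebraic bookkeeping leading to the displayed discrete identity: one must verify that the implicit subgradient $q_{k+1}$ combines with the shift $\nabla\fb(x^{\star}) + q^{\star} = -\mu\chi^{\star}$ so that all $q_{k+1}$-dependent contributions collapse into the clean monotonicity term $-\alpha\langle y_{k+1}-x^{\star}, q_{k+1} - q^{\star}\rangle$, leaving no residual cross terms coupling $q_{k+1}$ with $y_k$ or $\nabla\fb(x_k)$. Once this cancellation is confirmed, the remainder of the argument is a direct transcription of the smooth case, since neither \eqref{eq: A2} nor the definitions of $\mathcal E^{\alpha}$ and $\mathcal B^{\alpha}$ involve $g$.
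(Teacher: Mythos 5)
Your proposal is correct and follows exactly the route the paper intends: the paper omits the proof of this theorem, justifying it only by the remark that the implicitly evaluated subgradient of $g$ contributes no cross-term error, and your argument fills in precisely that reasoning (the equilibrium shift $\nabla \fb(x^{\star})+q^{\star}=-\mu\chi^{\star}$, the extra monotonicity term $\langle y-x^{\star},\,q(y)-q^{\star}\rangle\ge 0$ in the strong Lyapunov identity, and the corresponding non-positive term $-\alpha\langle y_{k+1}-x^{\star},\,q_{k+1}-q^{\star}\rangle$ in the discrete identity). One small correction: \cref{alg:AccMD-composite} is the \emph{backward} variant (AOR on $y$, explicit $\nabla\fb(x_k)$), so per the paper's remark following \cref{thm:convergence rate of Acc-MD dual} the discrete identity should be written with $\mathcal E^{-\alpha}$ rather than $\mathcal E^{\alpha}$; since \cref{lem:crossterm} gives $\mathcal B^{\pm\alpha}\ge 0$ for $|\alpha|\le\sqrt{\mu/C_{f,\phi}}$, this sign change does not affect the final estimate.
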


\begin{example}[LASSO problem]\label{ex: Lasso}\rm
Consider the LASSO problem
\begin{equation}\label{eq:lasso}
    \min_{x \in \mathbb{R}^d} F(x) := \frac12 \|Ax - b\|^2 + \lambda \|x\|_1, \quad \phi(x)=\frac12 x^\top D x,
\end{equation}
where $A \in \mathbb{R}^{n \times d}$ with $n<d$ and $D=\mathrm{diag}(A^\top A)$. Since $D$ is diagonal and positive definite, the Acc-MD subproblem (line~4 in Algorithm~\ref{alg:AccMD-composite}) admits a closed-form solution given by a generalized soft-thresholding operator. According to \cref{th:C2}, we may take $C_{f,\phi}=L=\rho\!\left(D^{-1/2}A^\top A D^{-1/2}\right),$
where $\rho(\cdot)$ denotes the spectral radius.
\end{example}

\section{Numerical Examples}\label{sec: numerical}

We evaluate the performance of Acc-MD on a variety of convex optimization problems, ensuring reproducibility by fixing all random seeds. We compare Acc-MD against several state-of-the-art first-order methods from the literature. Numerical experiment demonstrate that Acc-MD exhibits superior stability and competitive acceleration compared to existing variants.



\subsection{Quartic Objective}\label{sec:quartic}
We consider the quartic objective and mirror function from \cite[Section 2.1]{lu2018relatively}:
\begin{align*}
    f(x) = \frac{1}{4}\|Ex\|_2^4 + \frac{1}{4}\|Ax - b\|_4^4 + \frac{1}{2}\|Cx - d\|_2^2, \quad 
    \phi(x) = \frac{1}{4}\|x\|_2^4 + \frac{1}{2}\|x\|_2^2,
\end{align*}
where $C$ and $E$ are $n \times n$ positive definite matrices. The mirror map can be computed efficiently; see \cite{lu2018relatively} for details.

Let $\lambda_C$ and $\lambda_E$ denote the smallest eigenvalues of $C$ and $E$, respectively. Since $\|\nabla^2 f\|$ grows quadratically with $\|x\|_2$, $\nabla f$ is not globally Lipschitz. To apply NAG and AOR-HB, we assume $\|x\|_2 \leq R$ and estimate global parameters as:
\[
L_f(R)= (3\|E\|^4 + 3\|A\|^4) R^2 + 6\|A\|^3 \|b\|_2 R + 3\|A\|^2 \|b\|_2^2 + \|C\|^2, 
\qquad
\mu = \lambda_C^2.
\]
According to \cite{lu2018relatively}, \cref{assump: A1} holds with relative smoothness constant $L = L_f(1)$ and relative strong convexity constant $\mu = \min\left\{ \lambda_E^4/3,\ \lambda_C^2 \right\}.$ 

Since $\phi$ is smooth, we apply Theorem~\ref{th:C2} to estimate $C_{f, \phi}$. We denote by Acc-MD-forward/backward-1 using \eqref{eq:practicalC} $C_{f,\phi} \approx (L- \mu) \bigl(1 + L_{\nabla^2\phi}(R)\| x_k - y_k\|\bigr)$, and those using \eqref{eq:CL} $C_{f,\phi}\approx L$ as Acc-MD-forward/backward-2. 

We set $n=256$, $A \sim \frac{1}{\sqrt{n}} \mathcal{N}(\mathbf{0}, I_{n \times n})$, $C = I_n + C_0C_0^\top/n$ and $E = 2I_n + E_0E_0^\top/n$ with $C_0, E_0 \sim \mathcal{N}(\mathbf{0}, I_{n \times n})$, $b = 0$, and $d \sim \mathrm{Unif}(0,1)^n$. This setup yields $\mu = \mu_\phi = 1$. While $L$ remains fixed, $L_f$ grows with $R = \|d\|_2$, highlighting the benefit of relative smoothness.

We evaluate Acc-MD against several baselines: Nesterov’s accelerated mirror descent (NAMD), Nesterov’s accelerated gradient (NAG), accelerated over-relaxation heavy ball (AOR-HB), and standard mirror descent (MD). All methods use the stopping criterion $\|\nabla f(x_k)\|^2\le \mathrm{tol}\,\|\nabla f(x_0)\|^2$ with $\mathrm{tol}=10^{-12}$.

\begin{figure}[htp]
    \centering
    \begin{minipage}[t]{0.5\textwidth}
        \vspace{0pt} 
        Runtime results are presented in Figure~\ref{fig:quartic-exetime-adc}. Acc-MD variants converge faster and more stably than all baselines. Within the Acc-MD family, forward and backward variants exhibit nearly identical performance. 

        Notably, variants using $C_{f,\phi}$ from \eqref{eq:CL} outperform those using the practical estimate \eqref{eq:practicalC} in the initial stages. This is due to the large distance $\|x_k - y_k\|$ in early iterations, which renders the estimate in \eqref{eq:practicalC} loose. As iterates stabilize, both approaches achieve the same asymptotic accelerated linear rate.
    \end{minipage}
    \hfill
    \begin{minipage}[t]{0.45\textwidth}
        \vspace{0pt}
        \centering
        \includegraphics[width=1.0\linewidth]{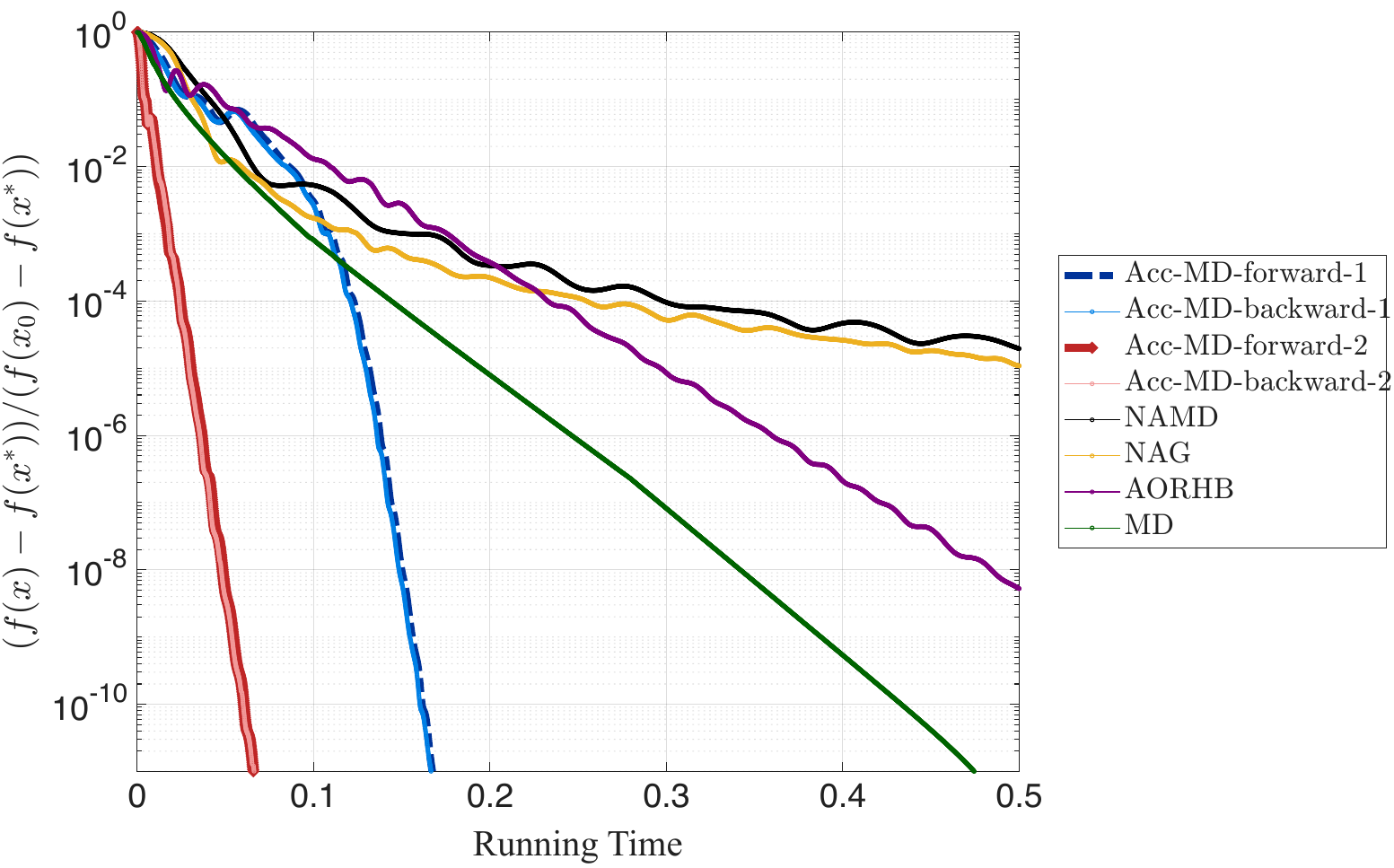}
        \caption{Log relative error vs. running time for the quartic problem.}
        \label{fig:quartic-exetime-adc}
    \end{minipage}
\end{figure}

\subsection{Entropic Mirror Descent}
Consider the log-linear model (Example~\ref{ex: log-linear}), where $\mu=1$ and $C_{f,\phi}=\|\mathbf{g}\|_2^2$. In this setting, Acc-MD achieves an accelerated linear rate of $(1+1/\|\mathbf{g}\|_2)^{-1}$. In contrast, existing entropic mirror descent methods attain, at best, a sublinear rate of $\mathcal{O}(1/k^2)$.

We compare Acc-MD with the Accelerated Bregman Proximal Gradient (ABPG) and Bregman Proximal Gradient (BPG) methods from \cite{Hanzely2021}, as shown in Figure~\ref{fig:entropicMDexample}. For this problem, the TSE exponent is $\gamma=1$; consequently, ABPG only guarantees a non-accelerated rate of $\mathcal{O}(1/k)$ and performs slightly slower than BPG, which is consistent with the findings in \cite[Section~6.1.1]{Hanzely2021}. Figure~\ref{fig:entropicMDexample} demonstrates that Acc-MD markedly outperforms both baselines, exhibiting fast and stable error decay.

\begin{figure}[h]
 \begin{minipage}{0.475\textwidth}
 \centering
  \includegraphics[width=0.8\linewidth]{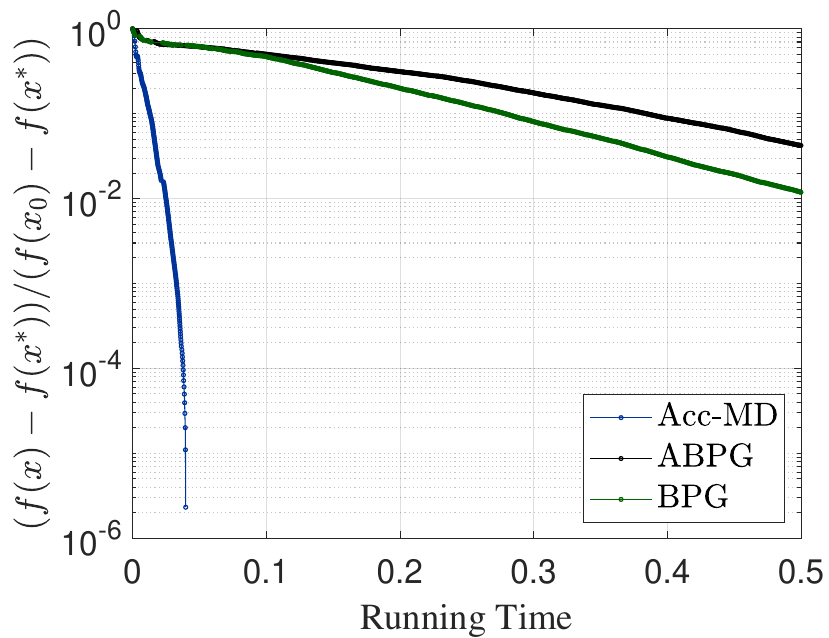}
        \caption{Log relative error vs. running time curve on the entropic MD problem.}
        \label{fig:entropicMDexample}
\end{minipage}
 \hfill
   \begin{minipage}{0.465\textwidth}
 \centering
        \includegraphics[width=0.8\linewidth]{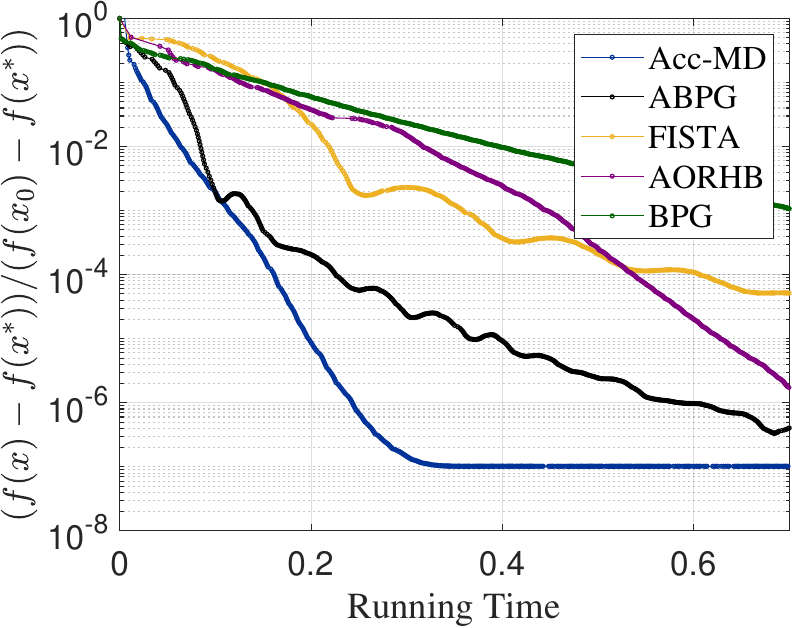}
    \caption{Relative error vs. running time curve for the LASSO problem.}
    \label{fig:leukemialasso}
\end{minipage}
\end{figure}


\subsection{LASSO Problem}
We evaluate the over-parameterized LASSO problem (Example~\ref{ex: Lasso}) using the Leukemia dataset, a standard benchmark in the LASSO literature \cite{sehic22a}. The dataset contains $7{,}129$ gene-expression features across $72$ samples for Leukemia type prediction. We set $\lambda = 0.05$; while larger regularization values induce stronger sparsity and accelerate convergence across all methods, we choose a smaller value to better accentuate performance differences.

Since the model is over-parameterized ($n < d$), the strong convexity parameter is $\mu = 0$. Consequently, we apply the perturbed variant of Acc-MD (Algorithm \ref{alg: restart perturbed VOS}). As illustrated in Figure \ref{fig:leukemialasso}, Acc-MD converges markedly faster than competing approaches and naturally exhibits an early-stopping effect.

\section{Conclusion}
In this paper, we introduced accelerated mirror descent (Acc-MD) methods that achieve improved convergence rates under relative smoothness and relative convexity settings. By leveraging relative strong convexity and a new generalized Cauchy-Schwarz (GCS) inequality, we established the first accelerated linear convergence rate for mirror descent methods. In the standard convex setting, we utilized a perturbation–homotopy argument to recover the optimal $\mathcal{O}(1/k^2)$ rate. Our proposed framework naturally extends to composite and constrained optimization problems while maintaining these accelerated guarantees.

We emphasize that \cref{assump: A2} is not intended to be universally verifiable or strictly stronger than existing assumptions. Relaxing this condition, or adaptively updating the constant $C_{f,\phi}$, represents an important direction for future research. Another promising path is the extension of this framework to stochastic settings, which is highly relevant for large-scale applications such as neural network training.

\bibliographystyle{siamplain}
\bibliography{references}

@misc{amsmath,
	author = {{American Mathematical Society}},
	title = {User's Guide for the \texttt{amsmath} Package (Version 2.0)},
	url = {ftp://ftp.ams.org/pub/tex/doc/amsmath/amsldoc.pdf},
	urldate = {2015-07-30},
	year = 2002,
	bdsk-url-1 = {ftp://ftp.ams.org/pub/tex/doc/amsmath/amsldoc.pdf}}

@article{ZhouLiangShen2019simple,
	abstract = {In this paper, we provide a simple convergence analysis of proximal gradient algorithm with Bregman distance, which provides a tighter bound than existing result. In particular, for the problem of minimizing a class of convex objective functions, we show that proximal gradient algorithm with Bregman distance can be viewed as proximal point algorithm that incorporates another Bregman distance. Consequently, the convergence result of the proximal gradient algorithm with Bregman distance follows directly from that of the proximal point algorithm with Bregman distance, and this leads to a simpler convergence analysis with a tighter convergence bound than existing ones. We further propose and analyze the backtracking line-search variant of the proximal gradient algorithm with Bregman distance.},
	author = {Zhou, Yi and Liang, Yingbin and Shen, Lixin},
	date = {2019/07/01},
	date-added = {2025-09-25 10:34:31 +0800},
	date-modified = {2025-09-25 10:34:31 +0800},
	doi = {10.1007/s10589-019-00092-y},
	id = {Zhou2019},
	isbn = {1573-2894},
	journal = {Computational Optimization and Applications},
	number = {3},
	pages = {903--912},
	title = {A simple convergence analysis of {Bregman} proximal gradient algorithm},
	volume = {73},
	year = {2019},
	bdsk-url-1 = {https://doi.org/10.1007/s10589-019-00092-y}}

@article{Teboul2018simplified,
	abstract = {We discuss the foundational role of the proximal framework in the development and analysis of some iconic first order optimization algorithms, with a focus on non-Euclidean proximal distances of Bregman type, which are central to the analysis of many other fundamental first order minimization relatives. We stress simplification and unification by highlighting self-contained elementary proof-patterns to obtain convergence rate and global convergence both in the convex and the nonconvex settings, which in turn also allows to present some novel results.},
	author = {Teboulle, Marc},
	date = {2018/07/01},
	date-added = {2025-09-25 10:36:34 +0800},
	date-modified = {2025-09-25 10:36:34 +0800},
	doi = {10.1007/s10107-018-1284-2},
	id = {Teboulle2018},
	isbn = {1436-4646},
	journal = {Mathematical Programming},
	number = {1},
	pages = {67--96},
	title = {A simplified view of first order methods for optimization},
	volume = {170},
	year = {2018},
	bdsk-url-1 = {https://doi.org/10.1007/s10107-018-1284-2}}

@inproceedings{BirnbaDevanuXiao2011Distributed,
	abstract = {Designing distributed algorithms that converge quickly to an equilibrium is one of the foremost research goals in algorithmic game theory, and convex programs have played a crucial role in the design of algorithms for Fisher markets. In this paper we shed new light on both aspects for Fisher markets with linear and spending constraint utilities. We show fast convergence of the Proportional Response dynamics recently introduced by Wu and Zhang. The convergence is obtained from a new perspective: we show that the Proportional Response dynamics is equivalent to a gradient descent algorithm (with respect to a Bregman divergence instead of euclidean distance) on a convex program that captures the equilibria for linear utilities. We further show that the convex program program easily extends to the case of spending constraint utilities, thus resolving an open question raised by Vazirani. This also gives a way to extend the Proportional Response dynamics to spending constraint utilties. We also prove a technical result that is interesting in its own right: that the gradient descent algorithm based on a Bregman divergence converges with rate O(1/t) under a condition that is weaker than having Lipschitz continuous gradient (which is the usual assumption in the optimization literature for obtaining the same rate).},
	address = {New York, NY, USA},
	author = {Birnbaum, Benjamin and Devanur, Nikhil R. and Xiao, Lin},
	booktitle = {Proceedings of the 12th ACM Conference on Electronic Commerce},
	doi = {https://doi.org/10.1145/1993574.1993594},
	isbn = {9781450302616},
	keywords = {distributed algorithm, gradient descent, market equilibrium, proportional response},
	location = {San Jose, California, USA},
	numpages = {10},
	pages = {127--136},
	publisher = {Association for Computing Machinery},
	series = {EC '11},
	title = {Distributed algorithms via gradient descent for {Fisher} markets},
	year = {2011},
	bdsk-url-1 = {https://doi.org/10.1145/1993574.1993594}}

@article{lu2018relatively,
	abstract = { The usual approach to developing and analyzing first-order methods for smooth convex optimization assumes that the gradient of the objective function is uniformly smooth with some Lipschitz constant \$L\$. However, in many settings the differentiable convex function \$f(\cdot)\$ is not uniformly smooth---for example, in \$D\$-optimal design where \$f(x):=-\ln \det(HXH^T)\$ and \$X:= \mbox{{\bf D}iag} (x)\$, or even the univariate setting with \$f(x) := -\ln(x) + x^2\$. In this paper we develop a notion of ``relative smoothness'' and relative strong convexity that is determined relative to a user-specified ``reference function'' \$h(\cdot)\$ (that should be computationally tractable for algorithms), and we show that many differentiable convex functions are relatively smooth with respect to a correspondingly fairly simple reference function \$h(\cdot)\$. We extend two standard algorithms---the primal gradient scheme and the dual averaging scheme---to our new setting, with associated computational guarantees. We apply our new approach to develop a new first-order method for the \$D\$-optimal design problem, with associated computational complexity analysis. Some of our results have a certain overlap with the recent work [H. H. Bauschke, J. Bolte, and M. Teboulle, Math. Oper. Res., 42 (2017), pp. 330--348]. },
	author = {Lu, Haihao and Freund, Robert M. and Nesterov, Yurii},
	doi = {https://doi.org/10.1137/16M1099546},
	journal = {SIAM Journal on Optimization},
	number = {1},
	pages = {333-354},
	title = {Relatively Smooth Convex Optimization by First-Order Methods, and Applications},
	volume = {28},
	year = {2018},
	bdsk-url-1 = {https://doi.org/10.1137/16M1099546}}

@misc{chen2021unified,
	archiveprefix = {arXiv},
	author = {Long Chen and Hao Luo},
	eprint = {2108.00132},
	primaryclass = {math.OC},
	title = {A Unified Convergence Analysis of First Order Convex Optimization Methods via Strong {Lyapunov} Functions},
	year = {2021},
	bdsk-url-1 = {https://arxiv.org/abs/2108.00132}}

@article{chen1993convergence,
	abstract = { An alternative convergence proof of a proximal-like minimization algorithm using Bregman functions, recently proposed by Censor and Zenios, is presented. The analysis allows the establishment of a global convergence rate of the algorithm expressed in terms of function values. },
	author = {Chen, Gong and Teboulle, Marc},
	doi = {10.1137/0803026},
	journal = {SIAM Journal on Optimization},
	number = {3},
	pages = {538-543},
	title = {Convergence Analysis of a Proximal-Like Minimization Algorithm Using {Bregman} Functions},
	volume = {3},
	year = {1993},
	bdsk-url-1 = {https://doi.org/10.1137/0803026}}

@inproceedings{krichene2015accelerated,
	author = {Krichene, Walid and Bayen, Alexandre and Bartlett, Peter L},
	booktitle = {Advances in Neural Information Processing Systems},
	editor = {C. Cortes and N. Lawrence and D. Lee and M. Sugiyama and R. Garnett},
	publisher = {Curran Associates, Inc.},
	title = {Accelerated Mirror Descent in Continuous and Discrete Time},
	url = {https://proceedings.neurips.cc/paper_files/paper/2015/file/f60bb6bb4c96d4df93c51bd69dcc15a0-Paper.pdf},
	volume = {28},
	year = {2015},
	bdsk-url-1 = {https://proceedings.neurips.cc/paper_files/paper/2015/file/f60bb6bb4c96d4df93c51bd69dcc15a0-Paper.pdf}}

@article{Osher2016,
	abstract = {In this paper, we recover sparse signals from their noisy linear measurements by solving nonlinear differential inclusions, which is based on the notion of inverse scale space (ISS) developed in applied mathematics. Our goal here is to bring this idea to address a challenging problem in statistics, i.e. finding the oracle estimator which is unbiased and sign consistent using dynamics. We call our dynamics Bregman ISS and Linearized Bregman ISS. A well-known shortcoming of LASSO and any convex regularization approaches lies in the bias of estimators. However, we show that under proper conditions, there exists a bias-free and sign-consistent point on the solution paths of such dynamics, which corresponds to a signal that is the unbiased estimate of the true signal and whose entries have the same signs as those of the true signs, i.e. the oracle estimator. Therefore, their solution paths are regularization paths better than the LASSO regularization path, since the points on the latter path are biased when sign-consistency is reached. We also show how to efficiently compute their solution paths in both continuous and discretized settings: the full solution paths can be exactly computed piece by piece, and a discretization leads to Linearized Bregman iteration, which is a simple iterative thresholding rule and easy to parallelize. Theoretical guarantees such as sign-consistency and minimax optimal l2-error bounds are established in both continuous and discrete settings for specific points on the paths. Early-stopping rules for identifying these points are given. The key treatment relies on the development of differential inequalities for differential inclusions and their discretizations, which extends the previous results and leads to exponentially fast recovering of sparse signals before selecting wrong ones.},
	author = {Stanley Osher and Feng Ruan and Jiechao Xiong and Yuan Yao and Wotao Yin},
	doi = {10.1016/j.acha.2016.01.002},
	issn = {1063-5203},
	journal = {Applied and Computational Harmonic Analysis},
	keywords = {Linearized Bregman, Differential inclusion, Early stopping regularization, Statistical consistency},
	note = {Sparse Representations with Applications in Imaging Science, Data Analysis, and Beyond, Part II},
	number = {2},
	pages = {436-469},
	title = {Sparse recovery via differential inclusions},
	volume = {41},
	year = {2016},
	bdsk-url-1 = {https://www.sciencedirect.com/science/article/pii/S1063520316000038},
	bdsk-url-2 = {https://doi.org/10.1016/j.acha.2016.01.002}}

@article{Yao23,
	author = {Fu, Yanwei and Liu, Chen and Li, Donghao and Zhong, Zuyuan and Sun, Xinwei and Zeng, Jinshan and Yao, Yuan},
	doi = {10.1109/TPAMI.2022.3168881},
	journal = {IEEE Transactions on Pattern Analysis and Machine Intelligence},
	keywords = {Training;Computational modeling;Neural networks;Deep learning;Convergence;Mirrors;Couplings;Early stopping;growing network;inverse scale space;linearized bregman iteration;lottery ticket hypothesis;network pruning;structural sparsity},
	number = {2},
	pages = {1749-1765},
	title = {Exploring Structural Sparsity of Deep Networks Via Inverse Scale Spaces},
	volume = {45},
	year = {2023},
	bdsk-url-1 = {https://doi.org/10.1109/TPAMI.2022.3168881}}

@misc{Allen-Zhu2016,
	archiveprefix = {arXiv:1407.1537},
	author = {Zeyuan Allen-Zhu and Lorenzo Orecchia},
	date-modified = {2025-12-30 22:43:57 -0800},
	eprint = {1407.1537},
	primaryclass = {cs.DS},
	title = {Linear Coupling: An Ultimate Unification of Gradient and Mirror Descent},
	year = {2016},
	bdsk-url-1 = {https://arxiv.org/abs/1407.1537}}

@article{beck_mirror_2003,
	abstract = {The mirror descent algorithm (MDA) was introduced by Nemirovsky and Yudin for solving convex optimization problems. This method exhibits an efficiency estimate that is mildly dependent in the decision variables dimension, and thus suitable for solving very large scale optimization problems. We present a new derivation and analysis of this algorithm. We show that the MDA can be viewed as a nonlinear projected-subgradient type method, derived from using a general distance-like function instead of the usual Euclidean squared distance. Within this interpretation, we derive in a simple way convergence and efficiency estimates. We then propose an Entropic mirror descent algorithm for convex minimization over the unit simplex, with a global efficiency estimate proven to be mildly dependent in the dimension of the problem.},
	author = {Amir Beck and Marc Teboulle},
	doi = {https://doi.org/10.1016/S0167-6377(02)00231-6},
	issn = {0167-6377},
	journal = {Operations Research Letters},
	keywords = {Nonsmooth convex minimization, Projected subgradient methods, Nonlinear projections, Mirror descent algorithms, Relative entropy, Complexity analysis, Global rate of convergence},
	number = {3},
	pages = {167-175},
	title = {Mirror descent and nonlinear projected subgradient methods for convex optimization},
	volume = {31},
	year = {2003},
	bdsk-url-1 = {https://www.sciencedirect.com/science/article/pii/S0167637702002316},
	bdsk-url-2 = {https://doi.org/10.1016/S0167-6377(02)00231-6}}

@article{wibisono2016variational,
	abstract = {Optimization problems arise naturally in statistical machine learning and other fields concerned with data analysis. The rapid growth in the scale and complexity of modern datasets has led to a focus on gradient-based methods and also on the class of accelerated methods, first proposed by Nesterov in 1983. Accelerated methods achieve faster convergence rates than gradient methods and indeed, under certain conditions, they achieve optimal rates. However, accelerated methods are not descent methods and remain a conceptual mystery. We propose a variational, continuous-time framework for understanding accelerated methods. We provide a systematic methodology for converting accelerated higher-order methods from continuous time to discrete time. Our work illuminates a class of dynamics that may be useful for designing better algorithms for optimization. Accelerated gradient methods play a central role in optimization, achieving optimal rates in many settings. Although many generalizations and extensions of Nesterov's original acceleration method have been proposed, it is not yet clear what is the natural scope of the acceleration concept. In this paper, we study accelerated methods from a continuous-time perspective. We show that there is a Lagrangian functional that we call the Bregman Lagrangian, which generates a large class of accelerated methods in continuous time, including (but not limited to) accelerated gradient descent, its non-Euclidean extension, and accelerated higher-order gradient methods. We show that the continuous-time limit of all of these methods corresponds to traveling the same curve in spacetime at different speeds. From this perspective, Nesterov's technique and many of its generalizations can be viewed as a systematic way to go from the continuous-time curves generated by the Bregman Lagrangian to a family of discrete-time accelerated algorithms.},
	author = {Andre Wibisono and Ashia C. Wilson and Michael I. Jordan},
	doi = {10.1073/pnas.1614734113},
	journal = {Proceedings of the National Academy of Sciences},
	number = {47},
	pages = {E7351-E7358},
	title = {A variational perspective on accelerated methods in optimization},
	volume = {113},
	year = {2016},
	bdsk-url-1 = {https://www.pnas.org/doi/abs/10.1073/pnas.1614734113},
	bdsk-url-2 = {https://doi.org/10.1073/pnas.1614734113}}

@article{Su;Boyd;Candes:2016differential,
	author = {Weijie Su and Stephen Boyd and Emmanuel J. Cand{{\`e}}s},
	journal = {Journal of Machine Learning Research},
	number = {153},
	pages = {1--43},
	title = {A Differential Equation for Modeling {Nesterov}'s Accelerated Gradient Method: Theory and Insights},
	url = {http://jmlr.org/papers/v17/15-084.html},
	volume = {17},
	year = {2016},
	bdsk-url-1 = {http://jmlr.org/papers/v17/15-084.html}}

@article{yuan2023analysis,
	abstract = {Mirror descent, which can be seen as generalization of gradient descent for solving constrained optimization problem, has found a variety applications in many fields. As growing demand of solving high-dimensional constrained optimization problem, accelerated form of mirror descent has been proposed, along with its corresponding low-resolution ordinary differential equations (ODEs) framework has been studied. However, The low-resolution ODEs are unable to distinguish between Polyak's heavy-ball method and Nesterov's accelerated gradient method. This problem also arises with the low-resolution ODEs for accelerated mirror descent. To address this issue, we derive the high-resolution ODEs for accelerated mirror descent and propose a general Lyapunov function framework to analyze its convergence rates in both continuous time and discrete time. Furthermore, we demonstrate that the accelerated mirror descent can minimize the squared gradient norm at an inverse cubic rate by using the high-resolution ODEs framework. In the end, we extend the high-resolution ODEs framework for the accelerated mirror descent method to analyze the accelerated higher-order mirror descent and obtain finer convergence results.},
	author = {Yuan, Ya-Xiang and Zhang, Yi},
	date = {2024/07/08},
	date-added = {2025-09-25 14:36:14 +0800},
	date-modified = {2025-09-25 14:36:14 +0800},
	doi = {https://doi.org/10.1007/s40305-024-00542-3},
	id = {Yuan2024},
	isbn = {2194-6698},
	journal = {Journal of the Operations Research Society of China},
	title = {Analyze Accelerated Mirror Descent via High-Resolution {ODE}s},
	year = {2024},
	bdsk-url-1 = {https://doi.org/10.1007/s40305-024-00542-3}}

@article{Hanzely2021,
	abstract = {We consider the problem of minimizing the sum of two convex functions: one is differentiable and relatively smooth with respect to a reference convex function, and the other can be nondifferentiable but simple to optimize. We investigate a triangle scaling property of the Bregman distance generated by the reference convex function and present accelerated Bregman proximal gradient (ABPG) methods that attain an {\$}{\$}O(k\^{}{\{}-{$\backslash$}gamma {\}}){\$}{\$}convergence rate, where {\$}{\$}{$\backslash$}gamma {$\backslash$}in (0,2{$]$}{\$}{\$}is the triangle scaling exponent (TSE) of the Bregman distance. For the Euclidean distance, we have {\$}{\$}{$\backslash$}gamma =2{\$}{\$}and recover the convergence rate of Nesterov's accelerated gradient methods. For non-Euclidean Bregman distances, the TSE can be much smaller (say {\$}{\$}{$\backslash$}gamma {$\backslash$}le 1{\$}{\$}), but we show that a relaxed definition of intrinsic TSE is always equal to 2. We exploit the intrinsic TSE to develop adaptive ABPG methods that converge much faster in practice. Although theoretical guarantees on a fast convergence rate seem to be out of reach in general, our methods obtain empirical {\$}{\$}O(k\^{}{\{}-2{\}}){\$}{\$}rates in numerical experiments on several applications and provide posterior numerical certificates for the fast rates.},
	author = {Hanzely, Filip and Richt{\'a}rik, Peter and Xiao, Lin},
	date = {2021/06/01},
	date-added = {2025-12-25 12:49:00 -0800},
	date-modified = {2025-12-25 12:49:00 -0800},
	doi = {10.1007/s10589-021-00273-8},
	id = {Hanzely2021},
	isbn = {1573-2894},
	journal = {Computational Optimization and Applications},
	number = {2},
	pages = {405--440},
	title = {Accelerated Bregman proximal gradient methods for relatively smooth convex optimization},	volume = {79},
	year = {2021},
	bdsk-url-1 = {https://doi.org/10.1007/s10589-021-00273-8}}

@book{nemirovskij1983problem,
	author = {Nemirovskij, Arkadij Semenovi{\v{c}} and Yudin, David Borisovich},
	isbn = {9780471103455},
	lccn = {82011065},
	publisher = {Wiley},
	series = {A Wiley-Interscience publication},
	title = {Problem Complexity and Method Efficiency in Optimization},
	year = {1983}}

@article{nesterov2005smooth,
	abstract = {In this paper we propose a new approach for constructing efficient schemes for non-smooth convex optimization. It is based on a special smoothing technique, which can be applied to functions with explicit max-structure. Our approach can be considered as an alternative to black-box minimization. From the viewpoint of efficiency estimates, we manage to improve the traditional bounds on the number of iterations of the gradient schemes from keeping basically the complexity of each iteration unchanged.},
	author = {Nesterov, Yurii},
	date = {2005/05/01},
	date-added = {2025-09-25 10:43:39 +0800},
	date-modified = {2025-09-25 10:43:39 +0800},
	doi = {10.1007/s10107-004-0552-5},
	id = {Nesterov2005},
	isbn = {1436-4646},
	journal = {Mathematical Programming},
	number = {1},
	pages = {127--152},
	title = {Smooth minimization of non-smooth functions},
	volume = {103},
	year = {2005},
	bdsk-url-1 = {https://doi.org/10.1007/s10107-004-0552-5}}

@misc{chen2025accelerated,
	archiveprefix = {arXiv},
	author = {Long Chen and Luo Hao and Jingrong Wei},
	eprint = {2505.04065},
	primaryclass = {math.OC},
	title = {Accelerated Gradient Methods Through Variable and Operator Splitting},
	year = {2025},
	bdsk-url-1 = {https://arxiv.org/abs/2505.04065}}

@article{liang2024differentially,
	author = {Liang, Zhicong and Wang, Bao and Gu, Quanquan and Osher, Stanley and Yao, Yuan},
	journal = {Applied and Computational Harmonic Analysis},
	pages = {101660},
	publisher = {Elsevier},
	title = {Differentially private federated learning with Laplacian smoothing},
	volume = {72},
	year = {2024}}

@article{ben2001ordered,
	author = {Ben-Tal, Aharon and Margalit, Tamar and Nemirovski, Arkadi},
	journal = {SIAM Journal on Optimization},
	number = {1},
	pages = {79--108},
	publisher = {SIAM},
	title = {The ordered subsets mirror descent optimization method with applications to tomography},
	volume = {12},
	year = {2001}}

@article{bauschke2017descent,
	author = {Bauschke, Heinz H and Bolte, J{\'e}r{\^o}me and Teboulle, Marc},
	journal = {Mathematics of Operations Research},
	number = {2},
	pages = {330--348},
	publisher = {Informs},
	title = {A descent lemma beyond {Lipschitz} gradient continuity: first-order methods revisited and applications},
	volume = {42},
	year = {2017}}

@article{Collins2008,
	author = {Michael Collins and Amir Globerson and Terry Koo and Xavier Carreras and Peter L. Bartlett},
	journal = {Journal of Machine Learning Research},
	number = {58},
	pages = {1775--1822},
	title = {Exponentiated Gradient Algorithms for Conditional Random Fields and Max-Margin {Markov} Networks},
	url = {http://jmlr.org/papers/v9/collins08a.html},
	volume = {9},
	year = {2008},
	bdsk-url-1 = {http://jmlr.org/papers/v9/collins08a.html}}

@article{DragomTaylordAspreBolte2022Optimal,
	abstract = {We provide a lower bound showing that the O(1/k) convergence rate of the NoLips method (a.k.a. Bregman Gradient or Mirror Descent) is optimal for the class of problems satisfying the relative smoothness assumption. This assumption appeared in the recent developments around the Bregman Gradient method, where acceleration remained an open issue. The main inspiration behind this lower bound stems from an extension of the performance estimation framework of Drori and Teboulle (Mathematical Programming, 2014) to Bregman first-order methods. This technique allows computing worst-case scenarios for NoLips in the context of relatively-smooth minimization. In particular, we used numerically generated worst-case examples as a basis for obtaining the general lower bound.},
	author = {Dragomir, Radu-Alexandru and Taylor, Adrien B. and d'Aspremont, Alexandre and Bolte, J{\'e}r{\^o}me},
	date = {2022/07/01},
	date-added = {2025-09-25 10:55:24 +0800},
	date-modified = {2025-09-25 10:55:24 +0800},
	doi = {10.1007/s10107-021-01618-1},
	id = {Dragomir2022},
	isbn = {1436-4646},
	journal = {Mathematical Programming},
	number = {1},
	pages = {41--83},
	title = {Optimal complexity and certification of {Bregman} first-order methods},
	volume = {194},
	year = {2022},
	bdsk-url-1 = {https://doi.org/10.1007/s10107-021-01618-1}}

@inproceedings{wei2024accelerated,
	author = {Jingrong Wei and Long Chen},
	booktitle = {The Thirteenth International Conference on Learning Representations},
	title = {Accelerated Over-Relaxation {Heavy-Ball} Method: Achieving Global Accelerated Convergence with Broad Generalization},
	url = {https://openreview.net/forum?id=SWEqzy7IQB},
	year = {2025},
	bdsk-url-1 = {https://openreview.net/forum?id=SWEqzy7IQB}}

@inproceedings{sehic22a,
	abstract = {While Weighted Lasso sparse regression has appealing statistical guarantees that would entail a major real-world impact in finance, genomics, and brain imaging applications, it is typically scarcely adopted due to its complex high-dimensional space composed by thousands of hyperparameters. On the other hand, the latest progress with high-dimensional hyperparameter optimization (HD-HPO) methods for black-box functions demonstrates that high-dimensional applications can indeed be efficiently optimized. Despite this initial success, HD-HPO approaches are mostly applied to synthetic problems with a moderate number of dimensions, which limits its impact in scientific and engineering applications. We propose LassoBench, the first benchmark suite tailored for Weighted Lasso regression. LassoBench consists of benchmarks for both well-controlled synthetic setups (number of samples, noise level, ambient and effective dimensionalities, and multiple fidelities) and real-world datasets, which enables the use of many flavors of HPO algorithms to be studied and extended to the high-dimensional Lasso setting. We evaluate 6 state-of-the-art HPO methods and 3 Lasso baselines, and demonstrate that Bayesian optimization and evolutionary strategies can improve over the methods commonly used for sparse regression while highlighting limitations of these frameworks in very high-dimensional and noisy settings.},
	author = {\v{S}ehi\'c, Kenan and Gramfort, Alexandre and Salmon, Joseph and Nardi, Luigi},
	booktitle = {Proceedings of the First International Conference on Automated Machine Learning},
	editor = {Guyon, Isabelle and Lindauer, Marius and van der Schaar, Mihaela and Hutter, Frank and Garnett, Roman},
	month = {25--27 Jul},
	pages = {2/1--24},
	pdf = {https://proceedings.mlr.press/v188/sehic22a/sehic22a.pdf},
	publisher = {PMLR},
	series = {Proceedings of Machine Learning Research},
	title = {LassoBench: A High-Dimensional Hyperparameter Optimization Benchmark Suite for Lasso},
	url = {https://proceedings.mlr.press/v188/sehic22a.html},
	volume = {188},
	year = {2022},
	bdsk-url-1 = {https://proceedings.mlr.press/v188/sehic22a.html}}

@misc{yu2025mirrordescentgeneralizedsmoothness,
	archiveprefix = {arXiv},
	author = {Dingzhi Yu and Wei Jiang and Yuanyu Wan and Lijun Zhang},
	eprint = {2502.00753},
	primaryclass = {math.OC},
	title = {Mirror Descent Under Generalized Smoothness},
	url = {https://arxiv.org/abs/2502.00753},
	year = {2025},
	bdsk-url-1 = {https://arxiv.org/abs/2502.00753}}

@article{li2023convex,
	author = {Li, Haochuan and Qian, Jian and Tian, Yi and Rakhlin, Alexander and Jadbabaie, Ali},
	journal = {Advances in Neural Information Processing Systems},
	pages = {40238--40271},
	title = {Convex and non-convex optimization under generalized smoothness},
	volume = {36},
	year = {2023}}

@article{nemirovski2004prox,
	abstract = { We propose a prox-type method with efficiency estimate \$O(\epsilon^{-1})\$ for approximating saddle points of convex-concave C\$^{1,1}\$ functions and solutions of variational inequalities with monotone Lipschitz continuous operators. Application examples include matrix games, eigenvalue minimization, and computing the Lovasz capacity number of a graph, and these are illustrated by numerical experiments with large-scale matrix games and Lovasz capacity problems. },
	author = {Nemirovski, Arkadi},
	doi = {10.1137/S1052623403425629},
	journal = {SIAM Journal on Optimization},
	number = {1},
	pages = {229-251},
	title = {Prox-Method with Rate of Convergence O(1/t) for Variational Inequalities with Lipschitz Continuous Monotone Operators and Smooth Convex-Concave Saddle Point Problems},
	volume = {15},
	year = {2004},
	bdsk-url-1 = {https://doi.org/10.1137/S1052623403425629}}

@article{bao2024convergence,
	abstract = { Abstract. Finding stationary states of Landau free energy functionals has to solve a nonconvex infinite-dimensional optimization problem. In this paper, we develop a Bregman distance based optimization method for minimizing a class of Landau energy functionals and focus on its convergence analysis in the function space. We first analyze the regularity of the stationary states and show the weakly sequential convergence results of the proposed method. Furthermore, under the {\L}ojasiewicz--Simon property, we prove a strongly sequential convergent property and establish the local convergence rate in an appropriate Hilbert space. In particular, we analyze the {\L}ojasiewicz exponent of three well-known Landau models, the Landau--Brazovskii, Lifshitz--Petrich, and Ohta--Kawasaki free energy functionals. Finally, numerical results support our theoretical analysis. },
	author = {Bao, Chenglong and Chen, Chang and Jiang, Kai and Qiu, Lingyun},
	doi = {10.1137/22M1517664},
	journal = {SIAM Journal on Numerical Analysis},
	number = {1},
	pages = {476-499},
	title = {Convergence Analysis for Bregman Iterations in Minimizing a Class of Landau Free Energy Functionals},
	volume = {62},
	year = {2024},
	bdsk-url-1 = {https://doi.org/10.1137/22M1517664}}

@article{d2021acceleration,
	author = {Alexandre d'Aspremont and Damien Scieur and Adrien Taylor},
	doi = {10.1561/2400000036},
	issn = {2167-3888},
	journal = {Foundations and Trends{\textregistered} in Optimization},
	number = {1-2},
	pages = {1-245},
	title = {Acceleration Methods},
	volume = {5},
	year = {2021},
	bdsk-url-1 = {http://dx.doi.org/10.1561/2400000036}}

@book{bertsekas2009convex,
	author = {Bertsekas, D.},
	isbn = {9781886529311},
	publisher = {Athena Scientific},
	series = {Athena Scientific optimization and computation series},
	title = {Convex Optimization Theory},
	year = {2009}}

@book{rockafellar1970convex,
	abstract = {Available for the first time in paperback, R. Tyrrell Rockafellar's classic study presents readers with a coherent branch of nonlinear mathematical analysis that is especially suited to the study of optimization problems. Rockafellar's theory differs from classical analysis in that differentiability assumptions are replaced by convexity assumptions. The topics treated in this volume include: systems of inequalities, the minimum or maximum of a convex function over a convex set, Lagrange multipliers, minimax theorems and duality, as well as basic results about the structure of convex sets and the continuity and differentiability of convex functions and saddle- functions.This book has firmly established a new and vital area not only for pure mathematics but also for applications to economics and engineering. A sound knowledge of linear algebra and introductory real analysis should provide readers with sufficient background for this book. There is also a guide for the reader who may be using the book as an introduction, indicating which parts are essential and which may be skipped on a first reading.},
	author = {R. TYRRELL ROCKAFELLAR},
	isbn = {9780691015866},
	publisher = {Princeton University Press},
	title = {Convex Analysis},
	url = {http://www.jstor.org/stable/j.ctt14bs1ff},
	urldate = {2025-12-25},
	year = {1970},
	bdsk-url-1 = {http://www.jstor.org/stable/j.ctt14bs1ff}}

@article{lan2023policy,
	abstract = {We present new policy mirror descent (PMD) methods for solving reinforcement learning (RL) problems with either strongly convex or general convex regularizers. By exploring the structural properties of these overall highly nonconvex problems we show that the PMD methods exhibit fast linear rate of convergence to the global optimality. We develop stochastic counterparts of these methods, and establish an {\$}{\$}{\{}{\{}{$\backslash$}mathcal {\{}O{\}}{\}}{\}}(1/{$\backslash$}epsilon ){\$}{\$}(resp., {\$}{\$}{\{}{\{}{$\backslash$}mathcal {\{}O{\}}{\}}{\}}(1/{$\backslash$}epsilon \^{}2){\$}{\$}) sampling complexity for solving these RL problems with strongly (resp., general) convex regularizers using different sampling schemes, where {\$}{\$}{$\backslash$}epsilon {\$}{\$}denote the target accuracy. We further show that the complexity for computing the gradients of these regularizers, if necessary, can be bounded by {\$}{\$}{\{}{\{}{$\backslash$}mathcal {\{}O{\}}{\}}{\}}{$\backslash$}{\{}({$\backslash$}log {\_}{$\backslash$}gamma {$\backslash$}epsilon ) {$[$}(1-{$\backslash$}gamma )L/{$\backslash$}mu {$]$}\^{}{\{}1/2{\}}{$\backslash$}log (1/{$\backslash$}epsilon ){$\backslash$}{\}}{\$}{\$}(resp., {\$}{\$}{\{}{\{}{$\backslash$}mathcal {\{}O{\}}{\}}{\}} {$\backslash$}{\{}({$\backslash$}log {\_}{$\backslash$}gamma {$\backslash$}epsilon ) (L/{$\backslash$}epsilon )\^{}{\{}1/2{\}}{$\backslash$}{\}}{\$}{\$}) for problems with strongly (resp., general) convex regularizers. Here {\$}{\$}{$\backslash$}gamma {\$}{\$}denotes the discounting factor. To the best of our knowledge, these complexity bounds, along with our algorithmic developments, appear to be new in both optimization and RL literature. The introduction of these convex regularizers also greatly enhances the flexibility and thus expands the applicability of RL models.},
	author = {Lan, Guanghui},
	date = {2023/03/01},
	date-added = {2025-12-25 21:32:54 -0800},
	date-modified = {2025-12-25 21:32:54 -0800},
	doi = {10.1007/s10107-022-01816-5},
	id = {Lan2023},
	isbn = {1436-4646},
	journal = {Mathematical Programming},
	number = {1},
	pages = {1059--1106},
	title = {Policy mirror descent for reinforcement learning: linear convergence, new sampling complexity, and generalized problem classes},	volume = {198},
	year = {2023},
	bdsk-url-1 = {https://doi.org/10.1007/s10107-022-01816-5}}

@article{bauschke2019linear,
	abstract = {The gradient method is well known to globally converge linearly when the objective function is strongly convex and admits a Lipschitz continuous gradient. In many applications, both assumptions are often too stringent, precluding the use of gradient methods. In the early 1960s, after the amazing breakthrough of {\L}ojasiewicz on gradient inequalities, it was observed that uniform convexity assumptions could be relaxed and replaced by these inequalities. On the other hand, very recently, it has been shown that the Lipschitz gradient continuity can be lifted and replaced by a class of functions satisfying a non-Euclidean descent property expressed in terms of a Bregman distance. In this note, we combine these two ideas to introduce a class of non-Euclidean gradient-like inequalities, allowing to prove linear convergence of a Bregman gradient method for nonconvex minimization, even when neither strong convexity nor Lipschitz gradient continuity holds.},
	author = {Bauschke, Heinz H. and Bolte, J{\'e}r{\^o}me and Chen, Jiawei and Teboulle, Marc and Wang, Xianfu},
	date = {2019/09/01},
	date-added = {2025-12-31 11:03:36 -0800},
	date-modified = {2025-12-31 11:03:36 -0800},
	doi = {10.1007/s10957-019-01516-9},
	id = {Bauschke2019},
	isbn = {1573-2878},
	journal = {Journal of Optimization Theory and Applications},
	number = {3},
	pages = {1068--1087},
	title = {On Linear Convergence of Non-Euclidean Gradient Methods without Strong Convexity and Lipschitz Gradient Continuity},
	volume = {182},
	year = {2019},
	bdsk-url-1 = {https://doi.org/10.1007/s10957-019-01516-9}}

@article{mukkamala2020convex,
author = {Mukkamala, Mahesh Chandra and Ochs, Peter and Pock, Thomas and Sabach, Shoham},
title = {Convex-Concave Backtracking for Inertial Bregman Proximal Gradient Algorithms in Nonconvex Optimization},
journal = {SIAM Journal on Mathematics of Data Science},
volume = {2},
number = {3},
pages = {658-682},
year = {2020},
doi = {10.1137/19M1298007},
    abstract = { Backtracking line-search is an old yet powerful strategy for finding better step sizes to be used in proximal gradient algorithms. The main principle is to locally find a simple convex upper bound of the objective function, which in turn controls the step size that is used. In case of inertial proximal gradient algorithms, the situation becomes much more difficult and usually leads to very restrictive rules on the extrapolation parameter. In this paper, we show that the extrapolation parameter can be controlled by also locally finding a simple concave lower bound of the objective function. This gives rise to a double convex-concave backtracking procedure which allows for an adaptive choice of both the step size and extrapolation parameters. We apply this procedure to the class of inertial Bregman proximal gradient methods, and prove that any sequence generated by these algorithms converges globally to a critical point of the function at hand. Numerical experiments on a number of challenging nonconvex problems in image processing and machine learning were conducted and show the power of combining inertial step and double backtracking strategy in achieving improved performances. }
}

@article{hanzely2021fastest,
	abstract = {Relative smoothness---a notion introduced in Birnbaum et al. (Proceedings of the 12th ACM conference on electronic commerce, ACM, pp 127--136, 2011) and recently rediscovered in Bauschke et al. (Math Oper Res 330--348, 2016) and Lu et al. (Relatively-smooth convex optimization by first-order methods, and applications, arXiv:1610.05708, 2016)---generalizes the standard notion of smoothness typically used in the analysis of gradient type methods. In this work we are taking ideas from well studied field of stochastic convex optimization and using them in order to obtain faster algorithms for minimizing relatively smooth functions. We propose and analyze two new algorithms: Relative Randomized Coordinate Descent (relRCD) and Relative Stochastic Gradient Descent (relSGD), both generalizing famous algorithms in the standard smooth setting. The methods we propose can be in fact seen as particular instances of stochastic mirror descent algorithms, which has been usually analyzed under stronger assumptions: Lipschitzness of the objective and strong convexity of the reference function. As a consequence, one of the proposed methods, relRCD corresponds to the first stochastic variant of mirror descent algorithm with linear convergence rate.},
	author = {Hanzely, Filip and Richt{\'a}rik, Peter},
	date = {2021/07/01},
	date-added = {2025-12-31 11:20:30 -0800},
	date-modified = {2025-12-31 11:20:30 -0800},
	doi = {10.1007/s10589-021-00284-5},
	id = {Hanzely2021},
	isbn = {1573-2894},
	journal = {Computational Optimization and Applications},
	number = {3},
	pages = {717--766},
	title = {Fastest rates for stochastic mirror descent methods},
	volume = {79},
	year = {2021},
	bdsk-url-1 = {https://doi.org/10.1007/s10589-021-00284-5}}
\end{document}